\newtheorem{theorem}{Theorem}[section]
\newtheorem{lemma}[theorem]{Lemma}
\newtheorem{conjecture}[theorem]{Conjecture}
\newtheorem{proposition}[theorem]{Proposition}
\newtheorem{definition}[theorem]{Definition}
\newtheorem{rem}[theorem]{Remark}
\newcommand{\N}{\mathbb N}
\newcommand{\Z}{\mathbb Z}
\newcommand{\R}{\mathbb R}
\renewcommand{\P}{\mathbb P}
\newcommand{\dd}{\text{\upshape{d}}}
\newcommand{\relmiddle}[1]{\mathrel{}\middle#1\mathrel{}}
\newcommand{\cond}{\relmiddle{|}}
\renewcommand{\epsilon}{\varepsilon}
\newcommand{\Pois}{\text{Pois}}
\newcommand{\cadlag}{c\`{a}dl\`{a}g }
\DeclareMathOperator*{\osc}{osc}
\newcommand{\Pk}{{P_k}}
\newcommand{\Ek}{{E_k}}
\newcommand\ex{\operatorname{ex}}
\newcommand{\at}[1]{a_{\ex(#1)}^-}
\newcommand{\bt}[1]{b_{\ex(#1)}^-}
\newcommand{\gt}[1]{g_{\ex(#1)}^-}
\newcommand{\gtt}[1]{g_{\ex(#1)-1}^-}
\newcommand{\betat}[1]{\beta^-_{\ex(#1)}}
\newcommand{\alphat}[1]{\alpha^-_{\ex(#1)}}
\newcommand{\A}{{4}}
\newcommand{\B}{{16}}
\begin{document}

\title{Extremal regime for one-dimensional\\Mott variable-range hopping}
\author{David A. Croydon\footnote{\tiny{Research Institute for Mathematical Sciences, Kyoto University, Kyoto 606--8502, Japan; croydon@kurims.kyoto-u.ac.jp.}}, Ryoki Fukushima\footnote{\tiny{Institute of Mathematics, University of Tsukuba, 1-1-1 Tennodai, Tsukuba, Ibaraki 305--8571, Japan; ryoki@math.tsukuba.ac.jp.}} and Stefan Junk\footnote{\tiny{Advanced Institute for Materials Research, Tohoku University, 2-1-1 Katahira, Aoba-ku, Sendai, 980-8577 Japan; sjunk@tohoku.ac.jp.}}}
\maketitle

\begin{abstract}
We study the asymptotic behaviour of a version of the one-dimensional Mott random walk in a regime that exhibits severe blocking. We establish that, for any fixed time, the appropriately-rescaled Mott random walk is situated between two environment-measurable barriers, the locations of which are shown to have an extremal scaling limit. Moreover, we give an asymptotic description of the distribution of the Mott random walk between the barriers that contain it.\\
{\bf MSC:} 60K37 (primary), 60F17, 60G70, 60J27, 82A41, 82D30\\
{\bf Keywords:} random walk in random environment, disordered media, sub-diffusivity, Mott variable-range hopping, extremal process
\end{abstract}
%60K37 Processes in random environments
%60F17 Functional limit theorems; invariance principles
%60G70 Extreme value theory; extremal stochastic processes
%60J27 Continuous-time Markov processes on discrete state spaces
%82A41 Random walks, random surfaces, etc. (use only for secondary classification)
%82D30 Statistical mechanical studies of random media, disordered materials (including liquid crystals and spin glasses)

\section{Introduction}\label{sec:intro}

In one dimension, Mott variable-range hopping, which captures the dynamics of an electron in a disordered conduction medium in the Anderson localisation regime, is known to exhibit either diffusive or subdiffusive behaviour, depending on how the parameters of the model are chosen. (Historically, Mott variable-range hopping goes back to the study of electronic processes in non-crystalline materials \cite{Mott1, Mott2}, and the one-dimensional case in particular provides a model of such a process in a wire \cite{Alex, Lee, Serota, Star}.) In \cite{CF}, mathematically rigourous scaling limits for Mott variable-range hopping were obtained in the diffusive regime, as were criteria distinguishing this from the subdiffusive regime. Concerning the latter, anomalous polynomial scaling limits were derived in \cite{CFJ}, wherein the subdiffusivity in question was shown to arise from a `blocking' mechanism, with regions of low conductivity persisting in the limit. The present article extends this study of the subdiffusive regime to a parameter region where the blocking behaviour is even more severe, leading to scaling limits that are described in terms of the so-called extremal processes that arise naturally in the study of sums of independent and identically-distributed random variables whose distributions have slowly-varying tails. (For background on the classical work in the latter area, see \cite{Gnedenko, Lamperti, Res87}.)

We start by introducing the version of the Mott random walk that will be the focus of this article. Let $\omega=(\omega_i)_{i\in\mathbb{Z}}$ be the atoms of a homogeneous Poisson process on $\mathbb{R}$ with unit intensity, conditioned to have an atom at zero (i.e.\ sampled according to the relevant Palm distribution). We assume this is built on a probability space with probability measure $\mathbf{P}$. Moreover, for definiteness concerning the labelling, we will always suppose
\[\dots <\omega_{-2}< \omega_{-1}<\omega_0=0<\omega_1<\omega_2<\cdots.\]
Given a realisation of $\omega$, we define conductances $(c^{\alpha,\lambda}(x,y))_{x,y\in \omega}$ by setting $c^{\alpha,\lambda}(\omega_i,\omega_i)=0$ and
\begin{equation}\label{cdef}
c^{\alpha,\lambda}(\omega_i,\omega_j):=\exp\left(-|\omega_i-\omega_j|^\alpha+\lambda(\omega_i+\omega_j)\right),\qquad \forall i\neq j,
\end{equation}
where $\alpha>1$ and $\lambda\in\mathbb{R}$ are deterministic constants. The associated continuous-time random walk we consider, which will be denoted by $X=(X_t)_{t\geq 0}$, has generator characterised by
\begin{align}\label{generator}
(L^{\alpha,\lambda}f)(\omega_i)&:=\sum_{j\in\Z}\frac{c^{\alpha,\lambda}(\omega_i,\omega_j)}{c^{\alpha,\lambda}(\omega_i)}\left(f(\omega_j)-f(\omega_i)\right),
\end{align}
for bounded $f:\omega\rightarrow\mathbb{R}$, where \[c^{\alpha,\lambda}(\omega_i):=\sum_{j\in\Z}c^{\alpha,\lambda}(\omega_i,\omega_j).\] It is readily checked that, since $\alpha>1$, the random variable $c^{\alpha,\lambda}(\omega_i)$ is finite for every $i\in \mathbb{Z}$, $\mathbf{P}$-a.s. Note that this means the generator at \eqref{generator} is well-defined. Moreover, the Markov chain $X$ has unit mean exponential holding times, which implies it does not explode (i.e.\ remains in $\omega$ for all time), and has invariant measure $\mu$, with
\begin{align}\label{eq:def_invariant}
\mu(\{\omega_i\})=c^{\alpha,\lambda}(\omega_i).
\end{align}
We will typically call $X$ the Mott random walk, and write $P^{\alpha,\lambda}_\omega$ for the law of $X$ started from $0$, conditional on $\omega$; this is the so-called quenched law of $X$.
The corresponding annealed law is obtained by integrating out the randomness of the environment, i.e.\
\[\mathbb{P}^{\alpha,\lambda}:=\int P^{\alpha,\lambda}_\omega\left(\cdot\right)\mathbf{P}(\dd\omega).\]

Our first main result, Theorem~\ref{thm:mainnew1} (see also Remark~\ref{locrem}), establishes that the running supremum and infimum of $X$, i.e.\
\[\overline{X}_t:=\sup_{s\leq t}X_s,\qquad \underline{X}_t:=\inf_{s\leq t}X_s,\]
localize on environment-measurable processes. En route to proving this, we also obtain a similar conclusion for the exceedance times $(\Delta^+_x)_{x\geq0}$ and $(\Delta^-_x)_{x\geq 0}$, defined by setting
\begin{align*}
\Delta^+_{x}&:=\inf\{t\geq 0:\: X_t> x\},\\
\Delta^-_{x}&:=\inf\{t\geq 0:\: X_t<-x\}.
\end{align*}
We note that $(\Delta^+_x)_{x\geq0}$ and $(\Delta^-_{x})_{x\geq 0}$ are \cadlag functions, and represent the right-continuous inverses of $\overline{X}$ and $-\underline{X}$, respectively. As will be explained in more detail after we have given the statements of our results, the long-term behavior of $\overline X $ and $\underline X$ is essentially explained by the nearest-neighbour edge resistances,
\begin{align*}
r^{\alpha,\lambda}(\omega_j,\omega_{j+1}):= c^{\alpha,\lambda}(\omega_j,\omega_{j+1})^{-1}.
\end{align*}
Indeed, as alluded to at the beginning of the section, the key feature of the Mott random walk in the regime that we are studying is blocking, and, intuitively, the family $(r^{\alpha,0}(\omega_{j},\omega_{j+1}))_{j=0,...,k-1}$ describes the height of the barriers that $X$ has to overcome to reach $\omega_k$. (Note that the drift parameter $\lambda$ is not particularly important in determining barrier heights.) The family $(r^{\alpha,0}(\omega_{j},\omega_{j+1}))_{j\in\mathbb{Z}}$ is independent and identically distributed (i.i.d.), with
\begin{align}\label{eq:slow_varying_tail}
\mathbf{P}\left(r^{\alpha,0}(\omega_1,\omega_0)>u\right)=\frac1{L(u)},\qquad \forall u\geq 1,
\end{align}
where
\begin{align}\label{ldef}
L(u)=e^{\log^{1/\alpha}(u)}
\end{align}
is slowly varying as $u\rightarrow\infty$, and has inverse given by
\begin{equation}
\label{eq:inverse}
L^{-1}(u)=e^{\log^{\alpha}(u)}.
\end{equation}
(Note that $L$ and $L^{-1}$ are only defined as above when $u\geq 1$. We extend $L$ to a continuous, strictly increasing function on $[0,\infty)$ by setting $L(u)=u$ for $u\in [0,1)$. The inverse $L^{-1}$ function is then extended in the same way.) Based on the above observation, and in particular using the function $L$ to capture the appropriate scaling, we define environment-measurable processes $(m_{n,+}(x))_{x\geq0}$ and $(m_{n,-}(x))_{x\geq0}$ by setting:
\begin{align}
m_{n,+}(x)&:=\max_{0 \le \omega_j \le xn} \frac{1}{n}L(r^{\alpha,0}(\omega_j,\omega_{j+1})),\label{mnp}\\
m_{n,-}(x)&:=\max_{-xn\leq \omega_j \le0} \frac{1}{n}L(r^{\alpha,0}(\omega_{j-1},\omega_{j})).\label{mnn}
\end{align}
The right-continuous inverses are given by
\begin{eqnarray}
m^{-1}_{n,+}(t)&:=&\inf\{x\geq 0:\:m_{n,+}(x)>t\},\label{mnpinv}\\
m^{-1}_{n,-}(t)&:=&-\inf\{x\geq 0:\:m_{n,-}(x)>t\},\label{mnninv}
\end{eqnarray}
respectively. We are now ready to state our initial conclusion, which starts to provide a picture of the behaviour of the one-dimensional Mott random walk in the extremal, weak drift regime. The subsequent result, Theorem~\ref{thm:mainnew2}, will demonstrate that the environment-measurable processes $(m_{n,+}(x))_{x\geq0}$ and $(m_{n,-}(x))_{x\geq0}$ (and their right-continuous inverses) have explicit, non-trivial distributional limits as $n\rightarrow\infty$. Note that we use $d_U$ to metrise the topology of uniform convergence on compacts, $d_{J_1}$ to metrise the local Skorohod $J_1$ topology and $d_{M_1}$ to metrise the local Skorohod $M_1$ topology; further details are given in Appendix~\ref{metricapp}. Moreover, see Figure~\ref{fig:simu} for output of a simulation that illustrates the result.

\begin{theorem}
\label{thm:mainnew1}
Fix $\alpha>1$ and $\lambda\in\mathbb{R}$.\\
(a) As $n\rightarrow\infty$,
\[d_{U}\left(\left(n^{-1}L\left(n^{-1}\Delta^-_{nx}\right),n^{-1}L\left(n^{-1}\Delta^+_{nx}\right)\right)_{x\geq 0},\left(m_{n,-}(x),m_{n,+}(x)\right)_{x\geq 0}\right)\rightarrow 0\]
in $\mathbb{P}^{\alpha,\lambda/n}$-probability.\\
(b) As $n\rightarrow\infty$,
\[d_{M_1}\left(\left(n^{-1}\underline{X}_{nL^{-1}(n t)},n^{-1}\overline{X}_{nL^{-1}(n t)}\right)_{t\geq 0},\left(m_{n,-}^{-1}(t),m_{n,+}^{-1}(t)\right)_{t\geq 0}\right)\rightarrow 0\]
in $\mathbb{P}^{\alpha,\lambda/n}$-probability.
\end{theorem}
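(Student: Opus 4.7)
The guiding idea is that, in this extremal regime, $\Delta^+_{nx}$ is essentially governed by the single highest barrier among the nearest-neighbour edges in $[0,xn]$, namely the edge with maximal resistance $r^{\alpha,\lambda/n}(\omega_j,\omega_{j+1})$. Since $L$ is slowly varying, multiplicative errors of order $n^{O(1)}$ in the argument of $L$ disappear after dividing by $n$, so only the crude logarithmic scale of $\Delta^+_{nx}$ matters; the weak drift $\lambda/n$ alters conductances on $[0,xn]$ by at most an $O(1)$ factor, so it is absorbed in the same way. The plan is to establish (a) first and then derive (b) via an inverse argument.

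For part (a) I would focus on $\Delta^+$, the $\Delta^-$ case being symmetric. Let $k=k(x)$ be the smallest index with $\omega_k>xn$. Since $\Delta^+_{nx}$ is sandwiched between the hitting times $T_{\omega_{k-1}}$ and $T_{\omega_k}$, the target reduces to showing
\[
\max_{0\le\omega_j\le xn}r^{\alpha,\lambda/n}(\omega_j,\omega_{j+1})\cdot n^{-o_P(1)}\le T_{\omega_k}\le\max_{0\le\omega_j\le xn}r^{\alpha,\lambda/n}(\omega_j,\omega_{j+1})\cdot n^{1+o_P(1)}
\]
uniformly in $x$ on compact intervals. For the upper bound I would use standard electrical-network estimates expressing $E^{\omega_0}_\omega[T_{\omega_k}]$ in terms of an effective resistance (bounded via the series law by $\sum_{j<k}r^{\alpha,\lambda/n}(\omega_j,\omega_{j+1})$) times the invariant measure on $\{\omega_0,\ldots,\omega_k\}$ (of polynomial-in-$n$ order); the slowly-varying tail \eqref{eq:slow_varying_tail} then forces the sum of i.i.d.\ resistances to be dominated by their maximum, again up to a polynomial-in-$n$ factor. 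Exponential concentration of hitting times in reversible Markov chains (with rate comparable to $1/E[T]$) upgrades the expected-time bound to a bound in $P^{\alpha,\lambda/n}_\omega$-probability. For the lower bound, Nash--Williams gives $R_{\mathrm{eff}}(\omega_0,\omega_k)\ge\max_j r^{\alpha,\lambda/n}(\omega_j,\omega_{j+1})$, and the standard Dirichlet-form/variational lower bound for crossing times, combined with the analogous concentration argument, completes the sandwich. Applying $n^{-1}L(\cdot)$ to both sides and invoking the uniform slow-variation identity $L(tu)/L(u)\to 1$ (for $t$ in a compact subset of $(0,\infty)$) yields $n^{-1}L(n^{-1}\Delta^+_{nx})-m_{n,+}(x)\to 0$ uniformly in $x$ on compacts.

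Part (b) then follows from (a) by inversion: $\overline X$ and $-\underline X$ are the right-continuous inverses of $\Delta^+$ and $\Delta^-$, and the inverse map takes non-decreasing \cadlag functions from $d_U$ to $d_{M_1}$ continuously (cf.\ Appendix~\ref{metricapp}); rescaling time via $t\mapsto L^{-1}(nt)/n$ then translates (a) directly into (b). The main technical obstacle will be the upper bound: one needs a sharp enough comparison of $E^{\omega_0}_\omega[T_{\omega_k}]$ to $\max_j r\cdot n^{1+o_P(1)}$ uniformly in $x$, together with hitting-time concentration on a scale negligible after applying $L/n$ (essentially fluctuations smaller than $e^{o((\log n)^{1/\alpha})}$). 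The Nash--Williams lower bound and the handling of the weak drift $\lambda/n$ (which contributes only $e^{O(\lambda x)}$ multiplicative errors to each resistance, harmless after $L$) are comparatively routine.
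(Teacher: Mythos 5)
Your overall scheme — electrical-network bounds for the exceedance times, slow variation of $L$ absorbing polynomial-in-$n$ errors, and inversion via Whitt's continuity theorem to get (b) from (a) — is the same as the paper's. However, there are several genuine gaps where you treat as routine what is in fact the bulk of the work.

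First, your sandwich is phrased in terms of $R_{\mathrm{eff}}(\omega_0,\omega_k)\geq \max_j r(\omega_j,\omega_{j+1})$, which you attribute to Nash--Williams. That inequality is immediate for the \emph{nearest-neighbour subnetwork}, but the Mott walk has conductances on \emph{every} pair $(\omega_i,\omega_j)$, so the cutset at a gap has conductance $\sum_{l\leq j,\,m>j}c^{\alpha,\lambda/n}(\omega_l,\omega_m)$, which a priori could dwarf $c(\omega_j,\omega_{j+1})$ and destroy the bound. The key step (which the paper spells out in Lemma~\ref{lem:R_bounds}, using $\alpha>1$ and the triangle inequality for $|\cdot|^\alpha$ to factor through $\omega_{\bt{k}}$ and $\omega_{\at{k}}$) is to show the cutset conductance is still $\lesssim (g_k^+)^{-1}$ up to powers of $\log n$. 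Without this, the lower bound on $\Delta^+_{nx}$ simply doesn't follow. This is precisely the ``one technical complication'' the paper flags, and your proposal skips it.

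Second, the lower bound on the crossing time (your ``Dirichlet-form/variational lower bound for crossing times, combined with the analogous concentration argument'') is not actually available in the form you want. Markov's inequality on $E[T]$ controls large deviations of $T$ \emph{above} its mean, not below; hitting times of reversible chains do not concentrate from below in general, and the walk could in principle cross a large resistance gap anomalously quickly. The paper instead counts excursions: it shows that, with probability $e^{-o(1)}$, the walk makes a near-geometric number (of order $g_k^+/(g_{k-1}^+ + \gtt{k})$ up to polylogs) of excursions between $\omega_{\at{k}}$ and $\omega_{a_k^+}$ before crossing, each of duration at least $(g_{k-1}^++\gtt{k})n/\ell_1(n)^5$ with probability $\geq 1/\ell_1(n)^8$, and then applies Chebyshev to the resulting binomial (Lemmas~\ref{lem:geom}, \ref{lem:exc}, eq.~\eqref{eq:SonA}). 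A generic spectral/Dirichlet argument does not produce this.

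Third, you work directly on the infinite line, but $E_\omega[T_{\omega_k}]$ is a priori infinite (or at least uncontrolled) without some argument that the walk does not wander arbitrarily far left. The paper introduces an auxiliary finite graph $\mathcal{G}_k^+$ obtained by gluing all sites beyond the first sufficiently large left barrier (and beyond $\omega_{b_k^+}$) into two boundary vertices, runs the commute-time identity there, and then shows separately (Lemma~\ref{lem:l1}, via a resistance ratio bound) that the walk exits to the right before the left with high probability. This step is where the record structure on the negative axis (the index $\ex(k)$) enters, and it is not something your sketch accounts for.

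Finally, two smaller points. Your claimed exponential concentration $P(T>t)\lesssim e^{-ct/E[T]}$ for the upper bound is not automatic for a non-nearest-neighbour reversible chain; Markov's inequality combined with the fact that successive records are separated by the factor $e^{\log^{\alpha-1}n/\ell_2(n)}$ of \eqref{eq:separation} is what the paper actually uses, and is sufficient. And for part (b), to invoke Whitt's inverse-map continuity you also need to verify that the processes are a.s.\ unbounded and that $m_\pm^{-1}(0)=0$, which in the paper requires the distributional limit of Theorem~\ref{thm:mainnew2}(a) rather than the deterministic comparison alone.
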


\begin{rem}\label{locrem}
The arguments we give below will further establish that, for each fixed $t>0$, we have localisation of the rescaled versions of $\underline{X}$ and $\overline{X}$ in the sense that
\[\mathbb{P}^{\alpha,\lambda/n}\left(\left(n^{-1}\underline{X}_{nL^{-1}(n t)},n^{-1}\overline{X}_{nL^{-1}(n t)}\right)=\left(m_{n,-}^{-1}(t),m_{n,+}^{-1}(t)\right)\right)\rightarrow 1.\]
See Remark~\ref{locremproof} for a sketch of the proof. This is similar to the one-site localisation in probability for the Bouchaud trap model with slowly-varying traps of \cite[Theorem 1.1]{CM2}. (We recall that, conditional on a trapping environment $(\tau_i)_{i\in\mathbb{Z}}$, which is an i.i.d.\ sequence of $(0,\infty)$-valued random variables, the (symmetric) Bouchaud trap model on $\mathbb{Z}$ is the continuous time nearest-neighbour Markov chain on $\mathbb{Z}$ with jump rates from $i$ to $i\pm1$ given by $\frac{1}{2\tau_i}$. To say that the Bouchaud trap model has slowly-varying traps means that $\mathbf{P}(\tau_i>x)$ is slowly varying as $x\rightarrow\infty$.)
\end{rem}

\begin{figure}
\begin{center}
\begin{tikzpicture}
\node[draw,anchor=south west,inner sep=.5] at (0,0) {\includegraphics[width=.45\textwidth,height=.38\textheight]{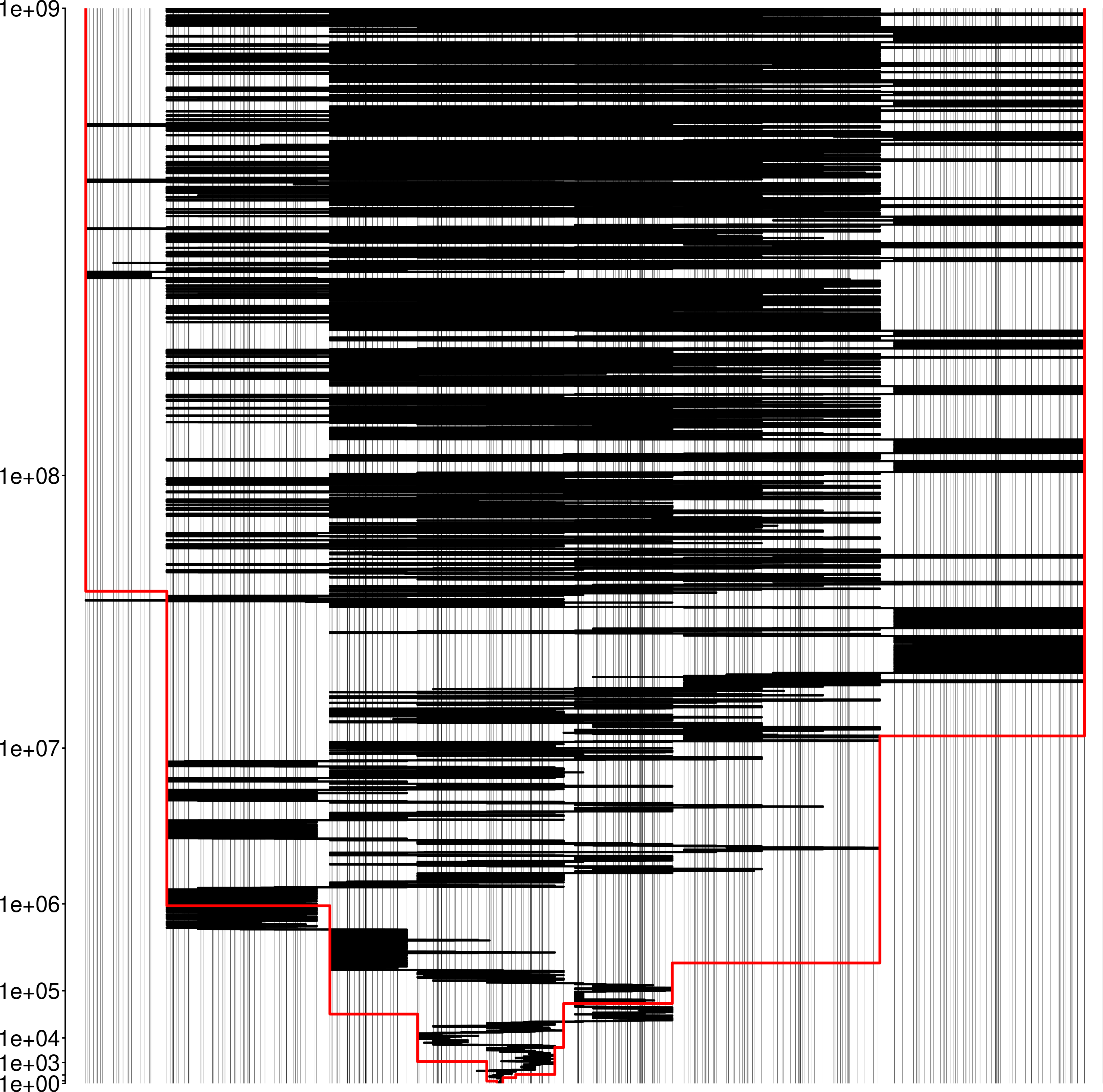}};

\node[draw,anchor=south west,inner sep=.5] at (7.6,0) {\includegraphics[width=.45\textwidth,height=.38\textheight]{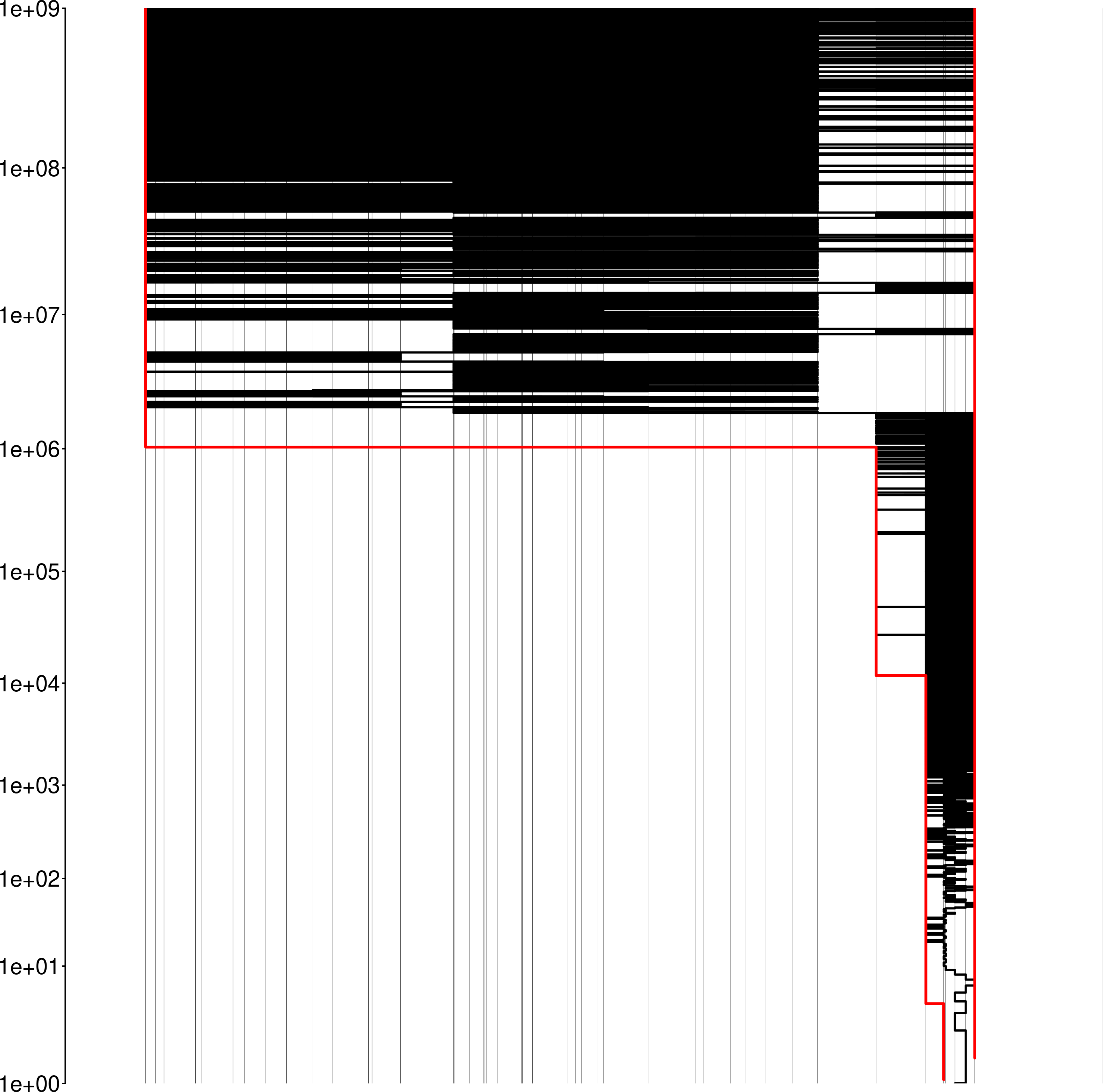}};

\node[draw,anchor=south west,inner sep=.5] at (3.8,-9.3) {\includegraphics[width=.45\textwidth,height=.38\textheight]{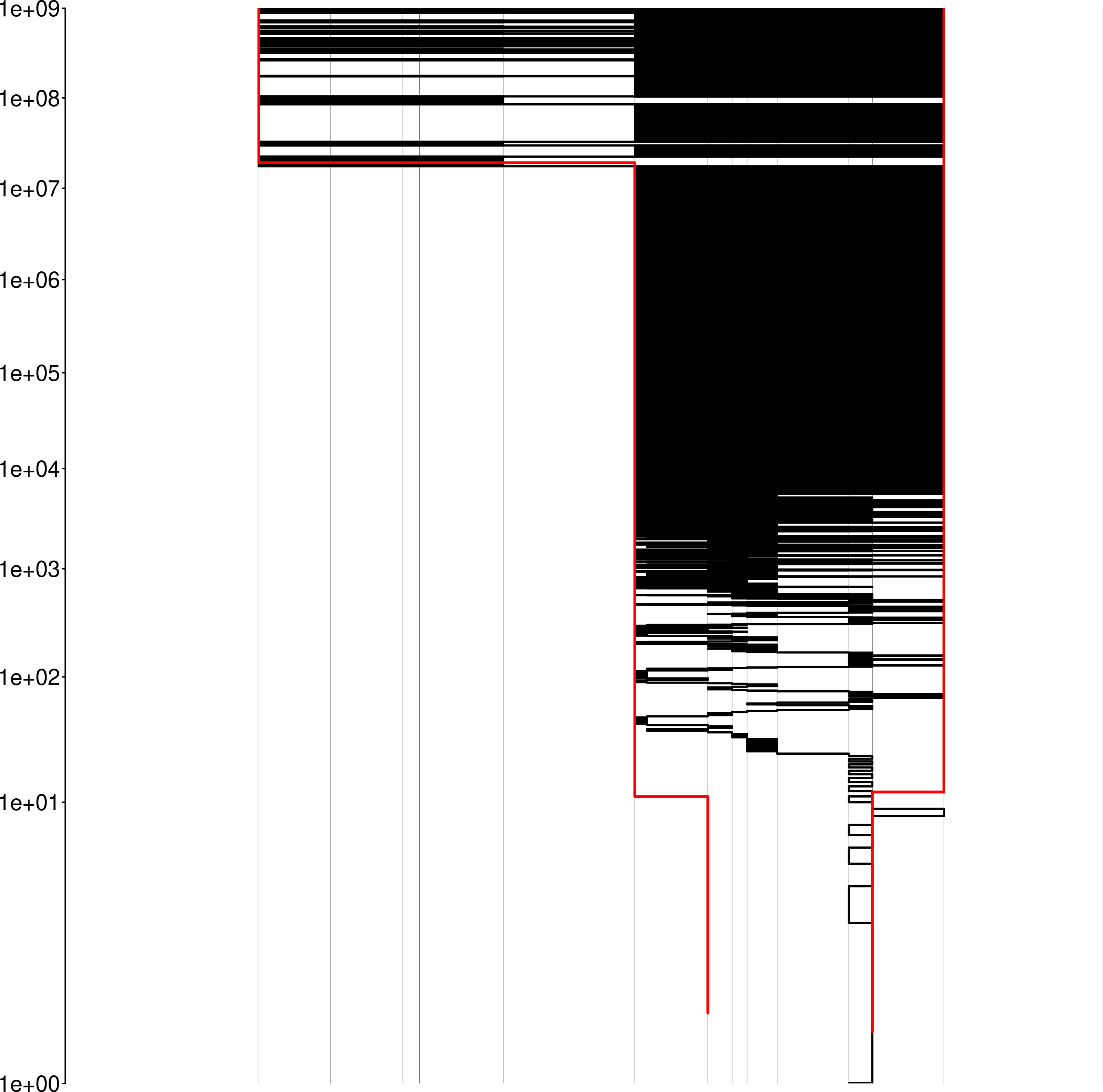}};
\end{tikzpicture}
\end{center}
\caption{Simulation of the Mott random walk $X$ for $10^9$ steps with $\alpha=1.5$ (top left), $\alpha=2.5$ (top right) and $\alpha=3.5$ (bottom), and, in each case, $\lambda=0$. Space runs along the horizontal axis, and time runs upwards, with the vertical axes being rescaled as $t\mapsto L^{-1}(t)$ (note that $L$ depends on $\alpha$). The vertical lines indicate the atoms of $\omega$, and the red curves indicate the $\omega$-measurable processes $m_{n,+}$ and $m_{n,-}$, which describe the space-time region that the Mott random walk is highly likely to both be contained within and fully explore.
\label{fig:simu}}
\end{figure}

To characterise the limits of the processes $(m_{n,+}(x))_{x\geq0}$ and $(m_{n,-}(x))_{x\geq0}$, we introduce a Poisson point process on $\mathbb{R}\times (0,\infty)$ of intensity $v^{-2}\dd x\dd v$, the set of atoms of which will be written $\{(x_i,v_i):i\in\mathbb{Z}\}$. We again suppose this is built on an underlying probability space with probability measure $\mathbf{P}$. We define two associated \cadlag processes by setting, for $x\geq 0$,
\begin{eqnarray}
m_+(x)&:=&\sup\{v_i:\:0\leq x_i\leq x\},\label{mplus}\\
m_-(x)&:=&\sup\{v_i:\:-x\leq x_i\leq 0\}.\label{mminus}
\end{eqnarray}
These are examples of extremal processes, which naturally arise in the study of random variables with slowly-varying tails, see \cite{Kasa} and \cite[Chapter 4]{Res87}, for example. The right-continuous inverses of $m_+$ and $m_-$, which will be denoted $m_+^{-1}$ and $m_-^{-1}$, are defined similarly to \eqref{mnpinv} and \eqref{mnninv}, respectively. Our second main result follows from classical results on extremal processes, and, in conjunction with the previous theorem, give scaling limits for the running supremum and infimum of $X$ (as well as the exceedance times) under the annealed law $\mathbb{P}^{\alpha,\lambda}$.

\begin{theorem}
\label{thm:mainnew2}
Fix $\alpha>1$ and $\lambda\in\mathbb{R}$.\\
(a) As $n\rightarrow\infty$,
\[\mathbf{P}\left(\left(m_{n,-}(x),m_{n,+}(x)\right)_{x\geq 0}\in\cdot\right)\rightarrow\mathbf{P}\left(\left(m_-(x),m_+(x)\right)_{x\geq 0}\in\cdot\right)\]
weakly as probability measures on $D([0,\infty),\mathbb{R}^2)$ with respect to the topology induced by $d_{J_1}$.\\
(b) As $n\rightarrow\infty$,
\[\mathbf{P}\left(\left(m_{n,-}^{-1}(t),m_{n,+}^{-1}(t)\right)_{t\geq 0}\in\cdot\right)\rightarrow\mathbf{P}\left(\left(m_-^{-1}(t),m_+^{-1}(t)\right)_{t\geq 0}\in\cdot\right)\]
weakly as probability measures on $D([0,\infty),\mathbb{R}^2)$ with respect to the topology induced by $d_{M_1}$.
\end{theorem}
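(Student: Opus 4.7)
The plan is to deduce both parts from classical results on extremal point processes associated with i.i.d.\ sequences with Pareto tails. Observe first that, by \eqref{eq:slow_varying_tail} together with the definition of $L$, the random variables
\[
Y_j^+ := L\bigl(r^{\alpha,0}(\omega_j,\omega_{j+1})\bigr),\qquad Y_j^- := L\bigl(r^{\alpha,0}(\omega_{j-1},\omega_j)\bigr),
\]
satisfy $\mathbf{P}(Y_j^{\pm}>u)=1/u$ for all $u\geq 1$. Moreover, under the Palm law $\mathbf{P}$, each of the families $(Y_j^+)_{j\geq 0}$ and $(Y_j^-)_{j\leq 0}$ is i.i.d., and the two families are mutually independent, since the gaps of the Poisson process to the right of $\omega_0=0$ are independent of those to its left.

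For part (a), the strategy is to first establish joint convergence of the marked point processes
\[
\Pi_n^+ := \sum_{j\geq 0}\delta_{(\omega_j/n,\, Y_j^+/n)},\qquad \Pi_n^- := \sum_{j\leq 0}\delta_{(-\omega_j/n,\, Y_j^-/n)},
\]
viewed as Radon point measures on $[0,\infty)\times(0,\infty)$, to a pair of independent Poisson point processes of intensity $\dd x\cdot v^{-2}\dd v$. My plan is to replace $\omega_j/n$ by $j/n$, a substitution justified in the vague topology by the strong law of large numbers for the exponential gaps (yielding $\sup_{|j|\leq Cn}|\omega_j-j|=o(n)$ a.s.\ for every $C>0$), and then invoke the standard convergence theorem for the point process of an i.i.d.\ Pareto sequence (see, e.g., \cite[Chapter 4]{Res87}) to obtain the limit of $\sum_{j\geq 0}\delta_{(j/n,\,Y_j^+/n)}$, and similarly on the negative side; independence of the two limiting PPPs is inherited from the independence of the two sides in $\mathbf{P}$. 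Finally, $(m_{n,-},m_{n,+})$ is recovered as the image of $(\Pi_n^-,\Pi_n^+)$ under the running-supremum functional $\Pi\mapsto(\sup\{v:(x',v)\in\Pi,\, 0\leq x'\leq x\})_{x\geq 0}$, which is continuous (from the vague topology on Radon measures to $d_{J_1}$) at point measures whose atoms have pairwise-distinct $x$- and $v$-coordinates---an almost-sure property of the limit. An application of the continuous mapping theorem then yields (a).

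For part (b), since $m_{n,\pm}$ and $m_{\pm}$ are non-decreasing, the $J_1$ convergence from (a) is in particular $M_1$ convergence, and the right-continuous inverse is continuous on the non-decreasing \cadlag functions with respect to $d_{M_1}$ (it corresponds to exchanging the two axes of the completed graph). A further application of the continuous mapping theorem then gives the desired $M_1$-convergence. I expect the main technical obstacle to be the careful verification of continuity of the running-supremum and right-continuous inverse functionals at the (random) limit point, together with a rigorous justification of the replacement $\omega_j/n\leftrightarrow j/n$ in the first step; once these technicalities are addressed, the theorem is a direct consequence of classical extreme-value theory.
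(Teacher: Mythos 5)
Your proposal is correct and follows essentially the same route as the paper: part (a) is, as you say, a direct application of the classical convergence result for extremal processes of i.i.d.\ sequences with a $1/u$ tail (cf.\ \cite[Proposition 4.20]{Res87}), and part (b) follows from the $M_1$-continuity of the right-continuous-inverse map at unbounded limits with $f^{-1}(0)=0$ (cf.\ \cite[Theorem 13.6.1]{Whitt}), exactly as the paper does. One technical point to flag when you make the running-supremum continuity argument precise: the vague topology should be taken on $\R\times(0,\infty]$ with $(0,\infty)$ compactified at $+\infty$ (as the paper does in Subsection~\ref{21}), since on $\R\times(0,\infty)$ without that compactification atoms can escape to infinity and the supremum functional fails to be continuous.
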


In our third main result, we give the asymptotic finite-dimensional distributions of the Mott random walk $X$ under scaling. In particular, Theorem~\ref{thm:mainnew1} and Remark~\ref{locrem} already give that (asymptotically) $n^{-1}{X}_{nL^{-1}(nt)}$ sits between the sites $m_{n,-}^{-1}(t)$ and $m_{n,+}^{-1}(t)$, and the next theorem shows that its density is proportional to $e^{2\lambda x}$ on this interval. Moreover, as the proof strategy will explain, the process mixes suitably quickly that, conditional on the environment, the locations of $n^{-1}{X}_{nL^{-1}(nt)}$ are asymptotically independent for distinct values of $t$. Note that we write $E^{\alpha,\lambda/n}_\omega$ for the expectation under $P^{\alpha,\lambda/n}_\omega$.

\begin{theorem}
\label{thm:mainnew3}
(a) For any collection of times satisfying $0<t_1<t_2<\cdots<t_k$ and continuous bounded functions $f_1,\dots,f_k$, as $n\rightarrow\infty$,
\[\left|E^{\alpha,\lambda/n}_\omega\left(\prod_{i=1}^kf_i\left(n^{-1}{X}_{nL^{-1}(nt_i)}\right)\right)-\prod_{i=1}^k\frac{\int_{m_{n,-}^{-1}(t_i)}^{m_{n,+}^{-1}(t_i)}e^{2\lambda x}f_i(x)dx}{\int_{m_{n,-}^{-1}(t_i)}^{m_{n,+}^{-1}(t_i)}e^{2\lambda x}dx}\right|
\rightarrow0\]
in $\mathbf{P}$-probability.\\
(b) For any collection of times satisfying $0<t_1<t_2<\cdots<t_k$, as $n\rightarrow\infty$,
\[\mathbb{P}^{\alpha,\lambda/n}\left(\left(n^{-1}{X}_{nL^{-1}(nt_i)}\right)_{i=1}^k\in \cdot\right)\rightarrow \mathbf{P}\left(\left(U^\lambda_{t_i}\right)_{i=1}^k\in\cdot\right)\]
where, conditional on $(m_-,m_+)$, $(U^\lambda_{t_i})_{i=1}^k$ is an independent collection of random variables, with $U^\lambda_{t_i}$ having density proportional to $e^{2\lambda x}$ on $(m_-^{-1}(t_i),m_+^{-1}(t_i))$ (and zero elsewhere). In particular, in the case $\lambda=0$, the limiting random variables are simply uniform on the relevant interval.
\end{theorem}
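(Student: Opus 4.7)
My plan is to prove part (a) and then derive part (b) by combining it with Theorem~\ref{thm:mainnew2}. For part (a), set $T_i:=nL^{-1}(nt_i)$ and $I_{t_i}:=[m_{n,-}^{-1}(t_i),m_{n,+}^{-1}(t_i)]$. Theorem~\ref{thm:mainnew1}(b) together with Remark~\ref{locrem} guarantees that, with high $\mathbb P^{\alpha,\lambda/n}$-probability, $X$ has explored exactly $I_{t_i}$ by time $T_i$. The main steps are then: (i) show that the quenched law $P^{\alpha,\lambda/n}_\omega(X_{T_i}\in\cdot)$ is close to the restricted invariant measure $\pi^{(n,t_i)}:=c^{\alpha,\lambda/n}(\cdot)\mathbf 1_{I_{t_i}}/\sum_{\omega_j\in I_{t_i}}c^{\alpha,\lambda/n}(\omega_j)$; (ii) identify the limit of $\pi^{(n,t_i)}$ pushed forward under $\omega_j\mapsto \omega_j/n$ as the density $\propto e^{2\lambda x}$ on $[m_{n,-}^{-1}(t_i),m_{n,+}^{-1}(t_i)]$; and (iii) pass to the multi-time statement via the strong Markov property.

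The easier step (ii) goes as follows. Writing $c^{\alpha,\lambda/n}(\omega_j)=e^{2\lambda\omega_j/n}\,\tilde c^{(n)}(\omega_j)$ with $\tilde c^{(n)}(\omega_j):=\sum_k\exp(-|\omega_j-\omega_k|^\alpha-(\lambda/n)(\omega_j-\omega_k))$, the factor $\tilde c^{(n)}$ is a translation-stationary, almost-local functional of the Palm Poisson process with finite mean $\bar c:=\mathbf E[c^{\alpha,0}(\omega_0)]$, the finiteness being a consequence of $\alpha>1$. The ergodic theorem for the Palm Poisson process then yields, uniformly in compact intervals $[a,b]$ and for continuous bounded $g$,
\[
\frac 1n\sum_{\omega_j\in[na,nb]}c^{\alpha,\lambda/n}(\omega_j)\,g(\omega_j/n)\longrightarrow \bar c\int_a^b e^{2\lambda x}g(x)\,\dd x
\]
in $\mathbf P$-probability. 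Taking the ratio with the same expression for $g\equiv 1$ gives the desired identification, the random endpoints $m_{n,\pm}^{-1}(t_i)$ being handled by tightness and joint continuity of the limit in the endpoints.

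The hard step is (i), the quenched mixing. The commute time of $X$ across $I_{t_i}$ is already of order $T_i$, since the sum of internal edge-resistances in $I_{t_i}$ is i.i.d.\ with the slowly-varying tail~\eqref{eq:slow_varying_tail} and so dominated by the largest, of order $L^{-1}(nt_i)$, while the total invariant mass of $I_{t_i}$ is of order $n$. To show that the $L^{2}(\pi^{(n,t_i)})$-distance between $P^{\alpha,\lambda/n}_\omega(X_{T_i}\in\cdot)$ and $\pi^{(n,t_i)}$ vanishes in $\mathbf P$-probability, I would exploit the extremal Poisson structure from Theorem~\ref{thm:mainnew2}, which separates (on the $L$-scale) the extremal barrier resistance from the second-largest internal resistance, combined with a Hardy-type lower bound for the one-dimensional spectral gap on $I_{t_i}$, so that the relaxation time is of strictly smaller order than $T_i$ with high probability. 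For step (iii), the strong Markov property at $T_i$ together with the observation that $T_{i+1}/T_i=\exp(\log^\alpha(nt_{i+1})-\log^\alpha(nt_i))$ grows super-polynomially in $n$ (because $\alpha>1$) gives ample room for $X$ to re-equilibrate within $I_{t_{i+1}}\supset I_{t_i}$ between consecutive observation times, producing the claimed product form.

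Part (b) then follows by combining part (a) with Theorem~\ref{thm:mainnew2}(b): the map $((a_i,b_i))_{i=1}^k\mapsto\prod_i\int_{a_i}^{b_i}e^{2\lambda x}f_i(x)\,\dd x/\int_{a_i}^{b_i}e^{2\lambda x}\,\dd x$ is bounded and continuous on $\{a_i<b_i\}$, the joint convergence in distribution of $(m_{n,\pm}^{-1}(t_i))_{i=1}^k$ (for fixed $t_i>0$, which are continuity points of the limit process a.s.) passes through, and bounded convergence allows the quenched estimate from (a) to be integrated against $\omega$ to give the annealed convergence to the law of the conditionally independent $(U^\lambda_{t_i})_{i=1}^k$ as described.
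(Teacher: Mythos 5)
Your overall architecture matches the paper's: localise the walk in $I_{t_i}=[m_{n,-}^{-1}(t_i),m_{n,+}^{-1}(t_i)]$ (barrier-crossing lower bound), show the walk mixes on $I_{t_i}$ to the restricted invariant distribution well before time $T_i=nL^{-1}(nt_i)$, identify the limit of the restricted invariant distribution via an ergodic-theorem argument (this is Proposition~\ref{prop:inv_meas}/Lemma~\ref{lem:quantitativeLLN} in the paper), then iterate the Markov property for the multi-time statement and integrate against the environment together with Theorem~\ref{thm:mainnew2}(b) for part~(b). So the proposal is in substance the paper's proof.

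The one genuine technical variation is your choice of tool for the quenched mixing bound. You propose a Hardy/Muckenhoupt-type lower bound on the one-dimensional spectral gap. The paper instead proves (Appendix~\ref{app:mixingt}) the clean inequality $\lambda_2(G)\geq 1/(\mu(G)\,\mathrm{diam}_{R_{\mathrm{eff}}}(G))$ for an arbitrary finite weighted graph, so that the relaxation time is bounded by $(\text{mass})\times(\text{effective-resistance diameter})$, and both factors are controlled by the resistance estimates in Lemma~\ref{lem:R_bounds} and the mass estimate \eqref{eq:mass_control}. Both routes work, but the paper's bound is arguably more convenient here because it applies directly to the long-range network and to the trace chain $\tilde X^k$, whereas a Hardy bound is naturally a nearest-neighbour statement and you would have to insert a Rayleigh-monotonicity comparison.

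Two points deserve tightening. First, the sentence ``the commute time of $X$ across $I_{t_i}$ is already of order $T_i$, since the sum of internal edge-resistances \dots is dominated by the largest, of order $L^{-1}(nt_i)$'' is misleading as written. The \emph{internal} resistance diameter of $I_{t_i}$ is governed by $g_{k-1}^+ + \gtt{k}\le L^{-1}(n(t_i-\eta))$, which by the separation event \eqref{eq:separation} is superpolynomially smaller than $g_k^+\approx L^{-1}(nt_i)$; it is precisely this gap on the $L$-scale that makes the internal relaxation time $\ll T_i$. The resistance of order $L^{-1}(nt_i)$ belongs to the \emph{boundary} barrier edge, which is what keeps the walk confined to $I_{t_i}$ up to time $T_i$. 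You in fact state the correct conclusion in the next sentence, so this is a phrasing slip rather than a gap, but it conflates the two scales whose separation is the crux of the argument. Second, appealing to Remark~\ref{locrem} for confinement is a little circular in spirit: in the paper that remark is derived as a by-product of the same proof. The cleaner move, which the paper takes, is to couple $X$ with the trace chain $\tilde X^k$ on $\tilde V_k=\{\omega_{\at{k}},\dots,\omega_{a_k^+}\}$ and control the decoupling error directly by the barrier-exit lower bound \eqref{eq:not_too_slow} (via $S_{G-1}$), so that the mixing bound can be applied to $\tilde X^k$, whose state space and invariant measure are exactly $I_{t_i}$ and $\pi^{(n,t_i)}$. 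For part~(b) your argument is the paper's.
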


Let us now briefly outline our proof strategy for the above results. As in \cite{CFJ}, the argument used to prove Theorem~\ref{thm:mainnew1} can, at least heuristically, be understood in terms of the scaling of the resistance in the electrical network associated with $(c^{\alpha,\lambda}(x,y))_{x,y\in \omega}$. Towards understanding this, first consider the zero drift ($\lambda=0$) case and recall that the nearest-neighbour edge resistances $r^{\alpha,0}(\omega_i,\omega_{i+1})=c^{\alpha,0}(\omega_i,\omega_{i+1})^{-1}$ have a distributional tail $\mathbf{P}(r^{\alpha,0}(\omega_i,\omega_{i+1})\geq u)=1/L(u)$, where the slowly-varying function $L$ is defined in \eqref{ldef}. Now, if we disregard the non-nearest-neighbour edges, then the effective resistance between $\omega_0$ and $\omega_n$ becomes simply a sum of independent and identically-distributed random variables:
\[R^{\alpha,0}_{\text{\upshape{nn}}}(\omega_0,\omega_n)=\sum_{i=0}^{n-1}r^{\alpha,0}(\omega_{i},\omega_{i+1}).\]
We thus readily deduce from the classical result of \cite[Theorem 2.1]{Kasa} that, as $n\to\infty$,
\begin{equation}\label{rscal}
\left(n^{-1}L\left(R^{\alpha,0}_{\text{\upshape{nn}}}(\omega_0,\omega_{\lfloor nx\rfloor })\right)\right)_{x\geq 0}\rightarrow \left(m_+(x)\right)_{x\geq 0}
\end{equation}
in distribution in the space $D([0,\infty),\mathbb{R})$ with respect to the Skorohod $J_1$ topology, and similarly for resistances along the negative axis. To get from \eqref{rscal} to Theorem~\ref{thm:mainnew1} requires a little more work, as we will next describe.

Firstly, as might be understood from the commute time identity (see, for instance, \cite[Theorem 4.27]{Barlow} for a proof in the context of continuous time random walks), the scaling of the exceedance times $\Delta^\pm$ comes not only from the resistance scaling, but also from the scaling of the invariant measure $\mu$, see \eqref{eq:def_invariant}. Since $\mu(\{\omega_i\})$ has finite expectation, the invariant measure in the present setting scales linearly to Lebesgue measure, which explains the additional factor of $n^{-1}$ in front of $\Delta^\pm$ in Theorem~\ref{thm:mainnew1}(a), as compared to \eqref{rscal}.

\begin{rem}
We note that the contribution from the invariant measure can be neglected if $\alpha>2$. Indeed, \eqref{rscal} and some elementary calculations reveal that
\begin{align*}
\left|\frac 1nL\left(n R_{\text{\upshape{nn}}}^{\alpha,0}(\omega_0,\omega_{\lfloor nx\rfloor })\right)-\frac 1nL\left(R_{\text{\upshape{nn}}}^{\alpha,0}(\omega_0,\omega_{\lfloor nx\rfloor })\right)\right|
\end{align*}
converges to zero in probability if and only if $\alpha>2$. As a consequence, for $\alpha>2$, Theorem~\ref{thm:mainnew1}(b) can be stated more simply as
\[d_{M_1}\left(\left(n^{-1}\underline{X}_{L^{-1}(n t)},n^{-1}\overline{X}_{L^{-1}(n t)}\right)_{t\geq 0},\left(m_{n,-}^{-1}(t),m_{n,+}^{-1}(t)\right)_{t\geq 0}\right)\rightarrow 0\]
in $\P^{\alpha,\lambda/n}$-probability. A similar phenomenon was observed for the Bouchaud trap model with slowly varying traps in \cite[Theorem 1.7]{CM1}, with $\alpha>2$ corresponding to \cite[Assumption 1.4]{CM1}.
\end{rem}

Secondly, in addition to the contribution from the invariant measure, for the statement of the full result of Theorem~\ref{thm:mainnew1} one needs to further consider the effect of non-nearest neighbour edges and the drift term. Whilst both of these add complexity to the argument, the qualitative behaviour of the model remains the same. As for Theorem~\ref{thm:mainnew1}(b), this is obtained from Theorem~\ref{thm:mainnew1}(a) by simply taking appropriate inverses. And, as already noted, Theorem~\ref{thm:mainnew2} is a straightforward application of classical results.

Turning now to Theorem~\ref{thm:mainnew3}, a key point to note in the previous conclusions is that the scaling limits of the exceedance times and the running infimum/supremum are environment measurable, with the Poisson process $\{(x_i,v_i):i\in\mathbb{Z}\}$ capturing the asymptotic behaviour of the locations and sizes of the resistance `barriers' in $\omega$. In particular, the proof of Theorem~\ref{thm:mainnew1} tells us that at time $t$ the random walk $X$ can be found between the first nearest-neighbour edges on the left- and right-hand sides of the origin whose resistances exceed a given threshold. To establish Theorem~\ref{thm:mainnew3} from this, we show that, much more quickly than crossing one of these boundary edges, the random walk $X$ mixes on the part of the space between them. We thus see that the relative position of the random walker between these two edges homogenises to a distribution depending only on the positions of the boundary edges. Moreover, mixing happens sufficiently fast that, on the time scale being considered, the positions of the random walk at distinct time points are asymptotically independent.

In the following remarks, we discuss some natural adaptations/generalisations of our framework.

\begin{rem}
\begin{enumerate}
 \item[(a)] The choice of a unit intensity Poisson process is simply for notational convenience. It would be possible to obtain corresponding results for a general intensity $\rho>0$, with the slowly varying function $L$ being defined by \eqref{eq:slow_varying_tail}.
 \item[(b)] Similar arguments should also apply for more general point processes such that $(\omega_{i+1}-\omega_i)_{i\in\mathbb{Z}}$ are i.i.d.\ and for which the tail of the nearest-neighbour resistance distribution is slowly varying. In this case, one might have to make minor adaptations, replacing the log terms that we use for a generically small deviation, with one suited to the particular slowly-varying function. (Such modifications were managed for the Bouchaud trap model with slowly varying traps in \cite{CM1}.)
\item[(c)] Due to the original physical motivation, it is common in studies of Mott variable-range hopping to include `energy marks', namely a term of the form $-\beta U(E_i,E_j)$ in the exponential defining the conductances at \eqref{cdef}, where $\beta\geq 0$ is the inverse temperature, $U\colon \R\times\R\to[0,1]$ is a symmetric interaction function, and, representing the energy marks, $(E_i)_{i\in\mathbb{Z}}$ is a family of i.i.d.\ random variables on $\mathbb{R}$. The reason for not including such here is again for convenience, and adding them to the model should not affect the arguments in an essential way.
\item[(d)] Whilst we study the constant-speed version of the Mott random walk, with unit mean holding times, another standard choice would be to study the variable-speed version, with generator satisfying
\[(\mathcal{L}^{\alpha,\lambda}f)(\omega_i):=\sum_{j\in\Z}e^{-2\lambda\omega_i}c^{\alpha,\lambda}(\omega_i,\omega_j)\left(f(\omega_j)-f(\omega_i)\right),\]
for bounded $f:\omega\rightarrow\mathbb{R}$. As per \cite[Remarks 1.3 and 3.2]{CFJ}, the same arguments will again apply, but with speed measure being given by $\nu(\{\omega_i\})=e^{2\lambda\omega_i}$, which is in fact easier to study than the invariant measure in the constant-speed case, as defined at \eqref{eq:def_invariant}.
\item[(e)] In the case of a constant drift $\lambda>0$, we expect that the techniques of this article would enable one to show that: as $n\rightarrow\infty$,
\[d_{M_1}\left(\left(n^{-1}{X}_{L^{-1}(n t)}\right)_{t\geq 0},\left(m_{n,+}^{-1}(t)\right)_{t\geq 0}\right)\rightarrow 0\]
in $\mathbb{P}^{\alpha,\lambda}$-probability, and thus
\[\mathbb{P}^{\alpha,\lambda}\left(\left(n^{-1}{X}_{L^{-1}(nt)}\right)_{t\geq 0}\in\cdot\right)\rightarrow\mathbf{P}\left(\left(m_+^{-1}(t)\right)_{t\geq 0}\in\cdot\right).\]
In particular, because in this case the walk moves to the right with little backtracking, the exceedance times $(\Delta^+_x)_{x\geq 0}$ should behave like a sum of i.i.d.\ random variables, each representing the time to cross a resistance barrier (which has order proportional to the size of the barrier). Thus we anticipate the exceedance times to grow asymptotically in the same way as the nearest-neighbour resistance, see \eqref{rscal}, and taking inverses yields the above conclusions. Note that, in contrast to Theorem~\ref{thm:mainnew1}, there is no additional $n$ in the scaling, as the excursions away from the current maximum should not play a significant role in this case.
\item[(f)] It is possible to define the model of this article in higher dimensions. For such, it was shown in \cite{CFP} that the Mott random walk has a non-trivial diffusive scaling limit for any $\alpha>0$.
\end{enumerate}
\end{rem}

We close this discussion by outlining some possible directions for future work. One question that might be asked is about the precise manner in which the rescaled running supremum $\overline X$ moves from a localization site $m_{n,+}^{-1}(t^-)$ to a new maximum $m_{n,+}^{-1}(t)$. Although the Mott random walk has essentially bounded jump length and visits a positive proportion of sites between the two gaps, from Theorems~\ref{thm:mainnew1} and \ref{thm:mainnew2} we know that the running supremum attains these intermediate values for a vanishingly small time. On the other hand, this does not exclude the possibility that there are near-maximal resistance barriers between $m_{n,+}^{-1}(t^-)$ and $m_{n,+}^{-1}(t)$ whose size does not constitute a new record, but which are still large enough to delay the Mott random walk to an extent that is asymptotically visible. More precisely, we formulate the following conjecture about quenched localization for the running supremum.

\begin{conjecture}\label{con:quenched}
There exists an environment-measurable process $(\Gamma_t)_{t\geq 0}$ with $\Gamma_t\subseteq\omega$ and $|\Gamma_t|=2+\lfloor\frac{1}{\alpha-1}\rfloor$ such that
\begin{align}\label{eq:quenched_loc}
\lim_{t\to\infty}P^{\alpha,0}_\omega\left(\overline X_t\in\Gamma_t\right)=1,\qquad \mathbf{P}\mbox{-a.s.}
\end{align}
There is no environment-measurable set satisfying \eqref{eq:quenched_loc} with smaller cardinality than $\Gamma_t$.
\end{conjecture}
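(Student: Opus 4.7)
My plan is to construct an explicit environment-measurable candidate set $\Gamma_t$ from the positions of the nearest-neighbour resistance barriers, and then to separately establish the localisation inclusion in \eqref{eq:quenched_loc} and the minimality of $|\Gamma_t|$. For the definition, I would use the scaling limit picture from Theorems~\ref{thm:mainnew1} and \ref{thm:mainnew2} to identify the `current bottleneck index' $k^\ast = k^\ast(t, \omega)$, characterised by $\omega_{k^\ast} r^{\alpha, 0}(\omega_{k^\ast}, \omega_{k^\ast + 1}) \asymp t$, and the index $k^{\ast\ast} > k^\ast$ of the next nearest-neighbour resistance record. The two `boundary' elements of $\Gamma_t$ would be $\omega_{k^\ast}$ (corresponding to the walk not yet having crossed the current bottleneck by time $t$) and $\omega_{k^{\ast\ast} - 1}$ (corresponding to full exploration of the sub-region up to the next record); the remaining $\lfloor 1/(\alpha-1) \rfloor$ elements should sit just to the left of the largest sub-record barriers in $(\omega_{k^\ast}, \omega_{k^{\ast\ast}})$ whose product with the corresponding invariant-measure mass is comparable to $t$.

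For the upper bound, I would refine the proof of Theorem~\ref{thm:mainnew1}(b), using the birth-death hitting-time formula
\[
E^{\alpha, 0}_\omega[\tau_{\omega_j}] = \sum_{i < j} r^{\alpha, 0}(\omega_i, \omega_{i+1}) \mu([0, \omega_i]),
\]
together with a concentration bound via the variance (which can be computed exactly for birth-death chains), to give quantitative quenched estimates showing $\overline X_t \in \Gamma_t$ with probability tending to 1. For the minimality, I would produce, for $\mathbf{P}$-almost every $\omega$, a deterministic sequence of times $t_n = t_n(\omega) \to \infty$ at which the quenched distribution of $\overline X_{t_n}$ has $2 + \lfloor 1/(\alpha-1) \rfloor$ well-separated modes, each with quenched probability bounded below by a positive constant. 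This forces any candidate set of smaller cardinality to miss a non-vanishing amount of mass along the subsequence. The multi-modality is produced via gambler's-ruin estimates for birth-death chains: for the boundary modes, through the probability that the walk has or has not crossed the current bottleneck; for each intermediate mode, through the probability that the walk crosses the current record and then becomes trapped just before the corresponding sub-record barrier.

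The main obstacle is the precise identification and counting of the intermediate candidates. Verifying that exactly $\lfloor 1/(\alpha-1) \rfloor$ sub-record barriers in $(\omega_{k^\ast}, \omega_{k^{\ast\ast}})$ yield a macroscopic contribution to the hitting time requires a careful comparison of the slow variation of $L$ at \eqref{ldef} with the linear growth of the invariant measure $\mu$ at \eqref{eq:def_invariant}; the transition at $\alpha = 2$ matches the phenomenon noted in the remark following Theorem~\ref{thm:mainnew1}, and the count is naturally expressed via $\lfloor 1/(\alpha-1) \rfloor$. A secondary obstacle is promoting convergence from in-probability (as in Theorems~\ref{thm:mainnew1} and \ref{thm:mainnew2}) to the $\mathbf{P}$-almost sure statement of \eqref{eq:quenched_loc}, which I expect to address by a Borel--Cantelli argument once a summable large-deviation estimate for the quenched probabilities has been established.
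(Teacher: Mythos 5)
The statement you are asked to prove is a \emph{conjecture} in the paper, and the paper offers no proof of it. The authors explicitly present it as an open problem, motivated by analogy with the Bouchaud trap model (\cite{CM2,M15}) and supported only by the heuristic discussion immediately following its statement. So there is no proof in the paper to compare against; what you have written is a research plan rather than a verification of an existing argument.

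As a research plan it is broadly aligned with the paper's own heuristic picture (barrier records and near-records determining the localization set, a threshold at $\alpha=2$ tied to the invariant measure, the cardinality $2+\lfloor 1/(\alpha-1)\rfloor$), but it contains a concrete technical error and leaves the genuinely hard steps unaddressed. The error: the Mott random walk is \emph{not} a birth--death chain. The conductances $c^{\alpha,\lambda}(\omega_i,\omega_j)$ at \eqref{cdef} are positive for all pairs $i\neq j$, so $X$ makes long-range jumps, and the exact birth--death hitting-time identity
\[
E^{\alpha,0}_\omega[\tau_{\omega_j}]=\sum_{i<j}r^{\alpha,0}(\omega_i,\omega_{i+1})\,\mu([0,\omega_i])
\]
does not hold. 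This is precisely why the paper works with effective resistances for the full long-range network (Lemma~\ref{lem:R_bounds}) rather than nearest-neighbour sums, and why the remark after \eqref{rscal} emphasises that handling non-nearest-neighbour edges is part of the technical work. Any quantitative concentration bound for the crossing time would have to go through the commute-time identity and effective-resistance estimates for the long-range graph, not through a closed-form birth--death variance.

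Beyond that, the two steps you flag as ``main'' and ``secondary'' obstacles --- (i) identifying and counting exactly $\lfloor 1/(\alpha-1)\rfloor$ sub-record barriers that contribute macroscopically and proving the corresponding multi-modality, and (ii) upgrading from in-probability statements (which is all that Theorems~\ref{thm:mainnew1} and \ref{thm:mainnew2} provide) to the $\mathbf{P}$-a.s.\ localization required by \eqref{eq:quenched_loc} --- are exactly the parts for which the paper has no argument and which make this a conjecture. Sketching that these points ``should'' follow from Borel--Cantelli and slowly-varying-function comparisons does not close the gap; the paper itself notes that even in the simpler Bouchaud setting such a.s.\ statements required substantial separate work in \cite{CM2,M15}. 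In short: your proposal does not establish the conjecture, would need its key quantitative estimate replaced by a long-range analogue, and the substantive new mathematics required remains entirely open.
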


\noindent
It is clear that $(\Gamma_t)_{t\geq0}$ must infinitely often contain sets of the form $\{ m_{1,+}^{-1}(t^-), m_{1,+}^{-1}(t)\}$ with $m_{1,+}^{-1}(t^-)\neq m_{1,+}^{-1}(t)$, and thus have size at least two. (Here we refer to $m_{1,+}^{-1}$ rather than $m_{n,+}^{-1}$, as we are now discussing the unrescaled Mott random walk.) The above conjecture says that this estimate on the size of $\Gamma(t)$ is optimal for $\alpha> 2$ and that the transition of the localisation site from $m_{1,+}^{-1}(t^-)$ to $m_{1,+}^{-1}(t)$ happens as a single, large jump in that regime. For $\alpha\in(1,2]$, however, there is, infinitely often, a finite, deterministic number of near-record gaps between $m_{1,+}^{-1}(t^-)$ and $m_{1,+}^{-1}(t)$ where $\overline X$ briefly localizes. The inspiration for Conjecture~\ref{con:quenched} comes from a similar statement that is known to hold for the Bouchaud trap model with slowly varying holding times, see \cite{CM2,M15}. Additionally, we point to \cite[Theorem 1.5]{CM2} for almost-sure bounds on the ratio of a sum of slowly-varying random variables to their maximum, which provides useful insight into the nature of near maxima in sequences of i.i.d.\ random variables with a slowly-varying distributional tail.

In several places in the above discussion, parallels with the Bouchaud trap model have been drawn. This is natural given that the latter model can be seen as something of a dual to the Mott random walk. Indeed, both models admit anomalous scaling limits, with those of the Bouchaud trap model being characterized by trapping in single sites with large holding time, whereas in our setup the random walk itself is never stationary (on an asymptotic scale), but is constrained within regions between large resistance barriers. We refer to \cite[Section 1.5]{CFJ} for a more detailed discussion of this connection. By equipping each atom $(\omega_i)_{i\in\Z}$ independently with a random holding time mean $(\tau_i)_{i\in\Z}$ with suitable tail behavior, we believe that it is possible to construct a random walk that exhibits both the `trapping'  and `blocking' behavior in such a way that an interesting limit process arises, similar to \cite[Theorem~1.8]{CFJ}, which provides such a result in the case of heavy-tailed random variables. Towards proving an extension of this kind, we note that the `blocking' behavior in the Mott random walk is essentially caused by the scaling of the effective resistance, while the `trapping' caused by large holding times emerges from the scaling of the invariant measure. In the slowly-varying regime, both of these objects are supported by extreme values. It is thus reasonable to believe that effective resistance and invariant measure jointly converge to independent scaling limits, which can be used to characterize the potential limit process.

The remainder of the article is organised as follows. In Section~\ref{sec:prelim}, we define features of the environment that occur with high probability and facilitate various random walk estimates being made, and also describe the scaling limit of the invariant measure of the Mott random walk. The key random walk estimates for our argument are then established in Section~\ref{sec:rw}. Finally, in Section~\ref{sec:proof}, we put these pieces together in order to prove Theorems~\ref{thm:mainnew1}, \ref{thm:mainnew2} and \ref{thm:mainnew3}. Concerning notation, constants of form $c,c_i,C,C_i$ may depend on parameters that appear in the argument, such as $\alpha$, $\lambda$, $\delta$, $K$, $\eta$, but will not depend on the level of scaling $n$ or the particular random environment $\omega$. Moreover, we will sometimes use a continuous variable, $x$ say, where a discrete argument is required, with the understanding that it should be treated as $\lfloor x\rfloor$.

\section{Typical structure of the environment}
\label{sec:prelim}

The aim of this section is to describe the typical configuration of the environment in which the Mott random walk evolves. In particular, after introducing some preliminaries concerning the scaling of edges of high resistance in Subsection~\ref{21}, we define an environment with particular features that is seen with high probability in Subsection~\ref{22}. Finally, we derive the asymptotic behaviour of the invariant measure of the Mott random walk in Subsection~\ref{23}.

\subsection{Convergence of the records}\label{21}
In this subsection we focus on the edges of high nearest-neighbour resistance, which will be the critical factor in describing the behaviour of the random walk. We start by defining barrier locations $(a_i^\pm)_{i\in\mathbb{Z}}$ and corresponding barrier sizes $(g_i^\pm)_{i\in\mathbb{Z}}$. We highlight that, although we suppress it from the notation, these are defined for each scale $n$ model separately, and also depend on a deterministic constant $\delta>0$. Specifically, for the barriers on the positive axis, we set $a_0^+:=\mathrm{argmax}_{j\in\{0,1,\dots,\lfloor n\delta\rfloor\}}r^{\alpha,0}(\omega_j,\omega_{j+1})$ (noting that, $\mathbf{P}$-a.s., there are no ties),
\begin{equation}\label{adef}
a_{i+1}^+:=\inf\left\{j\geq a_{i}^+:\:r^{\alpha,0}(\omega_j,\omega_{j+1})>r^{\alpha,0}(\omega_{a_i^+},\omega_{a_{i}^++1})\right\},\qquad \forall i\geq 0,
\end{equation}
and also
\[a_{i-1}^+:=\mathrm{argmax}_{j\in\{0,1,\dots,a_{i}^+-1\}}r^{\alpha,0}(\omega_j,\omega_{j+1}),\qquad \forall i\leq 0,\]
where we define $a_{i-1}^+:=0$ if $a_i^{+}=0$. We moreover write
\begin{equation}\label{gdef}
g_i^+:=r^{\alpha,0}(\omega_{a_i^+},\omega_{a_{i}^++1}),\qquad \forall i\in\mathbb{Z}.
\end{equation}
Proceeding `leftwards' from 0 along the negative axis, we define $(a_i^-)_{i\in\Z}$ and $(g_i^-)_{i\in\Z}$ similarly.

The above barriers are nothing but the record process of the nearest-neighbour edge resistances, with the dependence on $n$ and $\delta$ only determining the parameterisation. In the context of extreme value theory, it is standard to view such a record process as a function of a certain point process. To this end, we introduce a random measure on $\R\times (0,\infty)$ defined by
\begin{align}
  \label{eq:PPPindex}
  \sum_{j\in \Z}\delta_{(j/n, L(r^{\alpha,0}(\omega_j,\omega_{j+1}))/n)}.
\end{align}
We consider $L(r^{\alpha,0}(\omega_j,\omega_{j+1}))$ instead of $r^{\alpha,0}(\omega_j,\omega_{j+1})$ since the distribution
\begin{equation}\label{ltail}
\mathbf{P}\left(L(r^{\alpha,0}(\omega_j,\omega_{j+1}))>u\right)=\frac{1}{u}
\end{equation}
is a well-documented case in the literature of extreme value theory. Indeed, we know from \cite[Corollary 4.19]{Res87} that the measure at \eqref{eq:PPPindex} converges in distribution to the Poisson random measure $\zeta=\sum_{j\in\Z} \delta_{(x_j, v_j)}$ with intensity $v^{-2}\dd x \dd v$. Precisely, we view the measures in question as random measures on $\R \times (0,\infty]$, where $(0,\infty]$ is the compactification of $(0,\infty)$ at $+\infty$, equipped with the vague topology. This compactification makes the function
\begin{align*}
\sum_{j\in\Z} \delta_{(x_j, v_j)}\mapsto (\max\{v_j\colon 0\le x_j \le x\})_{x\geq 0}
\end{align*}
a continuous map into $D([0,\infty),\R)$ when the latter space is equipped with the Skorohod $J_1$ topology and hence, by the continuous mapping theorem,
\[\begin{split}
\left( \frac{1}{n} \max_{0 \le j \le xn} L(r^{\alpha,0}(\omega_j,\omega_{j+1})) \right)_{x\geq 0}\xrightarrow{n\to\infty} (m_+)_{x\geq 0}
\end{split}\]
in distribution, where $m_+$ was defined at \eqref{mplus}, see \cite[Proposition 4.20]{Res87}.

Next, we define $(a_k^{+,\Pois},g_k^{+,\Pois})_{k\in\Z}$ from the above Poisson random measure $\zeta$ as follows:
\[a_0^{+,\Pois}:=\mathrm{argmax}\left\{v_i:\:0\leq x_i\leq \delta\right\},\qquad g_0^{+,\Pois}:=\mathrm{max}\left\{v_i:\:0\leq x_i\leq \delta\right\},\]
and
\[a_{k+1}^{+,\Pois}:=\mathrm{inf}\left\{x_i\geq 0:\:v_i>g_{k}^{+,\Pois}\right\},\qquad \forall k\geq 0,\]
\[a_{k-1}^{+,\Pois}:=\mathrm{argmax}\left\{v_i:\:0\leq x_i<a_{k}^{+,\Pois}\right\},\qquad \forall k\leq 0,\]
where we write $g_k^{+,\Pois}$ for the value of $v_i$ for the unique atom $(x_i,v_i)$ in $\zeta$ with $x_i=a_k^{+,\Pois}$. We define $(a_k^{-,\Pois},g_k^{-,\Pois})_{k\in\Z}$ on the negative side similarly. From basic properties of the Poisson random measure, one can check that, almost-surely, these quantities are well-defined and satisfy
\begin{equation}\label{aok}
\dots<a_1^{-,\Pois}<a_0^{-,\Pois}<a_{-1}^{-,\Pois}<\dots<0<\dots<a_{-1}^+<a_0^{+,\Pois}<a_1^{+,\Pois}<\dots,
\end{equation}
\begin{equation}\label{gok}
0<\dots<g_{-1}^{*,\Pois}<g_0^{*,\Pois}<g_1^{*,\Pois}<\dots, \text{ for }*\in\{+,-\},
\end{equation}
\begin{equation}\label{gsep}
  \min\left\{|g_k^{+,\Pois}-g_l^{-,\Pois}|\colon -M\le k, l \le M\right\}>0, \text{ for any }M\in\N,
\end{equation}
and also
\begin{equation}\label{adiv}
a_k^{\pm,\Pois}\xrightarrow{k\rightarrow\infty} \pm \infty, \quad g_k^{\pm,\Pois}\xrightarrow{k\rightarrow\infty} \infty,\quad g_k^{\pm,\Pois}\xrightarrow{k\rightarrow-\infty}0,
\end{equation}
\begin{equation}\label{gto0}
g_0^{\pm,\Pois}\xrightarrow{\delta\downarrow 0} 0.
\end{equation}

Finally, by applying the Skorohod representation theorem, we can construct a coupling so that \eqref{eq:PPPindex} converges to $\zeta$ almost surely. Under this coupling, the atoms of \eqref{eq:PPPindex} away from $v=0$ converge to those of $\zeta$ and, as a consequence, the jumps of the extremal process $(m_{n,+}(x))_{x>0}$ away from $x=0$ converge to those of $m_+$. (Recall the definition of $m_{n,+}$ from \eqref{mnp}.) Since the same argument applies to $m_{n,-}$ and $m_-$ (as defined at \eqref{mnn} and \eqref{mminus}, respectively), we have
\begin{equation}\label{agconvindex}
\left(n^{-1}{a_k^\pm},n^{-1}L\left(g_k^\pm\right)\right)_{k\in\Z}\rightarrow \left(a_k^{\pm,\Pois},g_k^{\pm,\Pois}\right)_{k\in\Z}
\end{equation}
as $n\to\infty$. We remark that it might have been more natural to write $g_k^{+,\Pois}$ for the value of $L^{-1}(v_i)$ for the unique atom $(x_i,v_i)$ in $\zeta$ with $x_i=a_k^{+,\Pois}$, so that the above limit involves $L(g_k^{+,\Pois})$; we adopt the above convention for brevity of notation later in the paper. Furthermore, it will be useful later to write the above convergence in terms of the spatial position of discrete records, rather than their index. In particular, using the
functional law of large numbers, i.e.\
\begin{align*}
\sup_{t\in [0,T]}|\tfrac{1}{n}\omega_{\lfloor tn\rfloor}-t|\xrightarrow{n\to\infty} 0,\quad \mathbf P\text{-a.s.},
\end{align*}
for any $T>0$, the convergence at \eqref{agconvindex} yields that, almost surely as $n\to\infty$,
\begin{equation}\label{agconv}
\left(n^{-1}\omega_{a_k^\pm},n^{-1}L\left(g_k^\pm\right)\right)_{k\geq 1}\rightarrow \left(a_k^{\pm,\Pois},g_k^{\pm,\Pois}\right)_{k\geq 1}.
\end{equation}

\subsection{Features of a typical environment}\label{22}

In this subsection, we collect various `nice' features of a typical environment that we will use in the proof of our main results, and prove that they indeed hold with high probability. To this end, we define, for $k\in\Z$, the first exceedance of $g_k^+$ by a record on the left side of the origin,
\begin{align}\label{eq:def_ex}
\ex(k):=\inf\{l\in\Z:g^-_l\geq g^+_k\}
\end{align}
Note that by construction
\[\gtt{k}=\max_{j\in\{\at{k},\dots,-1\}}r^{\alpha,0}(\omega_j,\omega_{j+1})\]
represents the size of the largest barrier between $\omega_{\at{k}}$ and the origin. Let $\ell_i$ be the $i$-th iterate of the logarithm, for example, $\ell_2(n)=\log\log n$, and let
\[N:=2+\left\lfloor\frac{1}{\alpha-1}\right\rfloor\]
be the constant appearing in Conjecture~\ref{con:quenched}.

Applying the above notation, we now introduce an event that we will use in Section~\ref{sec:rw}, which ensures that the environment behaves typically.

\begin{definition}\label{def:A}
For $\delta>0$ and $K,n\in\N$, let $A^{\delta,K}_n$ be the set of those $\omega$ that satisfy the following properties.
\begin{itemize}
\item The record locations are at (approximately) linear distance from the origin:
\begin{align}
\omega_{a_k^+},-\omega_{\at{k}}\leq n\ell_3(n),\qquad\text{ for any }k=0,...,K.\label{eq:space_scaling}
\end{align}
\item New record values are much larger than any previous record:
\begin{align}
\frac{g_{k}^+}{g_{k-1}^+ + \gtt{k}}> e^{\log^{\alpha-1} n/\ell_2(n)},\qquad\text{ for any }k=1,...,K.\label{eq:separation}
\end{align}
\item The contribution from non-record values is bounded in terms of the last record value:
\begin{align}
&{\textstyle \sum_{l=1}^{a_k^+} }r^{\alpha,0}(\omega_{l-1},\omega_{l})\le Ng_{k-1}^+,&\text{for any }k=1,...,K,\label{eq:NN+}\\
&{\textstyle \sum_{l=\at{k}+1}^0} r^{\alpha,0}(\omega_{l-1},\omega_l)\le N \gtt{k},&\text{for any }k=1,...,K.\label{eq:NN-}
\end{align}
\item The atoms of the invariant measure (recall \eqref{eq:def_invariant}) are (approximately) bounded:
\begin{align}
e^{-\log^{\alpha+1}n}\leq c^{\alpha,\lambda/n}(\omega_j)\leq \ell_1(n)^2,\qquad\text{ for all }j=-n\ell_3(n),...,n\ell_3(n)\label{eq:pointwise}.
\end{align}
\item The total mass of the invariant measure grows (approximately) linearly:
\begin{align}
c_1n \le {\textstyle \sum_{l=\at{k}}^{a_k^+}} c^{\alpha,\lambda/n}(\omega_l) \le c_2n \ell_1(n)^2.\label{eq:mass_control}
\end{align}
\end{itemize}
The event $\tilde A_n^{\delta,K}$ is defined in the same way, with the role of positive and negative axis reversed.
\end{definition}

The next result shows that these estimates hold with high probability.

\begin{proposition}
  \label{prop:Awhp}
  For any $\delta>0$ and $K\in\N$, $\lim_{n\to\infty}\mathbf{P}(A^{\delta,K}_n)=1$.
\end{proposition}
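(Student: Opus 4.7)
My plan is to verify each of the five conditions in Definition~\ref{def:A} with $\mathbf{P}$-probability tending to $1$ and conclude via a union bound. The common input is the joint convergence \eqref{agconv}, which I may take to hold $\mathbf{P}$-almost surely via the Skorokhod representation theorem. The spatial bound \eqref{eq:space_scaling} is then immediate, since $n^{-1}\omega_{a_k^\pm}\to a_k^{\pm,\Pois}$, which is a.s.\ finite, while $\ell_3(n)\to\infty$. For the separation \eqref{eq:separation}, the coupling gives $L(g_k^\pm)/n\to g_k^{\pm,\Pois}$; inverting via $L^{-1}(u)=e^{\log^\alpha u}$ and Taylor expanding $(\log n+\log g_k^{\pm,\Pois}+o(1))^\alpha$ around $\log n$ yields
\[\log g_k^\pm=\log^\alpha n+\alpha(\log g_k^{\pm,\Pois})\log^{\alpha-1}n+O(\log^{\alpha-2}n),\]
so $\log(g_k^+/g_{k-1}^+)=\alpha\log(g_k^{+,\Pois}/g_{k-1}^{+,\Pois})\log^{\alpha-1}n+O(\log^{\alpha-2}n)$. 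Since $g_k^{+,\Pois}>g_{k-1}^{+,\Pois}$ strictly by \eqref{gok}, the leading coefficient is a positive random constant, producing growth like $e^{c\log^{\alpha-1}n}$ that far exceeds the target $e^{\log^{\alpha-1}n/\ell_2(n)}$ as $\ell_2(n)\to\infty$. The comparison with $\gtt{k}$ uses $g_{\ex(k)-1}^{-,\Pois}<g_k^{+,\Pois}$, guaranteed by the definition of $\ex(k)$ and \eqref{gsep}.

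The main obstacle is the pair of sum bounds \eqref{eq:NN+}--\eqref{eq:NN-}. Observe that the maximum of $\{r^{\alpha,0}(\omega_{l-1},\omega_l):1\leq l\leq a_k^+\}$ is $g_{k-1}^+$, and $a_k^+/n$ is bounded in probability by \eqref{agconv}. For an i.i.d.\ sample $X_1,\dots,X_m$ with tail $1/L(u)=e^{-\log^{1/\alpha}u}$, the same Taylor expansion applied to the order statistics $X^{(1)}\geq X^{(2)}\geq\dots$ gives $X^{(k)}/X^{(1)}\leq k^{-\alpha\log^{\alpha-1}m(1+o(1))}$ for $\log k\ll\log m$, hence with $S_m=X_1+\dots+X_m$,
\[\frac{S_m}{X^{(1)}}-1\leq\sum_{k\geq 2}k^{-\alpha\log^{\alpha-1}m(1+o(1))}=O\bigl(1/\log^{\alpha-1}m\bigr)\to 0\]
in probability; a crude bound (using $X^{(k)}\leq L^{-1}(2m/k)$) controls the tail from $k\geq m^{1/2}$. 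Applied at $m=a_k^+\to\infty$ and $X^{(1)}=g_{k-1}^+$, this yields $\sum_{l=1}^{a_k^+}r^{\alpha,0}(\omega_{l-1},\omega_l)\leq 2g_{k-1}^+\leq N g_{k-1}^+$ with high probability, using $N\geq 2$. The delicate point is that the upper limit $a_k^+$ is random and coupled to the environment, so the extreme-value estimate must hold uniformly over a typical range of $a_k^+$; this is handled by conditioning on the Poisson-limit value $a_k^{+,\Pois}$. Equation \eqref{eq:NN-} follows by symmetry.

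The invariant measure bounds are more routine. For \eqref{eq:pointwise}, I write $c^{\alpha,\lambda/n}(\omega_j)\leq e^{2|\lambda|\ell_3(n)}\sum_i e^{-|\omega_j-\omega_i|^\alpha+|\lambda||\omega_i-\omega_j|/n}$: since $\alpha>1$, the $|\omega_i-\omega_j|^\alpha$ term dominates the linear perturbation, so the sum is uniformly $O(1)$ on $|j|\leq n\ell_3(n)$ by Poisson concentration for local atom densities plus a union bound. The prefactor $e^{2|\lambda|\ell_3(n)}=\ell_2(n)^{2|\lambda|}$ is much smaller than $\ell_1(n)^2$. The lower bound uses that the nearest-neighbour conductance satisfies $c^{\alpha,\lambda/n}(\omega_j,\omega_{j\pm1})\geq e^{-O(\log^\alpha n)}\gg e^{-\log^{\alpha+1}n}$, exploiting that the maximum Poisson gap on $[-n\ell_3(n),n\ell_3(n)]$ is $O(\log n)$ with high probability. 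The total mass \eqref{eq:mass_control} follows from a Chebyshev estimate on the sum of $O(n)$ random variables $c^{\alpha,\lambda/n}(\omega_l)$, each with $O(1)$ mean and variance under $\mathbf{P}$.
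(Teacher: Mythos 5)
Your proposal is sound for conditions \eqref{eq:space_scaling}, \eqref{eq:separation}, \eqref{eq:pointwise} and \eqref{eq:mass_control}, and the overall structure (verify each condition with high probability, union bound) matches the paper. For \eqref{eq:separation} your Taylor-expansion route is a minor variant of the paper's convexity argument and is correct; for \eqref{eq:pointwise} the paper instead uses Campbell's theorem to get an exponential moment bound, which is cleaner than appealing to ``Poisson concentration'' (and yields $O(\log n)$ rather than $O(1)$ for the inner sum, but this still suffices).

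The genuine gap is in \eqref{eq:NN+}--\eqref{eq:NN-}. You correctly identify that the heart of the matter is a sum-to-max bound for i.i.d.\ random variables with slowly-varying tails, and your order-statistics Taylor expansion gives the right heuristic for a \emph{deterministic} sample size $m$. But the sum runs up to the random index $a_k^+$, which is by definition the location of a record of the very sequence $(r^{\alpha,0}(\omega_{l-1},\omega_l))_l$. Conditioning on $\{a_k^+=m\}$ therefore biases the joint law of $(r^{\alpha,0}(\omega_{l-1},\omega_l))_{l\le m}$ away from the i.i.d.\ product (you are conditioning on a specific record structure), so the unconditional order-statistics estimate does not transfer. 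Conditioning on the Poisson-limit value $a_k^{+,\Pois}$, as you propose, does not repair this: it neither fixes $a_k^+$ at finite $n$ nor decouples it from the values $r^{\alpha,0}(\omega_{l-1},\omega_l)$ for $l\le a_k^+$. The paper resolves this by writing $a_k^+$ as the position of the $\nu_k$-th record, splitting over $\{\nu_k\ge J\}$ and $\{\nu_k<J\}$, showing $\nu_k\to\infty$ with high probability via \cite[Proposition 4.9]{Res87}, and then invoking \cite[Lemma 3.8]{CM2}, which gives a bound on $\mathbf{P}\bigl(\sum_{l=1}^{a_k^+}r^{\alpha,0}(\omega_{l-1},\omega_l)\ge N g_{k-1}^+,\,\nu_k=j\bigr)$ that is summable over $j\ge J$, uniformly in $n$, and tends to zero as $J\to\infty$. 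This record-count decomposition (or an equivalent argument) is exactly what is missing from your sketch; without it, or without reproving the CM2 lemma, the claim for random $a_k^+$ does not follow from your order-statistics estimate.

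A smaller point: your Chebyshev argument for \eqref{eq:mass_control} again sums over a random range $\{\at{k},\dots,a_k^+\}$. The paper sidesteps this by sandwiching the random range between deterministic ones (using $a_k^+\ge\lfloor\delta n\rfloor$ for the lower bound and \eqref{eq:space_scaling} together with \eqref{eq:mass3/2} for the upper bound), so the ergodic/LLN estimate is only applied to deterministic intervals. Your argument should be restructured the same way.
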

\begin{proof}
It follows from~\eqref{aok}, \eqref{adiv} and \eqref{agconv} that
\begin{align*}
    \lim_{M\to\infty}\liminf_{n\to\infty}\mathbf{P}\left(\{\omega_{a_k^+}/n\}_{0\le k \le K} \subset [M^{-1}, M] \text{ and } \{\omega_{\at{k}}/n\}_{0\le k \le K}\subset [-M,-M^{-1}]\right)=1.
\end{align*}
This implies that~\eqref{eq:space_scaling} holds with high probability.

Next, note that by \eqref{gok}, \eqref{adiv} and \eqref{agconv}, the probability of $g_{-M}^-\leq \gtt{0} \leq \gtt{K}\leq g_M^-$ can be made arbitrarily large by making $M$ large. Applying this estimate in conjunction with \eqref{gok}, \eqref{gsep} and \eqref{agconv} again, we obtain
\begin{align}
  \label{eq:fin-vol2}
  \lim_{\epsilon\to 0}\liminf_{n\to\infty} \mathbf{P}
    \begin{pmatrix}
      \min\{|L(g_k^+)/n-L(\gtt{l})/n|\colon 0\le k,l \le K\}\ge \epsilon,\\
      \min\{|L(g_k^+)/n-L(g^+_{k-1})/n|\colon 1\le k \le K\}\ge \epsilon,\\
      \{L(g_k^+)/n, L(\gtt{k})/n\}_{k=0}^K \subset [\epsilon,\epsilon^{-1}]
    \end{pmatrix}
    =1.
\end{align}
We shall assume that the event in \eqref{eq:fin-vol2} holds for some $\epsilon <1$. Then, for $1\leq k\leq K$, we have
\begin{align*}
  L(g_k^+)/n \ge \max\{L(g_{k-1}^+)/n, L(\gtt{k})/n\} +\epsilon,
\end{align*}
and thus, recalling \eqref{eq:inverse} and using the convexity of $x\mapsto x^\alpha$, it can be checked that
\[  g_k^+ \ge \exp\left\{\log^\alpha (L(g_{k-1}^+)+\epsilon n)\right\}\ge g_{k-1}^+ \exp\left\{\alpha \log^{\alpha-1} (L(g_{k-1}^+)) \log\left(1+\epsilon n/L(g_{k-1}^+)\right)\right\}.\]
Using the inequality $\log (1+x) \ge x/2$ for $x\in[0,1]$ in the last logarithmic factor and noting that we have $\epsilon n \le  L(g_{k-1}^+)\le \epsilon^{-1}n$ on the event in \eqref{eq:fin-vol2}, we find
\begin{align*}
  g_k^+\ge g_{k-1}^+\exp\left(\tfrac{\alpha \epsilon^2}{2} \log^{\alpha-1} (\varepsilon n)\right),
\end{align*}
and similarly $g_k^+\ge \gtt{k}\exp(\tfrac{\alpha \epsilon^2}{2} \log^{\alpha-1}(\varepsilon n))$. The second assertion \eqref{eq:separation} follows from these bounds together with \eqref{eq:fin-vol2}.

To prove \eqref{eq:NN+} and \eqref{eq:NN-}, let $\nu_k\in \N$ be such that $a^+_k$ is the index of the $\nu_k$-th record in the sequence $(r^{\alpha,0}(\omega_l,\omega_{l+1}))_{l\ge 0}$. Then we have
\begin{equation}
  \label{eq:CMused}
\mathbf{P}\left({\textstyle \sum_{l=1}^{a_k^+}}r^{\alpha,0}(\omega_{l-1},\omega_{l}) \geq N g_{k-1}^+\right) \le \sum_{j\ge J}\mathbf{P}\left({\textstyle \sum_{l=1}^{a_k^+}}r^{\alpha,0}(\omega_{l-1},\omega_{l}) \geq N g_{k-1}^+, \:\nu_k=j\right)
  + \mathbf{P}(\nu_k< J).
\end{equation}
By \cite[Proposition 4.9]{Res87}, the process $m^+$ has infinitely many jumps near the origin. This and \eqref{agconv} show that $\nu_k \to \infty$ as $n\to\infty$ with high probability. Thus the second term on the right-hand side of~\eqref{eq:CMused} tends to zero as $n\to\infty$. On the other hand, \cite[Lemma 3.8]{CM2} gives a bound for the first term on the right-hand side of~\eqref{eq:CMused} that is uniform in $n$ and tends to zero as $J\to\infty$. From these facts, \eqref{eq:NN+} follows. The bound at \eqref{eq:NN-} can be proved in the same way.

We turn to proving the upper bound in \eqref{eq:pointwise}. We begin by rewriting
  \begin{align}\label{eq:c_rep}
    c^{\alpha,\lambda/n}(\omega_j)&=\sum_{l\in\Z\setminus\{j\}}e^{\lambda(\omega_j+\omega_l)/n-|\omega_l-\omega_j|^\alpha}=e^{2\lambda\omega_j/n}\sum_{l\in\Z\setminus\{j\}}e^{\lambda(\omega_l-\omega_j)/n-|\omega_l-\omega_j|^\alpha}.
  \end{align}
The point process $(\omega_l-\omega_j)_{l\in\Z\setminus\{j\}}$ is a Poisson point process with unit intensity, independent of $\omega_j$. Therefore, by using Campbell's theorem for Poisson processes, we get
\begin{align*}
  \mathbf{E}\left[\exp\left(2\sum_{l\in\Z\setminus\{j\}}e^{\lambda(\omega_l-\omega_j)/n-|\omega_l-\omega_j|^\alpha}\right)\right]
  =\exp\left(\int \left(e^{2e^{\lambda x/n -|x|^\alpha}}-1\right) \dd x\right).
\end{align*}
One can readily verify that the right-hand side is bounded by a constant  $c_\lambda>0$ that depends only on $\lambda$. By using Chebyshev's inequality, we can thus deduce
\begin{align*}
  \mathbf{P}\left(\sum_{l\in\Z\setminus\{j\}}e^{\lambda(\omega_l-\omega_j)/n-|\omega_l-\omega_j|^\alpha} \le \log n\right)\ge 1-c_\lambda n^{-2},
\end{align*}
and, by applying the union bound, one further obtains that the above events hold for all $j\in[-n\ell_3(n),n\ell_3(n)]$ with high probability. In addition, the law of large numbers implies that $\max_{j=-n\ell_3(n),...,n\ell_3(n)}|\omega_j| = \max\{-\omega_{-n\ell_3(n)},\omega_{n\ell_3(n)}\}\leq 2n\ell_3(n)$ with high probability. Combining these bounds with \eqref{eq:c_rep}, we find that
\begin{align}
  \label{eq:mass3/2}
  c^{\alpha,\lambda/n}(\omega_j)&
  \le e^{4|\lambda| \ell_3(n)}\log n \le \ell_1(n)^{3/2}
\end{align}
holds for all $j\in[-n\ell_3(n),n\ell_3(n)]$ with high probability as $n\to\infty$. For the lower bound in \eqref{eq:pointwise} we observe, for all $i=-n\ell_3(n),...,n\ell_3(n)$,
\begin{align*}
c^{\alpha,\lambda/n}(\omega_i)=\sum_{j\in\Z\setminus\{i\}} e^{-|\omega_i-\omega_j|^\alpha+\lambda(\omega_i+\omega_j)/n}\geq  e^{-|\omega_i-\omega_{i+1}|^\alpha-2|\lambda|\max\{-\omega_{-n\ell_3(n)},\omega_{n\ell_3(n)}\}/n}.
\end{align*}
Hence, the desired lower bound holds, for $n$ large enough, on
\begin{align*}
\left\{\max\{-\omega_{-n\ell_3(n)},\omega_{n\ell_3(n)}\}\leq 2n\ell_3(n)\right\}\cap\left\{\max_{i=-n\ell_3(n),...,n\ell_3(n)}|\omega_{i+1}-\omega_i|\leq 2\log n\right\}.
\end{align*}
The probability of the first event converges to one by the law of large numbers. To see that the second event also has large probability, we apply the union bound together with
\begin{align*}
\mathbf{P}(\omega_{i+1}-\omega_i\geq 2\log n)=n^{-2}.
\end{align*}

Finally, we prove \eqref{eq:mass_control}. By the law of large numbers and \eqref{eq:space_scaling}, we know that $-2n\ell_3(n)\le \at{k}< a_k^+\le 2n\ell_3(n)$ holds with high probability. The upper bound follows from this and \eqref{eq:mass3/2}. For the lower bound, note first that
\begin{align*}
  \sum_{l=\at{k}}^{a_k^+} c^{\alpha,\lambda/n}(\omega_l)
  \ge \sum_{l=0}^{\lfloor \delta n \rfloor} c^{\alpha,\lambda/n}(\omega_l)
\end{align*}
since $a_k^+ \ge \lfloor \delta n \rfloor$. Using the law of large numbers again, we may assume $\omega_{\lfloor \delta n \rfloor} \le 2\delta n$. Then the above sum is bounded from below by a constant multiple of $\sum_{l=0}^{\lfloor \delta n \rfloor} c^{\alpha,0}(\omega_l)$. The proof of Lemma~\ref{lem:quantitativeLLN} in the next subsection shows that the last sum is bounded from below by $cn$.
\end{proof}

Next, let us introduce an event that we will use in Section~\ref{sec:proof} to ensure that the record values, which correspond to the times where the random walk overcomes the corresponding gaps, are bounded away from any fixed time.

\begin{definition}
  \label{def:E}
  For $\delta, \eta>0$, $K\in\N$ and $t>0$, let $E_n^{\delta,\eta,K}$ be the set of all $\omega\in A_n^{\delta,K}$ that satisfy
\[n^{-1}L(\gtt{k}),n^{-1}L(g_{k-1}^+)\leq t-\eta<t+\eta\leq n^{-1}L(g_{k}^+)\]
  for some $k\in\{1,\dots,K\}$. Analogously, the event $\tilde{E}_n^{\delta,\eta,K}$ is defined by reversing the roles of the positive and negative axes.
\end{definition}
Note that the dependence on $t$ is suppressed from $E_n^{\delta,\eta,K}$ to keep the notation compact.
\begin{proposition}
  \label{prop:Ewhp}
For fixed $\varepsilon,t>0$, it is possible to choose $\delta, \eta$ small enough and $K$ large enough so that
\begin{align}
  \label{eq:Ewhp}
    \liminf_{n\to\infty}\mathbf{P}\left({E}_n^{\delta,\eta,K}\cup \tilde{E}_n^{\delta,\eta,K}\right)\geq 1-\varepsilon.
\end{align}
\end{proposition}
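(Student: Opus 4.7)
The plan is to combine Proposition~\ref{prop:Awhp} with the almost-sure coupling \eqref{agconvindex} from Subsection~\ref{21} to transfer a limiting Poisson event to the prelimit. By Proposition~\ref{prop:Awhp}, for any fixed $\delta,K$, the event $A_n^{\delta,K}$ occurs with probability tending to one, so I only need to analyse the inequalities defining $E_n^{\delta,\eta,K}$ and $\tilde E_n^{\delta,\eta,K}$.

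Towards this, I would introduce the Poisson-side quantities $\kappa^+:=\min\{k\geq 1:\, g_k^{+,\Pois}>t\}$ and $\kappa^-:=\min\{k\geq 1:\, g_k^{-,\Pois}>t\}$, which are finite almost surely by \eqref{adiv}, and the limiting exceedance $\ex^{\Pois}(k):=\inf\{l:\,g_l^{-,\Pois}\geq g_k^{+,\Pois}\}$. The key observation is the following two-case dichotomy for the limiting Poisson environment: if $g_{\kappa^+}^{+,\Pois}<g_{\kappa^-}^{-,\Pois}$ then $\ex^{\Pois}(\kappa^+)=\kappa^-$, so $g_{\ex^{\Pois}(\kappa^+)-1}^{-,\Pois}=g_{\kappa^--1}^{-,\Pois}<t$, while also $g_{\kappa^+-1}^{+,\Pois}<t<g_{\kappa^+}^{+,\Pois}$; hence the positive-side witness is $k=\kappa^+$. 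The reverse inequality yields a negative-side witness by the symmetric argument.

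Given $\varepsilon>0$, I would then make three quantitative choices. First, using \eqref{gto0}, choose $\delta>0$ small enough that $\mathbf{P}(g_0^{+,\Pois}<t,\,g_0^{-,\Pois}<t)\geq 1-\varepsilon/4$, guaranteeing that the minima $\kappa^\pm$ are indeed realised among the forward records. Second, since $g_k^{\pm,\Pois}\to\infty$ by \eqref{adiv}, choose $K\in\mathbb{N}$ so that $\mathbf{P}(\kappa^+\vee\kappa^-\leq K)\geq 1-\varepsilon/4$. Third, using \eqref{gok} and \eqref{gsep}, pick $\eta_0>0$ small enough that, with probability at least $1-\varepsilon/4$, none of the finitely many values $\{g_k^{+,\Pois},g_l^{-,\Pois}:|k|,|l|\leq K+1\}$ lies within distance $2\eta_0$ of $t$, and all pairwise gaps among them exceed $2\eta_0$. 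On the intersection of these events the Poisson-side dichotomy above produces a witness $k\leq K$ with the strict inequalities $\max\{g_{k-1}^{+,\Pois},g_{\ex^{\Pois}(k)-1}^{-,\Pois}\}<t-2\eta_0<t+2\eta_0<g_k^{+,\Pois}$ (or its analogue on the negative side).

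To finish, I would invoke the almost-sure coupling \eqref{agconvindex}: there exists a deterministic $n_0(\omega)$ past which $|n^{-1}L(g_k^\pm)-g_k^{\pm,\Pois}|<\eta_0/2$ uniformly over $|k|\leq K+1$. Combined with the spacing bound above, this also forces $\ex(k)=\ex^{\Pois}(k)$ for $|k|\leq K$ once $n$ is large, so the finitely many prelimit quantities $n^{-1}L(g_k^\pm),\,n^{-1}L(\gtt{k})$ inherit the strict Poisson inequalities up to a loss of $\eta_0/2$. Setting $\eta:=\eta_0/2$ and intersecting with $A_n^{\delta,K}$ (which has probability $\geq 1-\varepsilon/4$ for $n$ large by Proposition~\ref{prop:Awhp}), a union bound yields \eqref{eq:Ewhp}. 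The step I expect to need the most care is verifying the continuity of the map $\omega\mapsto\ex(k)$ under the coupling, but this is readily handled by the strict separation \eqref{gsep} of the limiting record values, which makes the infimum defining $\ex^{\Pois}(k)$ stable under the $\eta_0$-sized perturbations supplied by \eqref{agconvindex}.
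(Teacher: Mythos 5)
Your proposal is correct and follows essentially the same strategy as the paper's proof, merely with the bookkeeping shifted to the Poisson side of the coupling: the paper writes $E_n^{\delta,\eta,K}=E_n^{\delta,\eta,K}(1)\cap E_n^{\delta,\eta,K}(2)\cap E_n^{\delta,\eta,K}(3)\cap A_n^{\delta,K}$, applies a union bound over the complements, and observes that $E_n^{\delta,\eta,K}(3)^c\cap\tilde E_n^{\delta,\eta,K}(3)^c$ is negligible, which is precisely your dichotomy $g_{\kappa^+}^{+,\Pois}\lessgtr g_{\kappa^-}^{-,\Pois}$ comparing the first record value to exceed the threshold on each side; your choices of $\delta$, $K$, $\eta_0$ correspond to the paper's control of $\mathbf{P}(E_n^{\delta,\eta,K}(1)^c)$ via \eqref{gto0} and \eqref{adiv} and of $\mathbf{P}(E_n^{\delta,\eta,K}(2)^c)$ via atomlessness of the Poisson limit. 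Two small points to tidy: (i) you should intersect with $A_n^{\delta,K}\cap\tilde A_n^{\delta,K}$ rather than $A_n^{\delta,K}$ alone, since when the dichotomy selects the negative-side witness it is membership of $\tilde A_n^{\delta,K}$ that $\tilde E_n^{\delta,\eta,K}$ demands (both events tend to full probability, so this changes nothing substantive); (ii) the blanket claim that $\ex(k)=\ex^{\Pois}(k)$ for all $|k|\leq K$ is a bit too strong, as $\ex^{\Pois}(k)$ may exceed $K+1$ for some $k$, outside the reach of your spacing estimate, but you only need the identity at $k=\kappa^\pm$, and there $\ex^{\Pois}(\kappa^+)=\kappa^-\leq K$ (and symmetrically), so the required stability under $\eta_0/2$-perturbations does hold.
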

\begin{proof}
Observe that we can write $E_n^{\delta,\eta,K}$ as $E_n^{\delta,\eta,K}(1)\cap E_n^{\delta,\eta,K}(2)\cap E_n^{\delta,\eta,K}(3)\cap A_n^{\delta,K}$, where
\begin{align*}
E_n^{\delta,\eta,K}(1)&:=\left\{n^{-1}L(g_K^+)\geq t+\eta,\:n^{-1}L(g_0^+)\leq t-\eta\right\},\\
E_n^{\delta,\eta,K}(2)&:=\left\{n^{-1}L(g_k^+)\not\in(t-\eta,t+\eta)\mbox{ for any }k\in \{1,\dots,K-1\}\right\},\\
E_n^{\delta,\eta,K}(3)&:=\left\{m_{n,-}(m_{n,-}^{-1}(t-\eta))> m_{n,+}(m_{n,+}^{-1}(t-\eta))\right\}.
\end{align*}
(Note in particular that, in combination with the other events, the event $E_n^{\delta,\eta,K}(3)$ is equivalent to $n^{-1}L(\gtt{k})\leq t-\eta$, where $k$ is the unique index in $\{1,\dots,K\}$ such that $n^{-1}L(g_{k-1}^+)\leq t-\eta\leq t+\eta\leq n^{-1}L(g_k^+)$.) Analogously, reversing the roles of the positive and negative axes, we can write $\tilde{E}_n^{\delta,\eta,K}= \tilde{E}_n^{\delta,\eta,K}(1)\cap \tilde{E}_n^{\delta,\eta,K}(2)\cap\tilde{E}_n^{\delta,\eta,K}(3) \cap \tilde{A}_n^{\delta,K}$.

To prove \eqref{eq:Ewhp}, we consider the bound:
\begin{align*}
\mathbf{P}\left(\left({E}_n^{\delta,\eta,K}\cup \tilde{E}_n^{\delta,\eta,K}\right)^c\right)&\leq \mathbf{P}\left(({A}_n^{\delta,K})^c\right)
+\mathbf{P}\left((\tilde{A}_n^{\delta,K})^c\right)
+\mathbf{P}\left({E}_n^{\delta,\eta,K}(1)^c\right)
+\mathbf{P}\left(\tilde{E}_n^{\delta,\eta,K}(1)^c\right)\\
&\quad+\mathbf{P}\left({E}_n^{\delta,\eta,K}(2)^c\right)
+\mathbf{P}\left(\tilde{E}_n^{\delta,\eta,K}(2)^c\right)
+\mathbf{P}\left({E}_n^{\delta,\eta,K}(3)^c\cap\tilde{E}_n^{\delta,\eta,K}(3)^c\right).
\end{align*}
Now, the final term here is clearly zero since ${E}_n^{\delta,\eta,K}(3)^c\cap\tilde{E}_n^{\delta,\eta,K}(3)^c$ is the empty set. Next, we have from \eqref{agconv} that
\[\limsup_{n\rightarrow\infty} \mathbf{P}\left({E}_n^{\delta,\eta,K}(1)^c\right)\leq\mathbf{P}\left(g_0^{+,\Pois}> t-\eta\right)+\mathbf{P}\left(g_K^{+,\Pois}< t+\eta\right),\]
which, by \eqref{adiv} and \eqref{gto0}, can be made arbitrarily small by taking $\delta$ small and $K$ large. We can deal with $\mathbf{P}(\tilde{E}_n^{\delta,\eta,K}(1)^c)$ similarly. For fixed $\delta$ and $K$, the terms $\mathbf{P}(({A}_n^{\delta,K})^c)$ and $\mathbf{P}((\tilde{A}_n^{\delta,K})^c)$ converge to zero as $n\rightarrow\infty$ by Proposition~\ref{prop:Awhp}. Finally, again for fixed $\delta$ and $K$, we have from \eqref{agconv} that
\[\limsup_{n\rightarrow\infty} \mathbf{P}\left({E}_n^{\delta,\eta,K}(2)^c\right)=
\mathbf{P}\left(g_k^{+,\Pois}\in[t-\eta,t+\eta]\:\mbox{ for some }k\in\{1,\dots,K-1\}\right).\]
That the right-hand side here converges to zero as $\eta\rightarrow 0$ is a straightforward consequence of the fact that, with probability one, the Poisson random measure $\zeta$ defined in Subsection~\ref{21} has no atoms at a fixed level $t$. Since the term $\mathbf{P}(\tilde{E}_n^{\delta,\eta,K}(2)^c)$ can be dealt with in the same way, this completes the proof.
\end{proof}

\subsection{Convergence of the invariant measure}\label{23}

In this subsection, we show that the measure defined by
\begin{align}\label{mundef}
  \mu_n^{\alpha,\lambda,\omega}:=\sum_{k\in\Z}\frac{1}{n}c^{\alpha,\lambda/n}(\omega_k)\delta_{\omega_k/n}
\end{align}
converges to $e^{2\lambda x}\mathbf{E}[c^{\alpha,0}(\omega_0)]\dd x$. The specific statement that we will need for later is as follows.

\begin{proposition}
  \label{prop:inv_meas}
  For any $t>0$ and $f\in C_b(\R)$,
  \begin{align*}
    \int_{m_{n,-}^{-1}(t)}^{m_{n,+}^{-1}(t)}f(x)\mu_n^{\alpha,\lambda,\omega}(\dd x)-
    \int_{m_{n,-}^{-1}(t)}^{m_{n,+}^{-1}(t)}e^{2\lambda x}f(x)\mathbf{E}[c^{\alpha,0}(\omega_0)]\dd x\xrightarrow{n\to\infty} 0
  \end{align*}
in $\mathbf{P}$-probability.
\end{proposition}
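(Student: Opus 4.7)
The strategy is to reduce the claim to a weighted law of large numbers for the drift-free conductances $c^{\alpha,0}$ and then handle the random endpoints $m_{n,\pm}^{-1}(t)$ by tightness. Starting from the algebraic decomposition
\[
c^{\alpha,\lambda/n}(\omega_k) = e^{2\lambda\omega_k/n}\sum_{j\neq k}e^{-|\omega_k-\omega_j|^\alpha}e^{\lambda(\omega_j-\omega_k)/n},
\]
I would show that the inner sum equals $c^{\alpha,0}(\omega_k)(1+o(1))$ uniformly over $k$ with $|\omega_k|\le Mn$, for any fixed $M>0$. The point is that only terms with $|\omega_j-\omega_k|\le (\log n)^{2/\alpha}$ contribute non-negligibly (otherwise $e^{-|\omega_k-\omega_j|^\alpha}\le n^{-\log n}$), and for those $j$ we have $|\lambda(\omega_j-\omega_k)/n|=O((\log n)^{2/\alpha}/n)\to 0$. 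Using the upper bound from \eqref{eq:pointwise} to control $c^{\alpha,0}(\omega_k)$, substituting $e^{2\lambda\omega_k/n}c^{\alpha,0}(\omega_k)$ for $c^{\alpha,\lambda/n}(\omega_k)$ in the left-hand integral of the statement changes it by $o(1)$ in $\mathbf{P}$-probability.

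\textbf{Paragraph 2: Weighted LLN for $c^{\alpha,0}$.} The core task is then to prove: for any $g\in C_b(\R)$ and fixed $a<b$,
\[
\frac{1}{n}\sum_{k\colon \omega_k/n\in[a,b]}c^{\alpha,0}(\omega_k)\, g(\omega_k/n) \xrightarrow{n\to\infty} \mathbf{E}[c^{\alpha,0}(\omega_0)]\int_a^b g(x)\,\dd x
\]
in $\mathbf{P}$-probability. The expected value matches the right-hand side by the Campbell--Mecke formula and Slivnyak's theorem, which yield $\mathbf{E}_x[c^{\alpha,0}(x)]=\mathbf{E}[c^{\alpha,0}(\omega_0)]$ at each Palm point $x$. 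For the variance, I would approximate $c^{\alpha,0}(\omega_k)$ by the truncation $c_{R_n}^{\alpha,0}(\omega_k):=\sum_{j\neq k,\,|\omega_j-\omega_k|\le R_n} e^{-|\omega_k-\omega_j|^\alpha}$ with $R_n:=(\log n)^{2/\alpha}$, which depends only on $\omega$ inside a window of width $2R_n$ around $\omega_k$. Covariances between terms with disjoint windows then vanish, and Campbell's formula bounds the expected number of $(k,l)$ pairs with overlapping windows by $O(R_n n)$; each such pairwise covariance is uniformly bounded, giving $\mathrm{Var}(Z_n)=O(R_n/n)\to 0$. This is presumably the content of the Lemma~\ref{lem:quantitativeLLN} alluded to in the proof of Proposition~\ref{prop:Awhp}.

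\textbf{Paragraph 3: Random endpoints and main obstacle.} To pass from a deterministic $[a,b]$ to the random interval $I_n:=[m_{n,-}^{-1}(t), m_{n,+}^{-1}(t)]$, I would combine tightness of the pair from Theorem~\ref{thm:mainnew2}(b) (which forces $I_n\subseteq[-M,M]$ on a set of probability $\ge 1-\varepsilon$) with a Skorohod coupling, along a subsequence if necessary, to assume that $I_n\to I:=[m_-^{-1}(t), m_+^{-1}(t)]$ almost surely; subsequential a.s.\ convergence is sufficient since the target is a deterministic convergence in probability. Sandwiching $\mathbf{1}_{I_n}(x)e^{2\lambda x}f(x)$ between continuous functions supported on an $\varepsilon$-enlargement and an $\varepsilon$-shrinkage of $I$ and applying Step~2 to both bounds reduces the discrepancy on both sides to at most $\mathbf{E}[c^{\alpha,0}(\omega_0)]\cdot 2\varepsilon\cdot\|f\|_\infty$, which is arbitrarily small. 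The main obstacle is the variance bound in Step~2: since $c^{\alpha,0}$ is an unbounded nonlinear functional of the Poisson environment, one must precisely quantify its effective range of dependence before the standard mean/variance machinery applies. Once that is in hand, the remainder of the proof is a routine combination of Campbell--Mecke, tightness, and approximation.
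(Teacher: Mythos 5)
Your outline mirrors the paper's at a high level — factor out the drift $e^{2\lambda\omega_k/n}$, replace the remaining ``stationary part'' by the drift-free conductances $c^{\alpha,0}(\omega_k)$ at $o(1)$ cost, prove a law of large numbers, then handle the random endpoints — but the two middle steps are carried out quite differently. For the LLN, the paper's Lemma~\ref{lem:quantitativeLLN} is a direct application of Birkhoff's ergodic theorem to the stationary ergodic sequence $(c^{\alpha,0}(\omega_k))_{k\in\Z}$ over deterministic intervals; the weighted version with a test function $f_\lambda$ then follows in the proof of Proposition~\ref{prop:inv_meas} by partitioning the interval into $O(N)$ small pieces and controlling the oscillation of $f_\lambda$ on each. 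No Campbell--Mecke formula, truncation at $R_n$, or variance computation appears. Since only convergence in probability is required, the second-moment method you propose (Palm mean via Slivnyak, truncated effective range, covariance count $O(R_n n)$) is sound and would even yield a rate, but it is heavier than necessary, and it is \emph{not} the content of Lemma~\ref{lem:quantitativeLLN} as you guessed. For the random endpoints, rather than a Skorohod coupling and sandwiching by (random) continuous functions, the paper uses that by \eqref{agconv} together with \eqref{aok}--\eqref{gto0}, with arbitrarily high probability $m_{n,\pm}^{-1}(t)$ take values in a finite candidate set $\{a_i^\pm/n : 0\le i \le K\}\subset[\pm M^{-1},\pm M]$; it then proves the statement for each fixed index pair $(i,j)$ and takes a union bound. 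This finite-set reduction cleanly avoids the subtlety in your sandwiching argument, where the $\varepsilon$-enlargement/shrinkage of the limiting interval $I$ is itself random and therefore cannot be fed directly into a deterministic LLN without an additional discretisation step. A smaller point: in your Paragraph~1 you invoke \eqref{eq:pointwise} to control $c^{\alpha,0}(\omega_k)$, but that bound is one of the defining properties of $A_n^{\delta,K}$, and Proposition~\ref{prop:inv_meas} is not stated or proved on that event; the paper instead works under the moderate-deviation event \eqref{eq:MDP}, which it uses both to bound the number of nearby Poisson points (of order $n^{2/3}$) and to control the separation of far points, splitting at $|\omega_l-\omega_k|\lessgtr n^{1/4}$ rather than your $(\log n)^{2/\alpha}$. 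The exponential-decay idea is the same, but the specific citation is out of place.
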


Towards checking this, we start with the following simple result in the case $\lambda=0$.

\begin{lemma}
  \label{lem:quantitativeLLN}
  For any real numbers $a<b$,
  \begin{align*}
    \mu^{\alpha,0,\omega}_n([a,b])-\mathbf{E}[c^{\alpha,0}(\omega_0)](b-a)\xrightarrow{n\to\infty} 0
  \end{align*}
in $\mathbf{P}$-probability.
\end{lemma}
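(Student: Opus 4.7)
The plan is to prove the lemma by computing the mean and bounding the variance of $\mu^{\alpha,0,\omega}_n([a,b])$, then applying Chebyshev's inequality. Rewriting
\[
\mu^{\alpha,0,\omega}_n([a,b]) = \frac{1}{n}\sum_{k\in\Z:\,\omega_k\in[na,nb]} c^{\alpha,0}(\omega_k),
\]
the contribution from $k=0$ (if $0\in[a,b]$) is $O(n^{-1})$ and hence negligible. By Slivnyak's theorem, under $\mathbf{P}$ the remaining atoms $\{\omega_k\}_{k\neq 0}$ form a stationary unit-intensity Poisson process on $\R$, so ordinary Campbell-type computations can be applied directly to the main sum.

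For the mean, combining Campbell's formula with Slivnyak's theorem yields
\[
\mathbf{E}\Bigl[\sum_{k\neq0:\,\omega_k\in[na,nb]} c^{\alpha,0}(\omega_k)\Bigr] = n(b-a)\,\mathbf{E}[c^{\alpha,0}(\omega_0)] + O(1),
\]
where the $O(1)$ boundary term arises from the extra interactions of each $\omega_k$ with the atom at the origin and integrates to a finite quantity thanks to the decay of $e^{-|x|^\alpha}$. Dividing by $n$ yields the desired limit for the mean.

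For the variance, the key point is the super-exponential decay of $e^{-|x|^\alpha}$, which permits truncation to a finite range. For $M\geq 1$, define
\[
c^{\alpha,0}_M(\omega_k) := \sum_{j:\,0<|\omega_j-\omega_k|\leq M} e^{-|\omega_j-\omega_k|^\alpha},
\]
and let $\mu^M_n([a,b])$ be the corresponding truncated analogue of $\mu^{\alpha,0,\omega}_n([a,b])$. The $L^1$-error is controlled by $\mathbf{E}[c^{\alpha,0}(\omega_0)-c^{\alpha,0}_M(\omega_0)] \leq \int_{|z|>M}e^{-|z|^\alpha}\,dz$, which can be made arbitrarily small uniformly in $n$ by choosing $M$ large. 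On the other hand, for fixed $M$ the truncated summand $c^{\alpha,0}_M(\omega_k)$ depends only on the Poisson process in an $M$-neighbourhood of $\omega_k$. Partitioning $[na,nb]$ into blocks of length $M$ decomposes $\mu^M_n([a,b])$ into a sum of scaled block sums that are independent whenever separated by more than one block, and a direct second-moment computation then yields $\mathrm{Var}(\mu^M_n([a,b]))\to 0$ as $n\to\infty$ for each fixed $M$.

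Combining the three ingredients via a triangle inequality (first fix $M$ large so that the $L^1$-truncation error and corresponding mean-correction are at most $\varepsilon$, then let $n\to\infty$ and apply Chebyshev's inequality to the centred truncated sum) completes the argument. The main obstacle is the bookkeeping around the Palm conditioning at the origin together with the block-variance bound for the truncated sum; while neither step is deep, both require a clean application of Slivnyak's theorem and a disciplined separation of the local (truncated) and long-range (negligible) contributions.
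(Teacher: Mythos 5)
Your proof is correct, but it takes a genuinely different route from the paper. The paper's argument is shorter and more abstract: it observes that $\bigl(c^{\alpha,0}(\omega_k)\bigr)_{k\in\mathbb{Z}}$ is a stationary ergodic sequence under the Palm measure (since it is a shift-covariant functional of the i.i.d.\ spacings), applies the Birkhoff ergodic theorem to the index-indexed empirical mean $\frac{1}{n}\sum_{k\in[an,bn]}c^{\alpha,0}(\omega_k)$, and then passes from the index window $[an,bn]$ to the spatial window $[na,nb]$ via $\omega_{\lfloor an\rfloor}/n\to a$. You instead run a second-moment argument directly on the spatially-windowed sum: compute the mean via Slivnyak plus Campbell, truncate the conductance to finite range $M$ to obtain an $m$-dependent block structure, bound the variance of the truncated sum by $O(1/n)$ for fixed $M$, and then let $M\to\infty$ to control the truncation error in $L^1$. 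Your approach is more elementary (no appeal to ergodic theory), Poisson-specific, and in principle quantitative; the paper's is shorter and generalizes immediately to other stationary ergodic point processes. Two small bookkeeping points you should tidy: the block sums $B_\ell$ depend on a $3M$-window, so independence requires separation by at least three blocks of size $M$ (not ``more than one''), which does not change the conclusion; and the $O(1)$ boundary correction in your mean computation should be separated into the contribution from the Palm atom at $0$ (which integrates to $\int_{na}^{nb}e^{-|x|^\alpha}\,dx=O(1)$) and the exact Mecke-formula identity for the remaining pair sum, which in fact gives $n(b-a)\,\mathbf{E}[c^{\alpha,0}(\omega_0)]$ with no error term at all.
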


\begin{proof}
Note first that
\begin{align*}
  \mu^{\alpha,0,\omega}_n([\omega_{an}/n,\omega_{bn}/n])=\sum_{k\in [an,bn]}\frac{1}{n}c^{\alpha,0}(\omega_k)
\end{align*}
is an empirical mean of the stationary and ergodic sequence $(c^{\alpha,0}(\omega_k))_{k\in\mathbb{Z}}$, and hence it converges to the constant $(b-a)\mathbf{E}[c^{\alpha,0}(\omega_0)]$ as $n\to\infty$, $\mathbf{P}$-almost surely. Next, since $\lim_{n\to\infty}\omega_{an}/n= a$ and $\lim_{n\to\infty}\omega_{bn}/n=b$, $\mathbf{P}$-almost surely, it readily follows that
\begin{align*}
\mu^{\alpha,0,\omega}_n([a,b])-\mu^{\alpha,0,\omega}_n([\omega_{an}/n,\omega_{bn}/n])\xrightarrow{n\to\infty} 0
\end{align*}
in $\mathbf{P}$-probability.
\end{proof}

\begin{proof}[Proof of Proposition~\ref{prop:inv_meas}]
For convenience, in this proof, we will assume that $\lambda>0$. The proof for $\lambda\leq 0$ follows the same argument. Observing that
\begin{align*}
c^{\alpha,\lambda/n}(\omega_k)=e^{2\lambda\omega_k/n}\sum_{l\in\Z\setminus\{k\}}e^{\lambda(\omega_l-\omega_k)/n-|\omega_l-\omega_k|^\alpha},
\end{align*}
we define the `stationary part' of the invariant measure to be
\begin{align*}
\tilde{\mu}^{\alpha,\lambda,\omega}_n:=\sum_{k\in\Z}\frac{1}{n}\delta_{\omega_k/n}\sum_{l\in\Z\setminus\{k\}}e^{\lambda(\omega_l-\omega_k)/n-|\omega_l-\omega_k|^\alpha}.
\end{align*}
With this notation, we have $\int f(x)  \mu^{\alpha,\lambda,\omega}_n(\dd x)=\int e^{2\lambda x}f(x)  \tilde{\mu}^{\alpha,\lambda,\omega}_n(\dd x)$.

Since $m_{n,\pm}$ converge to the processes $m_\pm$ whose records satisfy \eqref{aok}--\eqref{gto0}, the event that
\begin{align*}
m_{n,+}^{-1}(t)&\in \{a_i^+/n\colon 0\le i \le K\} \subseteq [M^{-1}, M],\\
m_{n,-}^{-1}(t)&\in \{a_j^-/n\colon 0\le j \le K\} \subseteq [-M,-M^{-1}]
\end{align*}
is readily checked to hold with arbitrarily high probability, uniformly in $n$, when $\delta$ is chosen suitably small and $K$ and $M$ suitably large. Thus it suffices to show that, for any fixed $0\le i,j \le K$ and $f\in C_b([-M,M])$,
\begin{align}
  \label{eq:meas_goal}
  \int_{a_j^-/n}^{a_i^+/n}e^{2\lambda x}f(x)\tilde{\mu}_n^{\alpha,\lambda,\omega}(\dd x)-
  \int_{a_j^-/n}^{a_i^+/n}e^{2\lambda x}f(x)\mathbf{E}[c^{\alpha,0}(\omega_0)]\dd x \xrightarrow{n\to\infty} 0
\end{align}
in $\mathbf{P}$-probability. We will first show that $\tilde\mu_n^{\alpha,\lambda,\omega}$ can be replaced by $\mu_n^{\alpha,0,\omega}$, see \eqref{eq:inv_meas1st} below. Once that is achieved, the result follows by dividing $[a_j^-/n,a_i^+/n]$ into small intervals and applying Lemma~\ref{lem:quantitativeLLN} to each interval separately. To ease notation, we write $f_\lambda(x)=e^{2\lambda x}f(x)$, which is continuous and bounded on $[-M,M]$.

Towards the first aim, we take $\theta\in (0,1)$ and use the bound: for $j\in\mathbb{N}$ and $J>0$,
\begin{align*}
  \mathbf{P}(|\omega_j-j|>J)
  &= \mathbf{P}(\omega_j-j>J)+\mathbf{P}(\omega_j-j<-J)\\
  &\le e^{-\theta(j+J)}\mathbf{E}[e^{\theta\omega_1}]^j+e^{\theta(j-J)}\mathbf{E}[e^{-\theta\omega_1}]^j\\
  &\le e^{-\theta(j+J)}\left(\frac{1}{1-\theta}\right)^j+e^{\theta(j-J)}\left(\frac{1}{1+\theta}\right)^j.
\end{align*}
It follows that
\[\mathbf{P}(|\omega_j-j|>J) \le
  \begin{cases}
    \exp\{-cn^{1/3}\}, & \text{when $0\leq j \le 2Mn$ and $J=n^{2/3}$,} \\
    \exp\{-cj\}, & \text{when $j > 2Mn$ and $J=\frac{j}{4}$,}
  \end{cases}\]
where we have taken $\theta=n^{-1/3}/M^2$ in the first case and $\theta=\frac{1}{10}$ in the second. Similar bounds hold when $j\leq 0$. Combining these with a union bound, we can show that the event
\begin{align}
\label{eq:MDP}
\bigcap_{j\in[-2Mn, 2Mn]}\{|\omega_j-j| \le n^{2/3}\}\cap \bigcap_{j\not\in[-2Mn, 2Mn]}\left\{|\omega_j-j| \le \frac{j}{4}\right\}
\end{align}
has probability larger than $1-\exp(-cn^{1/3})$. In what follows, we consider $\omega$ in this set. Note that the second condition ensures $\omega_j \not\in [-Mn,Mn]$ for all $j\not\in[-2Mn, 2Mn]$ and thus these indices may be dropped from consideration. We first estimate
\begin{equation}
  \label{eq:mu(f)}
\begin{split}
  &\left|\int_{a_j^-/n}^{a_i^+/n} f_\lambda(x)\tilde{\mu}_n^{\alpha,\lambda,\omega}(\dd x)
  -\int_{a_j^-/n}^{a_i^+/n} f_\lambda(x)\mu_n^{\alpha,0,\omega}(\dd x)\right|\\
  &\quad \le \sum_{k:\:\omega_k\in[-Mn,Mn]}\frac{1}{n}|f_\lambda(\omega_k/n)|\sum_{l\in\Z\setminus\{k\}}\left(e^{\lambda|\omega_l-\omega_k|/n}-1\right)e^{-|\omega_l-\omega_k|^\alpha}.
\end{split}
\end{equation}
We divide the inner sum into two pieces and bound these separately. First, applying \eqref{eq:MDP},
\[\sum_{l\colon 0<|\omega_l-\omega_k|<n^{1/4}}\left(e^{\lambda|\omega_l-\omega_k|/n}-1\right)e^{-|\omega_l-\omega_k|^\alpha}
  \le c n^{2/3} \left(e^{\lambda n^{-3/4}}-1\right) \le c n^{-1/12}.\]
Second, if $\omega_k\in[-Mn,Mn]$ and $l\not\in[-2Mn, 2Mn]$, then on \eqref{eq:MDP} it holds that
\[\left|\omega_l-\omega_k\right|\geq  \frac{3|l|}{4}-|\omega_k|\geq \frac{|l|}{4},\]
and so
\begin{align*}
\sum_{l\colon |\omega_l-\omega_k|\ge n^{1/4}}\left(e^{\lambda|\omega_l-\omega_k|/n}-1\right)e^{-|\omega_l-\omega_k|^\alpha}
&\le \sum_{l\colon |\omega_l-\omega_k|\ge n^{1/4}}e^{-(1-\lambda/n)|\omega_l-\omega_k|}\\
&\le Cn e^{-(1-\lambda/n)n^{1/4}}+2\sum_{l\geq 2Mn}e^{-(1-\lambda/n)l/4}\\
&\leq C e^{-cn^{1/4}}.
\end{align*}
Substituting these bounds into \eqref{eq:mu(f)} and noting that \eqref{eq:MDP} implies $\#\{k\colon -Mn \le \omega_k \le Mn\} \le 4M n$ for sufficiently large $n$, we obtain
\begin{align}
  \label{eq:inv_meas1st}
  \left|\int_{a_j^-/n}^{a_i^+/n} f_\lambda(x)\tilde{\mu}_n^{\alpha,\lambda,\omega}(\dd x)
  -\int_{a_j^-/n}^{a_i^+/n} f_\lambda(x)\mu_n^{\alpha,0,\omega}(\dd x)\right| \le c n^{-1/12}.
\end{align}

For the next step, we introduce $\osc(f;I) := \sup_{x,y\in I}|f(x)-f(y)|$. By the uniform continuity of $f_\lambda\in C_b[-M,M]$, we can make $\max\{\osc(f_\lambda;[\tfrac{l}{N}, \tfrac{l+1}{N}])\colon -MN\le l \le MN-1\}$ as small as we wish by setting $N$ large. Let
\[\mathcal L:=\left\{l\in\Z\colon [\tfrac{l}{N}, \tfrac{l+1}{N}]\subseteq [a_j^-/n,a_i^+/n]\right\},\quad \partial \mathcal L:=\left\{l\in\Z\colon [\tfrac{l}{N}, \tfrac{l+1}{N}]\cap \{a_j^-/n,a_i^+/n\}\neq \emptyset\right\}.\]
With this notation, we obtain
\begin{align*}
  \lefteqn{\left|\int_{a_j^-/n}^{a_i^+/n}f_\lambda(x)\mu_n^{\alpha,0,\omega}(\dd x)-
  \int_{a_j^-/n}^{a_i^+/n}f_\lambda(x)\mathbf{E}[c^{\alpha,0}(\omega_0)]\dd x\right|}\\
  &\le \sum_{l\in\mathcal L} \osc(f_\lambda;[\tfrac{l}{N}, \tfrac{l+1}{N}])\left(\mu_n^{\alpha,0,\omega}([\tfrac{l}{N}, \tfrac{l+1}{N}])+\mathbf{E}[c^{\alpha,0}(\omega_0)]\tfrac{1}{N}\right)\\
  &\quad+\sum_{l\in\mathcal L} \|f_\lambda\|_\infty\left|\mu_n^{\alpha,0,\omega}([\tfrac{l}{N}, \tfrac{l+1}{N}])-\mathbf{E}[c^{\alpha,0}(\omega_0)]\tfrac{1}{N}\right|\\
  &\quad+ \sum_{l\in\partial\mathcal L} \|f_\lambda\|_\infty\left(\mu_n^{\alpha,0,\omega}([\tfrac{l}{N}, \tfrac{l+1}{N}])+\mathbf{E}[c^{\alpha,0}(\omega_0)]\tfrac{1}{N}\right).
\end{align*}
For any $N\in\N$, Lemma~\ref{lem:quantitativeLLN} ensures that
\begin{align*}
  \max\left\{\left|\mu_n^{\alpha,0,\omega}([\tfrac{l}{N}, \tfrac{l+1}{N}])-\mathbf{E}[c^{\alpha,0}(\omega_0)]\tfrac{1}{N}\right| \colon -MN \le l \le MN-1 \right\}\xrightarrow{n\to\infty} 0
\end{align*}
in $\mathbf{P}$-probability. Substituting this into the preceding bound, we obtain
\begin{align}
  \label{eq:inv_meas2nd}
  \int_{a_j^-/n}^{a_i^+/n}f_\lambda(x)\mu_n^{\alpha,0,\omega}(\dd x)-
  \int_{a_j^-/n}^{a_i^+/n}f_\lambda(x)\mathbf{E}[c^{\alpha,0}(\omega_0)]\dd x
  \xrightarrow{n\to\infty}0
\end{align}
in $\mathbf{P}$-probability. Combining \eqref{eq:inv_meas1st} and \eqref{eq:inv_meas2nd} yields \eqref{eq:meas_goal}, as desired.
\end{proof}

\section{Barrier crossing time estimates}\label{sec:rw}

In this section, we study the time taken by the Mott random walk to cross resistance barriers. One technical complication compared to the sketch provided in Section~\ref{sec:intro} is that we need to take the non-nearest-neighbour edges into account. In this general setting, the effective resistance  between two sets $A,B\subseteq\omega$ is defined by
\begin{align}\label{eq:def_Reff}
R_\mathrm{eff}(A,B)^{-1}:=\inf\left\{\mathcal E(f,f)\:\vline\:f\colon\omega\to[0,1],f|_A\equiv 0,f|_B\equiv 1 \right\},
\end{align}
where the energy of a function $f\colon\omega\to[0,1]$ is given by
\begin{align}\label{eq:def_E}
\mathcal E(f,f):=\sum_{i\neq j\in\Z}c(\omega_i,\omega_j)(f(\omega_i)-f(\omega_j))^2.
\end{align}
We also abbreviate $R(a,b):=R(\{a\},\{b\})$ in the case where $A$ and $B$ are singletons.

The constants $\alpha$, $\lambda$, $\delta$  and $K$ will be fixed throughout, and we recall the event $A_{n}^{\delta,K}$ from Definition~\ref{def:A}. Moreover, we recall the notation for barrier locations of $(a_i^+)_{i\geq 0}$ from \eqref{adef}, and also introduce
\[b_i^+:=a_i^++1,\qquad i\geq 0,\]
so that $\{\omega_{a_i^+},\omega_{b_i^+}\}$, $i\geq0$, represent edges of record resistance in the environment. We define corresponding exceedance times by setting, for $i\geq 0$,
\begin{align}
\alpha_i^+&:=\inf\left\{t\geq 0:X_t\geq \omega_{a_i^+}\right\},\label{eq:alphadef}\\
\beta_i^+&:=\inf\left\{t\geq 0:X_t\geq \omega_{b_i^+}\right\}.\label{eq:betadef}
\end{align}
The main result of this section is the following, which shows that, up to powers of $\ell_1(n)$, these exceedance times concentrate on environment-measurable quantities. Note that, from the definition at \eqref{gdef}, we have $g_i^+:=r^0(\omega_{a_i^+},\omega_{b_i^+})$.

\begin{theorem}\label{thm:annealed}
For each $\alpha>1$, $\lambda\in\mathbb{R}$, $K\in \mathbb{N}$ and $\delta>0$, as $n\rightarrow\infty$,
\begin{align}
\sup_{\omega\in A_n^{\delta,K}}\sup_{k=1,\dots, K} P_\omega^{\alpha,\lambda/n}\left(\alpha_k^+\geq (g_{k-1}^++\gtt{k}) n\ell_1(n)^4\right)\rightarrow0,\label{eq:not_too_fast}\\
\sup_{\omega\in A_n^{\delta,K}}\sup_{k=1,\dots, K} P_\omega^{\alpha,\lambda/n}\left(\beta_k^+\leq \frac{ng_k^+}{\ell_1(n)^{16}}\right)\rightarrow0.\label{eq:not_too_slow}
\end{align}
\end{theorem}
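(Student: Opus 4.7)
My plan is to derive both estimates from the electrical-network interpretation of the Mott walk together with the structural bounds collected in Definition~\ref{def:A}.

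For \eqref{eq:not_too_fast}, I would first observe $\alpha_k^+\le \tau_{\omega_{a_k^+}}$ and apply Markov's inequality, so it suffices to bound $E^{\alpha,\lambda/n}_\omega[\tau_{\omega_{a_k^+}}]$ by a constant times $ng_{k-1}^+\ell_1(n)^c$ for some $c<4$. The main tool is the commute time identity applied to the Mott walk on the large finite truncation $W=\{\omega_j:-n\ell_3(n)\le j\le b_k^+\}$ with reflection at the boundary: reversibility then gives
\[
E^W_\omega[\tau_{\omega_{a_k^+}}]\le \mu(W)\,R_{\mathrm{eff}}(0,\omega_{a_k^+}),
\]
with the effective resistance computed inside $W$. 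By the series law, $R_{\mathrm{eff}}(0,\omega_{a_k^+})\le \sum_{l=1}^{a_k^+}r^{\alpha,\lambda/n}(\omega_{l-1},\omega_l)$; after absorbing the drift factor $e^{\lambda(\omega_i+\omega_j)/n}$ via \eqref{eq:space_scaling}, and applying \eqref{eq:NN+}, this is at most $\ell_1(n)^{c_1}Ng_{k-1}^+$ for some $c_1<1$, while $\mu(W)\le Cn\ell_1(n)^{c_2}$ follows from \eqref{eq:mass_control} together with \eqref{eq:pointwise} (which controls the extra atoms outside $[\omega_{\at{k}},\omega_{a_k^+}]$). A short coupling argument, exploiting \eqref{eq:space_scaling} once more to show that the walk does not exit $W$ through its left boundary before $\tau_{\omega_{a_k^+}}$ with overwhelming probability, transfers the bound back to the original walk.

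For \eqref{eq:not_too_slow}, set $A=\{\omega_l:\omega_l\ge \omega_{b_k^+}\}$ and let $N_T$ denote the number of jumps of $X$ into $A$ during $[0,T]$. The Dynkin compensation formula gives
\[
P^{\alpha,\lambda/n}_\omega(\beta_k^+\le T)\le E^{\alpha,\lambda/n}_\omega[N_T]=\int_0^T E_\omega[p(X_s)]\,ds,\qquad p(x):=\sum_{l\in A}\frac{c(x,\omega_l)}{c(x)},
\]
and we want this to tend to zero at $T=ng_k^+/\ell_1(n)^{16}$. The key quantitative input is a cutset/mass estimate: writing $V=\{\omega_j:\at{k}\le j\le a_k^+\}$,
\[
\frac{1}{\mu(V)}\sum_{x\in V}c(x)p(x)=\frac{1}{\mu(V)}\sum_{x\in V,\,l\in A}c(x,\omega_l)\le \frac{C}{g_k^+\mu(V)}\le \frac{C}{ng_k^+},
\]
where the dominant cutset contribution is the direct edge $c(\omega_{a_k^+},\omega_{b_k^+})=1/g_k^+$ --- all other pairs are exponentially smaller in gap size due to the superexponential decay of $c(\cdot,\cdot)$ --- and the final inequality uses \eqref{eq:mass_control}. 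Provided $E[p(X_s)]$ can be replaced by a constant multiple of this stationary rate, integration yields the target bound $C/\ell_1(n)^{16}$.

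The hard part is precisely this time-averaging step: while the stationary average of $p$ on $V$ is small, $p(X_s)$ need not be for every $s$. I would justify the replacement through a spectral-gap argument on the chain restricted to $V$, whose relaxation time is bounded by a constant multiple of $\mu(V)R_{\mathrm{eff}}(\omega_{\at{k}},\omega_{a_k^+})\lesssim n(g_{k-1}^++\gtt{k})$; by \eqref{eq:separation} this is smaller than $T$ by a factor growing faster than any polylog, so $X$ is effectively equilibrated in $V$ long before a crossing of the barrier becomes likely. Additional care is required for the long-range jumps, both in the cutset sum and in the relaxation-time estimate, and for the drift factors $e^{\lambda\omega_j/n}$, but both are uniformly tamed by the superexponential decay of $c(\cdot,\cdot)$ together with \eqref{eq:space_scaling}, and so these should amount to routine book-keeping rather than conceptual obstacles.
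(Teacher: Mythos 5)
Your approach to \eqref{eq:not_too_fast} is essentially the paper's: a commute-time bound (Lemma~\ref{l2}) combined with an escape-probability estimate to ensure the walk does not leave the relevant region to the left (Lemma~\ref{lem:l1}). One caveat: you invoke \eqref{eq:space_scaling} to rule out exit through the left boundary, but that estimate only controls \emph{where} the records are, not \emph{whether} the walk crosses them. What is really needed is the resistance-ratio argument of Lemma~\ref{lem:l1}: exiting on the left requires crossing the edge $\{\omega_{\bt{k}},\omega_{\at{k}}\}$ of resistance at least $g_k^+$, whereas the resistance to $\omega_{a_k^+}$ is only $O\bigl((g_{k-1}^+ + \gtt{k})\ell_1(n)\bigr)$; the ratio is super-polynomially small by \eqref{eq:separation}. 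The paper hard-wires this into the construction of the auxiliary graph $\mathcal{G}_k^+$ by truncating at exactly that edge, which is more than a convenience.

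For \eqref{eq:not_too_slow} your route --- a Dynkin/compensator identity, a cutset bound on the stationary average of the jump probability $p$, and a spectral-gap argument to replace $E[p(X_s)]$ by $\int p\,d\pi$ --- is genuinely different from the paper's excursion decomposition (Lemmas~\ref{lem:geom} and~\ref{lem:exc} plus binomial concentration), and you correctly identify the time-averaging step as the hard part. Unfortunately, as sketched it fails. Integrating the $L^2$-contraction bound $|E[p(X_s)]-\int p\,d\pi|\le \|p\|_{L^2(\pi)}\pi(0)^{-1/2}e^{-s\lambda_2}$ produces an error term of order $\|p\|_{L^2(\pi)}\,\pi(0)^{-1/2}\,\lambda_2^{-1}$. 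Even with the optimistic estimate $\|p\|_{L^2(\pi)}\approx (\sqrt{n}\,g_k^+)^{-1}$ (which already requires $c^{\alpha,\lambda/n}(\omega_{a_k^+})=O(1)$, a fact not encoded in $A_n^{\delta,K}$, whose pointwise bound \eqref{eq:pointwise} only gives $e^{-\log^{\alpha+1}n}$), together with $\pi(0)^{-1/2}\gtrsim \sqrt{n}$ and $\lambda_2^{-1}\gtrsim n(g_{k-1}^++\gtt{k})$, you obtain an error of order $n\,g_{k-1}^+/g_k^+$, up to polylog factors. By \eqref{eq:separation} this is $\approx n\,e^{-\log^{\alpha-1}n/\ell_2(n)}$, which tends to zero only when $\alpha>2$; for $\alpha\in(1,2]$ the exponent $\log^{\alpha-1}n/\ell_2(n)$ fails to dominate $\log n$, and the error diverges. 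Using only the bounds actually available on $A_n^{\delta,K}$, the situation is worse by a factor of $e^{\log^{\alpha+1}n/2}$. The separation between record sizes is thus strong enough to make $T\gg t_{\mathrm{mix}}$, but not strong enough to pay for the prefactor $\pi(0)^{-1/2}$ inherent to starting from a point mass. The paper's excursion argument sidesteps this entirely: the number of round trips between $\omega_{\at{k}}$ and $\omega_{a_k^+}$ before a barrier crossing is geometric with small success probability (a resistance-ratio fact, Lemma~\ref{lem:geom}), and each round trip takes macroscopic time with not-too-small probability (a commute-time and trace-process fact, Lemma~\ref{lem:exc}), so no initial-condition-dependent constants intervene.
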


\subsection{An auxiliary random walk}\label{sec31}

Towards proving Theorem~\ref{thm:annealed}, it will be helpful to introduce, for each $n$ and $k$, a finite graph $\mathcal{G}_k^+\subseteq\omega$. Intuitively, we want the behavior of the Mott random walk on $\mathcal G_k^+$ to be the same as on $\omega$ until at least time $\beta_k^+$. We therefore include a sufficiently large interval to the left of the origin, which ensures that the random walk on $\omega$ leaves $\mathcal G_k^+$ by crossing $\{\omega_{a_k^+},\omega_{b_k^+}\}$ with high probability.

To define this, first recall the definition of $\at{k}$ from \eqref{eq:def_ex} and write, similarly to the corresponding definition on the positive axis, $\bt{k}=\at{k}-1$, so that $\{\omega_{\bt{k}},\omega_{\at{k}}\}$ is the closest nearest-neighbour edge to the left-hand side of the origin whose resistance exceeds $g_k^+$. We then define the vertex set of $\mathcal{G}_k^+$ to be
\[V\left(\mathcal{G}_k^+\right):=\left\{\omega_{\bt{k}},\omega_{\at{k}},\dots,\omega_{a_k^+},\omega_{b_k^+}\right\},\]
and the edge set to be all possible (unordered) pairs of distinct vertices in $V(\mathcal{G}_k^+)$. We will consider this as an electrical network with (symmetric) conductances given by
\begin{align*}
c^{\alpha,\lambda/n}_k(\omega_i,\omega_j):=\left\{\begin{array}{ll}
  c^{\alpha,\lambda/n}(\omega_i,\omega_j), & \mbox{if }\at{k}\leq i,j\leq a_k^+,\\
  \sum_{l\geq b_k^+}c^{\alpha,\lambda/n}(\omega_i,\omega_l), & \mbox{if }\at{k}\leq i\leq a_k^+,\:j=b_k^+,\\
  \sum_{l\leq \bt{k}}c^{\alpha,\lambda/n}(\omega_l,\omega_j), & \mbox{if }i=\bt{k},\:\at{k}\leq j\leq a_k^+,\\
  \sum_{l\leq \bt{k}}\sum_{m\geq b_k^+}c^{\alpha,\lambda/n}(\omega_l,\omega_m),  &  \mbox{if }i=\bt{k},\:j=b_k^+.
\end{array}\right.
\end{align*}
In particular, this is the network obtained from the original network defining the Mott random walk by identifying all the vertices $\{b_k^+,b_k^++1,\dots\}$ with $b_k^+$ and all the vertices $\{\dots,\bt{k}-1,\bt{k}\}$ with $\bt{k}$. See Figure~\ref{fig:aux_graph} for an example realisation of the configuration of vertices within $\mathcal{G}_k^+$ with $k=2$ and $3$. We note that, by basic properties of Poisson processes, it is possible to check that, $\mathbf{P}$-a.s., the graph $\mathcal{G}_k^+$ is well-defined and the conductances described above are all finite. Moreover, if we consider $(P_k,X^k)$ to be the continuous time Markov chain started from 0 with generator $L_k^{\alpha,\lambda/n}$, as characterised by
\begin{align*}
(L_k^{\alpha,\lambda/n}f)(\omega_i)&:=\sum_{j\in \{\bt{k},...,b_k^+\}}\frac{c_k^{\alpha,\lambda/n}(\omega_i,\omega_j)}{c_k^{\alpha,\lambda/n}(\omega_i)}\left(f(\omega_j)-f(\omega_i)\right),
\end{align*}
where we set
\[c_k^{\alpha,\lambda/n}(\omega_i):=\left\{\begin{array}{ll}
                                                    \sum_{j\in \{{\bt{k}},{\at{k}},\dots,{a_k^+},{b_k^+}\}\backslash\{i\}}c_k^{\alpha,\lambda/n}(\omega_i,\omega_j), & \mbox{if }\at{k}\leq i\leq a_k^+,\\
                                                    1, & \mbox{if }i\in\{\bt{k},\:b_k^+\},
                                                  \end{array}\right.\]
then $X^k$ behaves exactly as the Mott random walk $X$, up until the time it exits the set $\{\omega_{\at{k}},\dots,\omega_{a_k^+}\}$. Note that the choice of $c_k^{\alpha,\lambda/n}(\omega_i)=1$ for $i\in\{\bt{k},\:b_k^+\}$ is arbitrary for this subsection, but will be convenient for the argument of the next subsection.

\begin{figure}[ht!]
\begin{center}
\includegraphics[width=.9\textwidth]{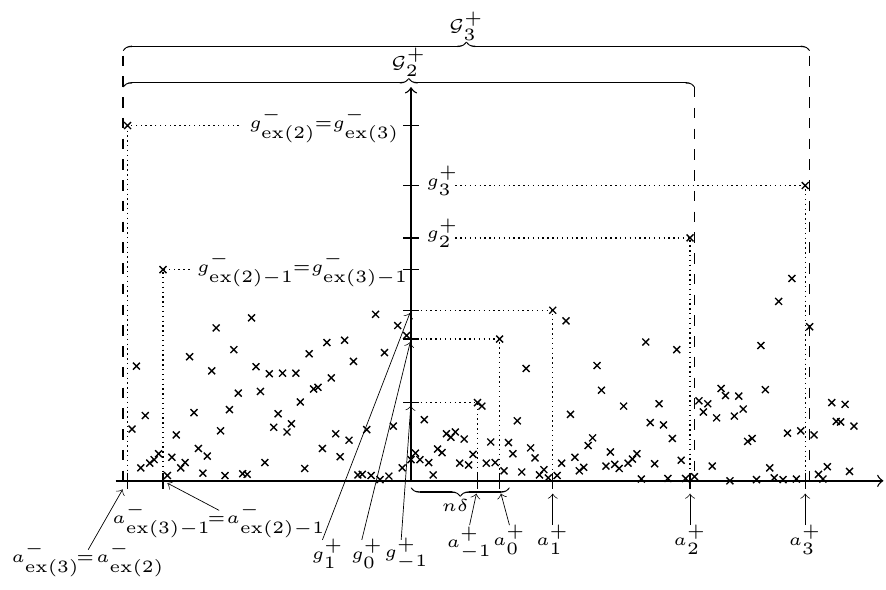}
\end{center}
\caption{An illustration of the point process $(i,r^{\alpha,0}(\omega_i,\omega_{i+1}))_{i\in\Z}$ together with the indices used to define the auxiliary graphs $\mathcal G_2^+$ and $\mathcal G_3^+$. \label{fig:aux_graph}}
\end{figure}

Let $R^k_{\mathrm{eff}}$ denote the effective resistance on $\mathcal{G}_k^+$, which is defined similarly to \eqref{eq:def_Reff}. We close this subsection by proving upper and lower bounds for $R^k_{\mathrm{eff}}$. We recall that, by the definition in Section~\ref{sec:prelim},
\[\gtt{k}=\max_{j\in\{\at{k},\dots,-1\}}r^{\alpha,0}(\omega_j,\omega_{j+1}),\]
which represents the size of the largest barrier between $\omega_{\at{k}}$ and the origin. (Again, see Figure~\ref{fig:aux_graph}.)

\begin{lemma}\label{lem:R_bounds}
There exists $C>1$ such that the following hold on $A_n^{\delta,K}$, for all $n$ large enough.
\begin{enumerate}
 \item[(i)] For all $A,B\subseteq\mathcal G_k^+$ with $A\cap \{\omega_{\at{k}},...,\omega_{a_k^+}\}\neq\emptyset$, $B\cap \{\omega_{\at{k}},...,\omega_{a_k^+}\}\neq\emptyset$,
\begin{align*}
{R}^k_\mathrm{eff}(A,B)\leq C (g_{k-1}^++\gtt{k})\ell_1(n).
\end{align*}
\item[(ii)] For all $A\subseteq\{\omega_{\at{k}},...,\omega_{a_k^+}\}$ and $B\subseteq \{\omega_{\bt{k}},\omega_{b_k^+}\}$,
\begin{align*}
{R}^k_\mathrm{eff}(A,B)\geq \frac{g_k^+}{C\ell_1(n)^5}.
\end{align*}
\item[(iii)] ${R}^k_\mathrm{eff}(\at{k},\omega_{a_k^+})\geq \frac{g_{k-1}^++\gtt{k}}{C\ell_1(n)^5}$.
\end{enumerate}
\end{lemma}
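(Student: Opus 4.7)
All three bounds follow by combining standard electrical network estimates (the series law for the upper bound and the Nash--Williams inequality for the lower bounds) with careful control of nearest-neighbour resistance sums and cut conductance sums on the event $A_n^{\delta,K}$. Throughout, the drift factor $e^{\lambda(\omega_i+\omega_j)/n}$ can be absorbed into $\ell_1(n)^{O(1)}$: by \eqref{eq:space_scaling}, on vertices with $|\omega_i|\le n\ell_3(n)$ its modulus is at most $e^{2|\lambda|\ell_3(n)}\le\ell_1(n)$ for $n$ large, while contributions from $|\omega_l|$ much larger than $n\ell_3(n)$ are killed by the super-exponential decay of $e^{-|\omega_i-\omega_l|^\alpha}$ coming from $\alpha>1$.

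For (i), I would choose $a\in A\cap\{\omega_{\at{k}},\dots,\omega_{a_k^+}\}$ and $b\in B\cap\{\omega_{\at{k}},\dots,\omega_{a_k^+}\}$; the nearest-neighbour path between them lies inside the middle block of $\mathcal G_k^+$, where conductances agree with the original environment. Monotonicity of effective resistance and the series law give $R^k_\mathrm{eff}(A,B)\le R^k_\mathrm{eff}(a,b)\le\sum_j r^{\alpha,\lambda/n}(\omega_j,\omega_{j+1})$ over nearest-neighbour edges between $a$ and $b$, and after pulling out the drift correction the claim follows by splitting the sum at the origin and applying \eqref{eq:NN+} and \eqref{eq:NN-}.

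For (ii), monotonicity in the terminal sets reduces the claim to $A=\{\omega_{\at{k}},\dots,\omega_{a_k^+}\}$, $B=\{\omega_{\bt{k}},\omega_{b_k^+}\}$, for which $A\cup B=V(\mathcal G_k^+)$. The unique admissible function then equals $0$ on $A$ and $1$ on $B$, so $R^k_\mathrm{eff}(A,B)^{-1}$ equals the total cut conductance, which upon unfolding the collapse defining $\mathcal G_k^+$ rewrites in the original environment as
\[
\sum_{\substack{\at{k}\le i\le a_k^+\\ l\in\Z\setminus\{\at{k},\dots,a_k^+\}}}c^{\alpha,\lambda/n}(\omega_i,\omega_l).
\]
Every term satisfies $|\omega_i-\omega_l|\ge\Delta:=\omega_{b_k^+}-\omega_{a_k^+}$ (the left-side gap $\omega_{\at{k}}-\omega_{\bt{k}}$ is also $\ge\Delta$ by the definition of $\ex(k)$), so convexity of $x\mapsto x^\alpha$ yields $|\omega_i-\omega_l|^\alpha\ge\Delta^\alpha+\alpha\Delta^{\alpha-1}(|\omega_i-\omega_l|-\Delta)$, bounding each term by $\ell_1(n)(g_k^+)^{-1}e^{-\eta(|\omega_i-\omega_l|-\Delta)}$ with $\eta:=\alpha\Delta^{\alpha-1}\asymp(\log n)^{\alpha-1}$. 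The remaining sum factorises into four one-sided geometric-type sums $\sum_j e^{-\eta Y_j}$ over increments $Y_j$ of a Poisson process; Campbell's formula gives each an expectation of $1+\eta^{-1}$, and each is bounded by $\ell_1(n)^{O(1)}$ with high probability (this may require augmenting $A_n^{\delta,K}$ by a further event of negligible complement).

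Finally, (iii) follows by applying Nash--Williams separately to the cut $\Pi_1$ at the previous-record edge $(\omega_{a_{k-1}^+},\omega_{b_{k-1}^+})$ and to the cut $\Pi_2$ at the left-side edge realising $\gtt{k}$. Each cut sum is estimated exactly as in (ii) with $g_k^+$ replaced by $g_{k-1}^+$ and $\gtt{k}$ respectively, yielding
\[
R^k_\mathrm{eff}(\omega_{\at{k}},\omega_{a_k^+})\ge\max\!\Big(\tfrac{g_{k-1}^+}{C\ell_1(n)^5},\tfrac{\gtt{k}}{C\ell_1(n)^5}\Big)\ge\tfrac{g_{k-1}^++\gtt{k}}{2C\ell_1(n)^5};
\]
crucially, there is no need to arrange the cuts $\Pi_1,\Pi_2$ to be disjoint (which they are not, since long-range edges may span both barriers). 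The main obstacle throughout is the cut-sum estimate in (ii) and (iii): the challenge is to control an a priori infinite sum of long-range conductances by the single dominant contribution of $(\omega_{a_k^+},\omega_{b_k^+})$, which forces one both to capture the exponential decay away from this term via convexity and to bound the resulting Poisson-point sums deterministically on $A_n^{\delta,K}$.
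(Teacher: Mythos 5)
Your parts (i) and (iii) essentially reproduce the paper's argument: for (i), the paper also reduces to singletons by monotonicity, uses the triangle inequality for the resistance metric to bound by a sum of nearest-neighbour resistances, splits at the origin, and applies \eqref{eq:NN+}, \eqref{eq:NN-} and \eqref{eq:space_scaling}; for (iii), the paper likewise applies the single-cut bound twice (once at the previous positive record, once at the edge realising $\gtt{k}$) and takes the maximum.

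Where you diverge is in the cut-sum estimate for (ii). You bound the long-range conductance $e^{-|\omega_i-\omega_l|^\alpha}$ via \emph{convexity} of $x^\alpha$ at $x=\Delta$, producing the extra factor $e^{-\eta(|\omega_i-\omega_l|-\Delta)}$ with $\eta=\alpha\Delta^{\alpha-1}$ and then summing these geometric-type terms over the Poisson points. The paper instead uses \emph{superadditivity} $(a+b+c)^\alpha\ge a^\alpha+b^\alpha+c^\alpha$ (valid for $\alpha\ge 1$), writing
\[
c^{\alpha,\lambda/n}(\omega_l,\omega_m)\le c^{\alpha,\lambda/n}(\omega_l,\omega_{\bt{k}})\,c^{\alpha,0}(\omega_{\bt{k}},\omega_{\at{k}})\,c^{\alpha,\lambda/n}(\omega_{\at{k}},\omega_m)\,e^{-2\lambda(\omega_{\bt{k}}+\omega_{\at{k}})/n},
\]
so the cut sum factorises into $c^{\alpha,\lambda/n}(\omega_{\bt{k}})\cdot c^{\alpha,0}(\omega_{\bt{k}},\omega_{\at{k}})\cdot c^{\alpha,\lambda/n}(\omega_{\at{k}})$, each factor already controlled by \eqref{eq:pointwise} on $A_n^{\delta,K}$ (and the middle one $\le(g_k^+)^{-1}$). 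This is the key advantage: the paper's inequality lands directly on quantities that are part of the definition of $A_n^{\delta,K}$, so no further high-probability event is needed. Your convexity route is mathematically sound, but the resulting sums $\sum_j e^{-\eta Y_j}$ are \emph{not} dominated termwise by anything appearing in Definition~\ref{def:A} (for small gaps $e^{-\eta y}$ is smaller than $e^{-y^\alpha}$, for large gaps larger, so no comparison holds), and you correctly anticipate that you would have to enlarge the event. Since the lemma asserts a deterministic bound on $A_n^{\delta,K}$ itself, that enlargement is not just a stylistic choice — it would change the statement, or require you to fold the extra condition into $A_n^{\delta,K}$ and re-prove Proposition~\ref{prop:Awhp}. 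If you want to preserve the proposition as stated, replace the convexity step by the superadditivity factorisation.
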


\begin{proof}
We start with part (i). By Rayleigh's monotonicity property (see \cite[Section 1.4.1]{DS}, for example), we may assume that $A=\{\omega_i\}$ and $B=\{\omega_j\}$ for some $\at{k}\leq i<j\leq a_k^+$. Then
\begin{eqnarray}
{R}^k_\mathrm{eff}(\omega_i,\omega_j)&\leq& {\textstyle \sum_{l=i+1}^{j}}{R}^k_\mathrm{eff}(\omega_{l-1},\omega_{l})\nonumber\\
&\leq &{\textstyle \sum_{l=\at{k}}^{a_k^+}}{R}^k_\mathrm{eff}(\omega_{l-1},\omega_{l})\nonumber\\
&\leq & {\textstyle \sum_{l=\at{k}}^{a_k^+}}r^{\alpha,\lambda/n}(\omega_{l-1},\omega_l)\nonumber\\
&=&{\textstyle \sum_{l=\at{k}}^{a_k^+}}r^{\alpha,0}(\omega_{l-1},\omega_l)e^{-\lambda(\omega_{l-1}+\omega_l)/n},\nonumber\\
&\leq & C(g_{k-1}^++\gtt{k})e^{2|\lambda| \ell_3(n)}.\nonumber
\end{eqnarray}
We have used the triangle inequality for the resistance metric (see \cite[Theorem 2.64]{Barbook}, for example) in the first inequality. The third inequality is again due to Rayleigh's monotonicity property and the final inequality follows from \eqref{eq:space_scaling} and \eqref{eq:NN+}--\eqref{eq:NN-}, which hold on $A_n^{\delta,K}$. The claim now follows from $\ell_3=\log\log \ell_1$.

For part (ii), using again the monotonicity property of the effective resistance and the parallel law (see \cite[Remark 2.52]{Barbook}, for example), we have that
\begin{equation}\label{eq:Reff0bb}
\begin{split}
{R}^k_\mathrm{eff}(A,B)&\geq {{R}^k_\mathrm{eff}\left(V\left(\mathcal{G}_k^+\right)\backslash\{\omega_{\bt{k}},\omega_{{b}_k^+}\},\{\omega_{\bt{k}},\omega_{{b}_k^+}\}\right)}\\
&\geq \left( {\textstyle \sum_{m=\at{k}}^{a_k^+}}c^{\alpha,\lambda/n}_k(\omega_{\bt{k}},\omega_m)+ {\textstyle \sum_{l=\at{k}}^{a_k^+}}c^{\alpha,\lambda/n}_k(\omega_l,\omega_{b_k^+})\right)^{-1}\\
&=\left( {\textstyle \sum_{l\leq \bt{k}}\sum_{m\geq \at{k}}}c^{\alpha,\lambda/n}(\omega_l,\omega_m)+ {\textstyle \sum_{l\leq a_k^+}\sum_{m\geq b_k^+}}c^{\alpha,\lambda/n}(\omega_l,\omega_m)\right)^{-1}.
\end{split}
\end{equation}
Now, note that, since $\alpha>1$, for $l\leq\bt{k}$, $m\geq \at{k}$,
\begin{eqnarray*}
\lefteqn{c^{\alpha,\lambda/n}(\omega_l,\omega_m)}\\
&=&e^{-|\omega_l-\omega_m|^\alpha+\lambda(\omega_l+\omega_m)/n}\\
&\leq&e^{-|\omega_l-\omega_{\bt{k}}|^\alpha-|\omega_{\bt{k}}-\omega_{\at{k}}|^\alpha-|\omega_{\at{k}}-\omega_m|^\alpha+\lambda(\omega_l+\omega_m)/n}\\
&=&c^{\alpha,\lambda/n}(\omega_l,\omega_{\bt{k}})c^{\alpha,0}(\omega_{\bt{k}},\omega_{\at{k}})c^{\alpha,\lambda/n}(\omega_{\at{k}},\omega_m)e^{-2\lambda(\omega_{\bt{k}}+\omega_{\at{k}})/n}.
\end{eqnarray*}
Hence, on $A_n^{\delta,K}$, for $n$ large enough,
\begin{equation}
  \label{eq:leftbridges}
\begin{split}
\lefteqn{  {\textstyle \sum_{l\leq\bt{k}}\sum_{m\geq \at{k}}}c^{\alpha,\lambda/n}(\omega_l,\omega_m)}\\&\leq c^{\alpha,\lambda/n}(\omega_{\bt{k}})c^{\alpha,0}(\omega_{\bt{k}},\omega_{\at{k}})c^{\alpha,\lambda/n}(\omega_{\at{k}})e^{-2\lambda(\omega_{\bt{k}}+\omega_{\at{k}})/n}\\
  &\leq C(g_k^+)^{-1}\ell_1(n)^4{\ell_2(n)^{2\lambda}}\\
  &\leq C(g_k^+)^{-1}\ell_1(n)^5,
\end{split}
\end{equation}
where for the second inequality we have applied \eqref{eq:space_scaling} and \eqref{eq:pointwise} and the fact that
\[c^{\alpha,0}(\omega_{\bt{k}},\omega_{\at{k}})=r^{\alpha,0}(\omega_{\bt{k}},\omega_{\at{k}})^{-1}\leq r^{\alpha,0}(\omega_{{a}_k^+},\omega_{{b}_k^+})^{-1}=(g_k^+)^{-1}.\]
Applying the same argument to the second sum in \eqref{eq:Reff0bb}, we obtain
\[R^{k}_{\mathrm{eff}}\left(A,B\right)\ge \left((g_{k}^+)^{-1}+(\gt{k})^{-1}\right)^{-1}/{C\ell_1(n)^5}\ge  \frac{g_{k}^+}{2C\ell_1(n)^5}.\]
In the final inequality, we have again used that $g_k^+\leq \gt{k}$. We turn to the proof of part (iii). Similar to \eqref{eq:Reff0bb}--\eqref{eq:leftbridges}, one can show that
\[R^k_{\mathrm{eff}}\left(\omega_{\at{k}},{\omega_{a_k^+}}\right)\geq R^k_{\mathrm{eff}}\left(\{\omega_{\bt{k}},\dots,\omega_{{a}_{k-1}^+}\},\{\omega_{b_{k-1}^+},\omega_{b_k^+}\}\right)\geq \frac{g_{k-1}^+}{C\ell_1(n)^5}.\]
Since the same argument applies when we consider the edge of nearest neighbour resistance $\gtt{k}$, it follows that
\begin{align*}
R^k_{\mathrm{eff}}\left(\omega_{\at{k}},{\omega_{a_k^+}}\right)\geq C \max\{\gtt{k},g_{k-1}^+\}/\ell_1(n)^5\geq  \frac C2 (\gtt{k}+g_{k-1}^+)/ \ell_1(n)^5.\tag*{\qedhere}
\end{align*}
\end{proof}

\subsection{Upper bound on barrier crossing times}

Using definitions analogous to \eqref{eq:alphadef} and \eqref{eq:betadef} for exceedance times on the negative axis, we have that
\begin{align*}
\betat{k}=\inf\{t\geq 0:X_t\leq \omega_{\bt{k}}\}
\end{align*}
is the first time the random walk on $\omega$ leaves $\mathcal G_k^+$ on the left-hand side. As a first step towards proving \eqref{eq:not_too_fast}, we check that, on the event $A_n^{\delta,K}$, there is a high quenched probability that the Mott random walk $X$ exceeds $\omega_{a_k^+}$ before time $\betat{k}$.

\begin{lemma}\label{lem:l1} There exists a $C\in (0,\infty)$ such that for $n\in\mathbb{N}$ and $1\leq k\leq K$, on $A_n^{\delta,K}$,
\[P_\omega^{\alpha,\lambda/n}\left(\alpha_k^+\geq \betat{k}\right)\leq C\ell_1(n)^6e^{-(\log n)^{\alpha-1}/\ell_2(n)}.\]
\end{lemma}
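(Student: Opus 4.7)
The plan is to translate the event $\{\alpha_k^+\ge\betat{k}\}$ into a hitting-probability question for the auxiliary chain $X^k$ on the finite network $\mathcal G_k^+$ and then bound that probability by a ratio of effective resistances. By the construction of $\mathcal G_k^+$ in Subsection~\ref{sec31}, the chain $X^k$ started at $\omega_0=0$ agrees in law with $X$ up to the first exit from the interior $\{\omega_{\at{k}},\ldots,\omega_{a_k^+}\}$. Under this coupling, $\alpha_k^+$ corresponds to the first time $X^k$ hits $R:=\{\omega_{a_k^+},\omega_{b_k^+}\}$ (any jump of $X$ to some $\omega_j$ with $j\ge b_k^+$ is represented in $\mathcal G_k^+$ as a jump to $\omega_{b_k^+}$), while $\betat{k}$ corresponds to the first time $X^k$ hits $L:=\{\omega_{\bt{k}}\}$. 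Since the continuous-time chain $X^k$ has a.s.\ distinct hitting times for disjoint sets,
\[
P^{\alpha,\lambda/n}_\omega\bigl(\alpha_k^+\ge\betat{k}\bigr)=P_k\bigl(\tau_L<\tau_R\bigr).
\]

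For this hitting probability I would invoke the standard reversibility bound
\[
P_k(\tau_L<\tau_R)\;\le\;\frac{R^k_\mathrm{eff}(0,R)}{R^k_\mathrm{eff}(L,R)},
\]
which follows from comparing the Dirichlet energy of the harmonic function $y\mapsto P_y(\tau_L<\tau_R)$ with that of the equilibrium potential realising $R^k_\mathrm{eff}(L,R)^{-1}$. The numerator is handled by Lemma~\ref{lem:R_bounds}(i) applied with $A=\{0\}$ and $B=R$; both meet the interior (since $\at{k}<0<a_k^+$ puts $\omega_0$ there, and $\omega_{a_k^+}\in R$), giving $R^k_\mathrm{eff}(0,R)\le C(g_{k-1}^++\gtt{k})\ell_1(n)$ on $A_n^{\delta,K}$.

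For the denominator, I would apply the Nash--Williams inequality with the single-vertex cut $\{\omega_{\bt{k}}\}$: removing all edges at $\omega_{\bt{k}}$ disconnects $L$ from $R$ in $\mathcal G_k^+$, so
\[
R^k_\mathrm{eff}(L,R)\;\ge\;\frac{1}{c^{\alpha,\lambda/n}_k(\omega_{\bt{k}})}.
\]
By the definition of the conductances on $\mathcal G_k^+$,
\[
c^{\alpha,\lambda/n}_k(\omega_{\bt{k}})=\sum_{l\le\bt{k}}\sum_{m\ge\at{k}}c^{\alpha,\lambda/n}(\omega_l,\omega_m),
\]
which is exactly the quantity shown to be at most $C(g_k^+)^{-1}\ell_1(n)^5$ in the estimate \eqref{eq:leftbridges} inside the proof of Lemma~\ref{lem:R_bounds}(ii). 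Consequently $R^k_\mathrm{eff}(L,R)\ge g_k^+/(C\ell_1(n)^5)$ on $A_n^{\delta,K}$.

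Combining the two estimates with the separation \eqref{eq:separation} yields, uniformly over $1\le k\le K$ and $\omega\in A_n^{\delta,K}$,
\[
P_k(\tau_L<\tau_R)\;\le\;C\ell_1(n)^6\cdot\frac{g_{k-1}^++\gtt{k}}{g_k^+}\;\le\;C\ell_1(n)^6\,e^{-(\log n)^{\alpha-1}/\ell_2(n)},
\]
which is exactly the claim. The one delicate point is the denominator bound: both $L$ and $R$ lie \emph{outside} the interior and are separated by the giant nearest-neighbour edge of resistance $g_k^+$, so one needs a lower bound on $R^k_\mathrm{eff}(L,R)$ that extracts this factor. The cut through $\omega_{\bt{k}}$ is designed to do this, and the relevant conductance sum has already been controlled in \eqref{eq:leftbridges}.
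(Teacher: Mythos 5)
Your argument is correct and follows the same strategy as the paper (reduce to a hitting-probability question for $X^k$ on $\mathcal G_k^+$, bound it by a ratio of effective resistances, then apply the separation property \eqref{eq:separation}), but it diverges at one point. The paper uses the ratio $R^k_{\mathrm{eff}}(0,R)/R^k_{\mathrm{eff}}(0,L)$, a form chosen precisely so that both numerator and denominator fall directly under Lemma~\ref{lem:R_bounds}(i)--(ii) with $A=\{\omega_0\}$. You instead use $R^k_{\mathrm{eff}}(0,R)/R^k_{\mathrm{eff}}(L,R)$; since $L=\{\omega_{\bt{k}}\}$ does not intersect $\{\omega_{\at{k}},\dots,\omega_{a_k^+}\}$, Lemma~\ref{lem:R_bounds}(ii) cannot be invoked for the denominator, and your Nash--Williams cut at $\omega_{\bt{k}}$ (borrowing the conductance sum estimate~\eqref{eq:leftbridges}) is a perfectly valid substitute that re-derives the same $g_k^+/(C\ell_1(n)^5)$ lower bound. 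Two caveats. First, your justification for the bound $P_k(\tau_L<\tau_R)\le R^k_{\mathrm{eff}}(0,R)/R^k_{\mathrm{eff}}(L,R)$ via comparing Dirichlet energies only yields the weaker square-root version $P_k(\tau_L<\tau_R)\le\sqrt{R^k_{\mathrm{eff}}(0,R)/R^k_{\mathrm{eff}}(L,R)}$ (take $f=h$ with $h|_L=1$, $h|_R=0$ in the variational characterisation~\eqref{eq:Reff-var} of $R^k_{\mathrm{eff}}(0,R)$; one gets $h(0)^2\le R^k_{\mathrm{eff}}(0,R)/R^k_{\mathrm{eff}}(L,R)$). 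The stated bound without the square root is nonetheless true, but it requires the three-point resistance formula $P_x(\tau_a<\tau_b)=(R_{\mathrm{eff}}(x,b)-R_{\mathrm{eff}}(x,a)+R_{\mathrm{eff}}(a,b))/(2R_{\mathrm{eff}}(a,b))$ together with the triangle inequality, or a reference such as \cite[Lemma 2.62]{Barbook}; the square-root version would not reproduce the constants in the lemma as stated, though it would still vanish. Second, a small notational slip: in the paper $c_k^{\alpha,\lambda/n}(\omega_{\bt{k}})$ is defined to equal $1$ (it is the boundary-vertex weight), whereas the quantity you need for Nash--Williams is the sum of edge conductances incident to $\omega_{\bt{k}}$, which you correctly identify as $\sum_{l\le\bt{k}}\sum_{m\ge\at{k}}c^{\alpha,\lambda/n}(\omega_l,\omega_m)$ — just give it a different symbol.
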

\begin{proof} For a set $A\subseteq \{\bt{k},...,b_k^+\}$, write
\begin{align}\label{eq:defT}
T_A:=\inf\{t\geq 0:\:X^k_t=\omega_i\text{ for some }i\in A\}
\end{align}
for the hitting time of $A$ by $X^k$. By construction, we have that
\[P_\omega^{\alpha,\lambda/n}\left(\alpha_k^+\geq \betat{k}\right)=\Pk\left(T_{\{a_k^+,b_k^+\}}\geq T_{\bt{k}}\right).\]
Here and in the following, we write $T_i$ instead of $T_{\{i\}}$ for simplicity. On $A_n^{K,\delta}$, we can bound the latter expression as follows:
\begin{align}
\Pk\left(T_{\{a_k^+,b_k^+\}}\geq T_{\bt{k}}\right)&\leq
\frac{{R}^k_\mathrm{eff}(0,\{\omega_{a_k^+},\omega_{b_k^+}\})}{{R}^k_\mathrm{eff}(0,\omega_{\bt{k}})}\label{eq:bbb1}\\
&\leq \frac{C(g_{k-1}^++\gtt{k})\ell_1(n)^6}{g_{k}^+}\nonumber\\
&\leq C\ell_1(n)^6e^{-(\log n)^{\alpha-1}/\ell_2(n)},\nonumber
\end{align}
where the first inequality is due to \cite[Lemma 2.62]{Barbook}, in the second inequality we have applied the upper and lower bounds from Lemma~\ref{lem:R_bounds}(i)--(ii) and the final inequality is due to  \eqref{eq:separation}.
\end{proof}

It is thus enough to estimate the time taken by $X$ to leave the set $\{\omega_{\at{k}},\dots,\omega_{a_k^+-1}\}$, which is what we do next.

\begin{lemma}\label{l2} There exists a $C\in (0,\infty)$ such that for $n\in\mathbb{N}$ and $1\leq k\leq K$, on $A_n^{\delta,K}$,
\[P_\omega^{\alpha,\lambda/n}\left(\min\{\alpha_k^+,\betat{k}\}\geq (g_{k-1}^++\gtt{k})n\ell_1(n)^4\right)\leq \frac{C}{\ell_1(n)}.\]
\end{lemma}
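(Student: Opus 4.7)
The plan is to realize $\min\{\alpha_k^+,\betat{k}\}$ as a hitting time of the auxiliary walk $X^k$ from Subsection~\ref{sec31} and then apply the commute-time identity to bound the expected hitting time. Because $X$ and $X^k$ coincide until $X$ first exits $\{\omega_{\at{k}},\dots,\omega_{a_k^+}\}$, the event $\{X_t\geq \omega_{a_k^+}\}$ corresponds to $X^k$ hitting $\{\omega_{a_k^+},\omega_{b_k^+}\}$ while $\{X_t\leq \omega_{\bt{k}}\}$ corresponds to $X^k$ hitting $\{\omega_{\bt{k}}\}$, so that
\[P_\omega^{\alpha,\lambda/n}\left(\min\{\alpha_k^+,\betat{k}\}\geq T\right)=\Pk\left(T_{\{\bt{k},a_k^+,b_k^+\}}\geq T\right).\]
By Markov's inequality it thus suffices to prove, on $A_n^{\delta,K}$, that
\[\Ek\left(T_{\{\bt{k},a_k^+,b_k^+\}}\right)\leq C(g_{k-1}^++\gtt{k})n\ell_1(n)^{3}.\]

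For this I would invoke the standard hitting-time estimate for reversible networks (cf.~\cite[Theorem~4.27]{Barlow}), which, after collapsing the target set to a single vertex and applying the commute-time identity on the quotient, gives
\[\Ek\left(T_{\{\bt{k},a_k^+,b_k^+\}}\right)\leq R^k_{\mathrm{eff}}\left(0,\{\bt{k},a_k^+,b_k^+\}\right)\cdot \mu^k\!\left(V(\mathcal G_k^+)\right),\]
where $\mu^k$ denotes the invariant measure of $X^k$. Since $\omega_{a_k^+}\in\{\omega_{\at{k}},\dots,\omega_{a_k^+}\}$, Rayleigh monotonicity together with Lemma~\ref{lem:R_bounds}(i) controls the resistance factor by $C(g_{k-1}^++\gtt{k})\ell_1(n)$. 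For the mass factor, the construction of $c_k^{\alpha,\lambda/n}$ was arranged precisely so that $\mu^k(\omega_l)=c^{\alpha,\lambda/n}(\omega_l)$ at every interior vertex (while $\mu^k$ assigns mass $1$ to each of the two boundary vertices), and hence \eqref{eq:mass_control} yields $\mu^k(V(\mathcal G_k^+))\leq 2+c_2n\ell_1(n)^{2}\leq Cn\ell_1(n)^{2}$. Multiplying these two estimates and substituting into Markov's inequality produces the claimed $C/\ell_1(n)$ bound.

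No genuinely delicate new estimate enters beyond Lemma~\ref{lem:R_bounds}(i) and \eqref{eq:mass_control}. The main obstacle, such as it is, consists in the bookkeeping for the identification of the Mott walk's exit time with the auxiliary walk's hitting time of $\{\bt{k},a_k^+,b_k^+\}$, together with the verification that the invariant mass of $X^k$ on the interior is literally $\sum_{l=\at{k}}^{a_k^+} c^{\alpha,\lambda/n}(\omega_l)$, so that \eqref{eq:mass_control} is applicable verbatim.
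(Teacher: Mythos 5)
Your proposal is correct and mirrors the paper's argument exactly: identify $\min\{\alpha_k^+,\betat{k}\}$ with the exit time of the auxiliary walk $X^k$, bound its expectation by $R^k_{\mathrm{eff}}(0,\{\omega_{\bt{k}},\omega_{a_k^+},\omega_{b_k^+}\})$ times the invariant mass, control those two factors via Lemma~\ref{lem:R_bounds}(i) and \eqref{eq:mass_control}, and finish with Markov's inequality. The only cosmetic difference is that the paper sums the mass over the interior vertices $\{\omega_{\at{k}},\dots,\omega_{a_k^+-1}\}$ rather than over all of $V(\mathcal G_k^+)$, but since the boundary vertices carry mass $1$ each this changes nothing in the final bound.
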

\begin{proof} It is a straightforward consequence of the commute time identity (see \cite{CRRST} or \cite[Theorem~2.63]{Barbook}) that
\[
E_k\left(T_{\{\bt{k},a_k^+,b_k^+\}}\right)\leq R^k_{\mathrm{eff}}\left(0,\{\omega_{\bt{k}},\omega_{a_k^+},\omega_{b_k^+}\}\right){\textstyle \sum_{l=\at{k}}^{a_k^+-1}}c_k^{\alpha,\lambda/n}(\omega_l),\]
where we again write $T_A$ for the hitting time of a set $A$ by $X^k$, see \eqref{eq:defT}. By \eqref{eq:mass_control}, it holds that
\[{\textstyle \sum_{l=\at{k}}^{a_k^+-1}}c_k^{\alpha,\lambda/n}(\omega_l)={\textstyle \sum_{l=\at{k}}^{a_k^+-1}}c^{\alpha,\lambda/n}(\omega_l)\leq Cn\ell_1(n)^2.\]
Hence, using Lemma~\ref{lem:R_bounds}(i) and  the equivalence of the laws of $X$ and $X^k$ up to the exit time of $\{\omega_{\at{k}},\dots,\omega_{a_k^+}\}$, we obtain that
\[E_\omega^{\alpha,\lambda/n}\left(\min\{\alpha_k^+,\betat{k}\}\right)\leq  C(g_{k-1}^++\gtt{k})n\ell_1(n)^3.\]
The result follows by applying this bound in conjunction with Markov's inequality.
\end{proof}

\begin{proof}[Proof of \eqref{eq:not_too_fast}] A simple union bound gives
\begin{eqnarray*}
\lefteqn{P_\omega^{\alpha,\lambda/n}\left(\alpha_k^+\geq (g_{k-1}^++\gtt{k})n\ell_1(n)^4\right)}\\
&\leq& P_\omega^{\alpha,\lambda/n}\left(\min\{\alpha_k^+,\betat{k}\}\geq (g_{k-1}^++\gtt{k})n\ell_1(n)^4\right) +
 P_\omega^{\alpha,\lambda/n}\left(\alpha_k^+\geq \betat{k}\right).
\end{eqnarray*}
Hence, combining Lemmas~\ref{lem:l1} and \ref{l2} gives the result.
\end{proof}

\subsection{Lower bound on barrier crossing times}\label{sec33}

For proving \eqref{eq:not_too_slow}, it will be convenient to continue working with the graph $\mathcal{G}_k^+$ defined at the beginning of Subsection~\ref{sec31}. For a given realisation of such a graph, we introduce stopping times for the random walk $X^k$ by setting $\sigma(0):=0$ and, for $i\geq 0$,
\begin{align*}
\tau(i)&:=\inf\left\{t\in[\sigma(i),\Delta]:\:X(t)= \omega_{a_k^+}\right\},\\
\sigma(i+1)&:=\inf\left\{t\in [\tau(i),\Delta]:\:X(t)=\omega_{\at{k}}\right\},
\end{align*}
where $\Delta:=T_{\{\bt{k},b_k^+\}}$ (we again write $T_A$ for the hitting time of a set $A$ by $X^k$, see \eqref{eq:defT}), and we set the infimum of an empty set to be $\infty$ by convention. Furthermore, let $G:=\inf\left\{i\geq 0:\:\tau(i)=\infty\right\}$. Clearly, if $G\geq 2$, then $\Delta\geq S_{G-1}$, where $S_m:=\sum_{i=1}^{m}(\tau_n(i)-\tau_n(i-1))$. Since $\Delta$ is equal in distribution to $\min\{\betat{k},\beta_k\}$, we have
\begin{align}
  \label{eq:betatoS}
  P_\omega^{\alpha,\lambda/n}\left(\beta_k^+\geq \frac{ng_k^+}{\ell_1(n)^{16}}\right)
  \geq \Pk\left(S_{G-1}\geq ng_k^+/\ell_1(n)^{16}\right).
\end{align}
It will thus suffice to understand the distribution of $S_{G-1}$. The next two lemmas give estimates on $G$ and the excursion time $\tau(1)-\tau(0)$, respectively.  Combining these will give the result of interest.

\begin{lemma}\label{lem:geom} There exists $C_1\in (0,\infty)$ such that for $n\in\mathbb{N}$ and $1\leq k\leq K$, on $A_n^{\delta,K}$,
\[\Pk\left(G\geq \frac{ g_{k}^+}{C_1 \ell_1(n)^7(\gtt{k}+g_{k-1}^+)}\right)\geq e^{-C_1\ell_1(n)^{-1}}.\]
\end{lemma}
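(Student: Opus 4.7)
The plan is to decompose $\{G\geq M\}$ (with $M:=g_k^+/(C_1\ell_1(n)^7(\gtt{k}+g_{k-1}^+))$) into a string of successful single excursions via the strong Markov property of $X^k$, and to bound each single-excursion failure probability by means of Lemma~\ref{lem:R_bounds}. To this end, I would first observe that $\{G\geq M\}$ is precisely the event that all of $\tau(0),\sigma(1),\tau(1),\ldots,\sigma(M-1),\tau(M-1)$ are finite. Applying the strong Markov property of $X^k$ at these $2M-1$ stopping times yields
\[\Pk(G\geq M)=p_0\cdot (p_1 p_2)^{M-1},\]
where $p_0$ is the probability that $X^k$ started from $0$ hits $\omega_{a_k^+}$ before $\{\omega_{\bt{k}},\omega_{b_k^+}\}$, and $p_1$ (resp.\ $p_2$) is the probability that $X^k$ started from $\omega_{a_k^+}$ (resp.\ $\omega_{\at{k}}$) hits $\omega_{\at{k}}$ (resp.\ $\omega_{a_k^+}$) before $\{\omega_{\bt{k}},\omega_{b_k^+}\}$.

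Next I would bound each $1-p_i$ uniformly. Using the standard resistance inequality $P_x(T_B<T_A)\leq R^k_{\mathrm{eff}}(x,A)/R^k_{\mathrm{eff}}(x,B)$ (as invoked at \eqref{eq:bbb1}), together with the upper bound from Lemma~\ref{lem:R_bounds}(i) on the effective resistance between any two vertices of $\{\omega_{\at{k}},\ldots,\omega_{a_k^+}\}$ and the lower bound from Lemma~\ref{lem:R_bounds}(ii) on the effective resistance from any such vertex to $\{\omega_{\bt{k}},\omega_{b_k^+}\}$, each $1-p_i$ is at most
\[p:=C\ell_1(n)^6\,\frac{g_{k-1}^++\gtt{k}}{g_k^+}\]
for a constant $C$ independent of $n$ and $k$. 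Thanks to \eqref{eq:separation}, on $A_n^{\delta,K}$ this quantity decays super-polynomially, so $p\to 0$ uniformly in $k\in\{1,\ldots,K\}$ as $n\to\infty$.

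Finally, combining these bounds gives $\Pk(G\geq M)\geq (1-p)^{2M-1}$. Using $\log(1-p)\geq -2p$ (valid once $p\leq 1/2$), together with the identity $Mp\leq C/(C_1\ell_1(n))$ coming from the choice of $M$, this yields
\[\Pk(G\geq M)\geq \exp(-4Mp)\geq \exp\left(-\frac{4C}{C_1\ell_1(n)}\right),\]
and choosing $C_1$ sufficiently large (so that $C_1^2\geq 4C$) produces the asserted lower bound $e^{-C_1/\ell_1(n)}$. The argument is essentially routine once the resistance estimates of Lemma~\ref{lem:R_bounds} are in place; the only (mild) subtlety is ensuring compatibility of the constant $C_1$ appearing both in the definition of $M$ and in the exponent of the final lower bound.
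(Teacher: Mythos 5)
Your proposal is correct and follows essentially the same route as the paper: decompose $\{G\geq M\}$ via the strong Markov property at the stopping times $\tau(i),\sigma(i)$, bound each single-step failure probability by $C\ell_1(n)^6(g_{k-1}^++\gtt{k})/g_k^+$ using the resistance inequality together with Lemma~\ref{lem:R_bounds}(i)--(ii), and conclude via $(1-p)^{O(M)}$ combined with \eqref{eq:separation}. Your bookkeeping with $2M-1$ stopping times and the explicit $\log(1-p)\geq-2p$ step is in fact slightly more careful than the paper's, which implicitly absorbs the final calculation into the phrase ``together with \eqref{eq:separation}, the result follows.''
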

\begin{proof} First note that, similarly to \eqref{eq:bbb1},
\[\Pk(\tau(0)=\infty)=\Pk\left(T_{a_k^+}>\Delta\right)\leq
\frac{{R}^k_\mathrm{eff}(0,\omega_{a_k^+})}{{R}^k_\mathrm{eff}(0,\{\omega_{\bt{k}},\omega_{{b}_k^+}\})}.\]
By Lemma~\ref{lem:R_bounds}, on $A_n^{\delta,K}$, the numerator here can be bounded above by $C(g_{k-1}^++\gtt{k})\ell_1(n)$ while the denominator is bounded from below by $Cg_k^+/\ell_1(n)^5$. Hence we conclude that
\begin{equation}\label{taubound}
\Pk(\tau(0)=\infty)\leq \frac{C\ell_1(n)^6(g_{k-1}^++\gtt{k})}{g_{k}^+}.
\end{equation}
For subsequent stopping times, we have that
\[\Pk(\sigma(i+1)=\infty\:|\:\tau(i)<\infty)=\Pk
\left(T_{\at{k}}>\Delta\:\vline\:X^k_0=\omega_{a_k^+}\right)\leq
\frac{{R}^k_\mathrm{eff}(\omega_{\at{k}},\omega_{a_k^+})}{{R}^k_\mathrm{eff}(\omega_{a_k^+},\{\omega_{\bt{k}},\omega_{{b}_k^+}\})},\]
and also
\[\Pk(\tau(i)=\infty\:|\:\sigma(i)<\infty)=\Pk
\left(T_{{a}_k^+}>\Delta\:\vline\:X^k_0=\omega_{\at{k}}\right)\leq
\frac{{R}^k_\mathrm{eff}(\omega_{\at{k}},\omega_{a_k^+})}{{R}^k_\mathrm{eff}(\omega_{\at{k}},\{\omega_{\bt{k}},\omega_{{b}_k^+}\})}.\]
Again applying Lemma~\ref{lem:R_bounds}, together with a strong Markov property argument, gives that
\[\begin{split}
  \Pk(\tau(i+1)=\infty\:|\:\tau(i)<\infty)\leq \frac{C\ell_1(n)^6(\gtt{k}+g_{k-1}^+)}{{g_{k}^+}}
\end{split}\]
for appropriately adjusted $C$. Applying the above bounds, we find that
\[\Pk\left(G\geq i+1\right)=\Pk\left(\tau(i)<\infty\right)\geq \left(1- \frac{C\ell_1(n)^6(\gtt{k}+g_{k-1}^+)}{{g_{k}^+}}\right)^i.\]
Together with \eqref{eq:separation}, the result follows.
\end{proof}

Next, we give a lower bound on the excursion times.

\begin{lemma}\label{lem:exc}
There exists $C_2\in (0,\infty)$ such that for $n\in\mathbb{N}$ and $1\leq k\leq K$, on $A_n^{\delta,K}$,
\[\Pk\left(\tau(i)-\tau(i-1)\geq \frac{C_2(\gtt{k}+g_{k-1}^+)n}{ \ell_1(n)^{5}}\cond\tau(i-1)<\infty\right)\geq \frac{C_2}{\ell_1(n)^{8}}.\]
\end{lemma}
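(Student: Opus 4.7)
The plan is to express $\tau(i)-\tau(i-1)$ as a commute time of $X^k$ and apply the Paley-Zygmund inequality. By the strong Markov property at $\tau(i-1)$, the conditional law of $\tau(i)-\tau(i-1)$ given $\tau(i-1)<\infty$ is that of
\[
\tau := T_{\at{k}}+T_{a_k^+}\!\circ\!\theta_{T_{\at{k}}}
\]
under $\Pk$ started from $\omega_{a_k^+}$, and it then suffices to have a lower bound on $\Ek[\tau]$ and an upper bound on $\Ek[\tau^2]$.

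For the first moment, the commute time identity (\cite[Theorem~4.27]{Barlow}, \cite{CRRST}) gives $\Ek[\tau]=2R^k_\mathrm{eff}(\omega_{a_k^+},\omega_{\at{k}})\,W^k$, where $W^k=\sum_{z\in\mathcal{G}_k^+}c_k^{\alpha,\lambda/n}(z)$ is the total mass. Using Lemma~\ref{lem:R_bounds}(iii) together with $W^k\geq\sum_{l=\at{k}}^{a_k^+}c^{\alpha,\lambda/n}(\omega_l)\geq c_1n$ from \eqref{eq:mass_control}, one obtains on $A_n^{\delta,K}$
\[
\Ek[\tau]\geq \frac{c\,(\gtt{k}+g_{k-1}^+)\,n}{\ell_1(n)^5}.
\]

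For the second moment, the standard Markov-property bound $E_x[T_y^2]\leq 2(\max_z E_z[T_y])E_x[T_y]$ combined with $E_z[T_y]\leq 2R^k_\mathrm{eff}(z,y)W^k$ would do the job, but only once the maximum is restricted to interior vertices $z\in\{\omega_{\at{k}},\dots,\omega_{a_k^+}\}$; for such $z$, Lemma~\ref{lem:R_bounds}(i) and \eqref{eq:mass_control} give $E_z[T_{\at{k}}]\leq C(\gtt{k}+g_{k-1}^+)n\,\ell_1(n)^3$. Taking the maximum over all of $\mathcal{G}_k^+$ instead picks up the boundary vertices $\omega_{\bt{k}},\omega_{b_k^+}$, where the holding rate is of order $1/g_k^+$ and the expected hitting time of $\omega_{\at{k}}$ is correspondingly of order $g_k^+n$---too large by a factor $g_k^+/(\gtt{k}+g_{k-1}^+)$ that is super-polynomial in $n$. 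To rule out this contribution I would argue, via an analogue of the electrical-network calculation in the proof of Lemma~\ref{lem:l1} now started at $\omega_{a_k^+}$, that the probability that $X^k$ visits $\{\omega_{\bt{k}},\omega_{b_k^+}\}$ during one commute is at most $C\ell_1(n)^6(\gtt{k}+g_{k-1}^+)/g_k^+\leq e^{-\log^{\alpha-1}n/\ell_2(n)}$ by \eqref{eq:separation}, which is super-polynomially smaller than $\ell_1(n)^{-8}$; the Paley-Zygmund calculation can then be carried out after coupling $X^k$ with the walk obtained by deleting the edges to $\{\omega_{\bt{k}},\omega_{b_k^+}\}$, and the coupling error is negligible. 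This yields
\[
\Ek[\tau^2]\leq 2M\cdot\Ek[\tau],\qquad M:=C\,(\gtt{k}+g_{k-1}^+)\,n\,\ell_1(n)^3.
\]

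Putting the two estimates together, Paley-Zygmund gives
\[
\Pk\!\left(\tau\geq \tfrac12 \Ek[\tau]\right)\geq \frac{(\Ek[\tau])^2}{4\,\Ek[\tau^2]}\geq \frac{\Ek[\tau]}{8M}\geq \frac{c}{\ell_1(n)^8},
\]
which, after adjusting $C_2$, is the desired bound. The main technical obstacle is precisely this restriction of the max hitting time to the interior: since the total conductance at $\omega_{\bt{k}}$ and $\omega_{b_k^+}$ is of order $1/g_k^+$, those sites hold the walk for a time of order $g_k^+$, and one cannot afford the naive $\max$ over $\mathcal{G}_k^+$ but must genuinely exploit that the boundary is essentially never visited during a single commute.
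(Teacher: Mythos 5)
Your argument is correct and produces the right estimates, but it resolves the central technical obstacle by a different device than the paper. You rightly observe that the na\"ive bound $E_x[T_y^2]\leq 2(\max_z E_z[T_y])E_x[T_y]$ fails when the maximum runs over the boundary vertices $\omega_{\bt{k}},\omega_{b_k^+}$, whose hitting time of $\omega_{\at{k}}$ is of order $\gt{k}\geq g_k^+$ rather than $(\gtt{k}+g_{k-1}^+)n\ell_1(n)^3$, so these vertices must somehow be excluded. Your remedy is a coupling: the jump chain of $X^k$, conditioned on not jumping to the boundary, has exactly the jump law of the walk on the edge-deleted interior graph, both chains have unit exponential holding times at interior sites, so the two commute times agree on the event of no boundary visit, and that event fails with probability at most $C\ell_1(n)^6(\gtt{k}+g_{k-1}^+)/g_k^+$ (by the electrical calculation as in Lemma~\ref{lem:l1} started from $\omega_{a_k^+}$ and from $\omega_{\at{k}}$), which by \eqref{eq:separation} is super-polynomially smaller than the target $\ell_1(n)^{-8}$. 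Paley--Zygmund applied to the commute time of the reduced walk, with the first-moment lower bound coming from the commute-time identity and Lemma~\ref{lem:R_bounds}(iii), then finishes. The paper instead introduces the \emph{trace} process $\tilde X^k$ --- $X^k$ time-changed to only accumulate time on the interior $\tilde V_k$. This buys two things: $\tilde T_{\at{k}}^{a_k^+}\leq T_{\at{k}}^{a_k^+}$ holds deterministically (the trace only strips away time spent at the boundary), so there is no coupling-error term to track; and trace-form theory guarantees $\tilde X^k$ is a reversible chain with exactly the resistance metric $R_\mathrm{eff}^k|_{\tilde V_k\times\tilde V_k}$ and measure $c_k^{\alpha,\lambda/n}|_{\tilde V_k}$, so the commute-time identity for the reduced system comes for free, without re-deriving resistance bounds after deleting edges. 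The paper then concludes not with Paley--Zygmund but with the direct first-moment/Markov inequality \eqref{eq:sdads}, which is the same two-moment idea in a slightly different packaging.

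One small inaccuracy in your write-up: the displayed inequality $\Ek[\tau^2]\leq 2M\Ek[\tau]$ cannot hold for the commute time $\tau$ of $X^k$ itself --- that is precisely the boundary problem you identified --- it holds for the commute time of the edge-deleted (or trace) walk, and Paley--Zygmund must be applied there, with the resulting probability lower bound transferred back to $\tau$ through the coupling. Also, $R'_\mathrm{eff}\geq R_\mathrm{eff}^k$ after edge deletion, so for the max-hitting-time bound one must check that the proof of Lemma~\ref{lem:R_bounds}(i) (a chain of interior nearest-neighbour edges) survives the deletion; it does, since only edges incident to the boundary are removed.
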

\begin{proof} Let
\[T_{\at{k}}^{a_k^+}:=\inf\left\{t\geq T_{\at{k}}:X^k_t=\omega_{a_k^+}\right\}\]
be the first time that $X^k$ hits $\omega_{a_k^+}$ after hitting $\omega_{\at{k}}$. It then holds that
\begin{eqnarray}
{\Pk\left(\tau(i)-\tau(i-1)\leq t\:|\:\tau(i-1)<\infty\right)}&=&\Pk\left(T_{\at{k}}^{a_k^+}\leq \min\left\{t,T_{\{\bt{k},b_k^+\}}\right\}\:\vline\:X^k_0=\omega_{a_k^+}\right)\nonumber\\
&\leq &\Pk\left(T_{\at{k}}^{a_k^+}\leq t\:\vline\:X^k_0=\omega_{a_k^+}\right).\label{qq0}
\end{eqnarray}
To obtain a lower bound for this probability, we consider the process $\tilde{X}^k=(\tilde{X}^k_t)_{t\geq 0}$ obtained by observing $X^k$ on the set
\[\tilde{V}_k:=V\left(\mathcal{G}_k^+\right)\backslash\left\{\omega_{\bt{k}},\omega_{b_k^+}\right\}=\left\{\omega_{\at{k}},\dots,\omega_{a_k^+}\right\}.\]
Precisely, define an additive functional $\tilde{A}^k=(\tilde{A}^k_t)_{t\geq 0}$ by setting $\tilde{A}^k_t:=\int_0^t\mathbf{1}_{\{X^k_s\in\tilde{V}_k\}}ds$, and then let
\begin{equation}\label{xtildekdef}
\tilde{X}^k_t:=X^k_{(\tilde{A}^k)^{-1}_t},
\end{equation}
where $(\tilde{A}^k)^{-1}$ is the right-continuous inverse of $\tilde{A}^k$. By standard theory concerning the traces of Dirichlet and resistance forms (see \cite[Theorem 6.2.1]{FOT}, as described in \cite[Theorem 4.17]{Barlow}, and \cite[Theorem 8.4]{Kigami}), $\tilde{X}^k$ is the continuous-time Markov chain on $\tilde{V}_k$ corresponding to the resistance metric $R^k_{\mathrm{eff}}|_{\tilde{V}_k\times \tilde{V}_k}$ and measure $c_k^{\alpha,\lambda/n}|_{\tilde{V}_k}$. Defining $\tilde{T}_{\at{k}}^{a_k^+}$ from $\tilde{X}^k$ analogously to the definition of $T_{\at{k}}^{a_k^+}$ from $X^k$, we have that
\[\tilde{T}_{\at{k}}^{a_k^+}\leq {T}_{\at{k}}^{a_k^+}.\]
(Indeed, the difference is given by the time spent by $X^k$ in the set $\{\omega_{\bt{k}},\omega_{b_k^+}\}$ before ${T}_{\at{k}}^{a_k^+}$.) Hence, \eqref{qq0} implies
\begin{equation}\label{qq01}
\Pk\left(\tau(i)-\tau(i-1)\leq t\:|\:\tau(i-1)<\infty\right)
\leq \Pk\left(\tilde{T}_{\at{k}}^{a_k^+}\leq t\:\vline\:\tilde{X}^k_0=\omega_{a_k^+}\right).
\end{equation}
In estimating the right-hand side here, we use the following sequence of inequalities
\begin{equation}\label{eq:sdads}
\begin{split}
& \Ek\left(\tilde{T}_{\at{k}}^{a_k^+}\:\vline\:\tilde{X}^k_0=\omega_{a_k^+}\right)\\
&\leq t+\Ek\left(\mathbf{1}{\{\tilde{T}_{\at{k}}>t\}}\left(t+
{\textstyle \sum_{l=\at{k}}^{a_k^+}}\mathbf{1}_{\{\tilde{X}^k_t=\omega_l\}}\Ek\left(\tilde{T}_{\at{k}}^{a_k^+}\:\vline\:\tilde{X}^k_0=\omega_l\right)\right)\:\vline\:\tilde{X}^k_0=\omega_{a_k^+}\right)\\
&\quad +\Ek\left(\mathbf{1}{\{\tilde{T}_{\at{k}}\leq t,\tilde{T}_{\at{k}}^{a_k^+}>t\}}\left(t+
{\textstyle \sum_{l=\at{k}}^{a_k^+}}\mathbf{1}_{\{\tilde{X}^k_t=\omega_l\}}\Ek\left(\tilde{T}_{a_k^+}\:\vline\:\tilde{X}^k_0=\omega_l\right)\right)\:\vline\:\tilde{X}^k_0=\omega_{a_k^+}\right)\\
&\leq t+\Ek\left(\mathbf{1}{\{\tilde{T}_{\at{k}}^{a_k^+}>t\}}\left(t+
{\textstyle \sum_{l=\at{k}}^{a_k^+}}\mathbf{1}_{\{\tilde{X}^k_t=\omega_l\}}\Ek\left(\tilde{T}_{\at{k}}^{a_k^+}\:\vline\:\tilde{X}^k_0=\omega_l\right)\right)\:\vline\:\tilde{X}^k_0=\omega_{a_k^+}\right),
\end{split}
\end{equation}
where we write $\tilde{T}_{{a_k^+}}$ for the hitting time of $\omega_{a_k^+}$ by $\tilde{X}^k$, and we have  used the Markov property in the first inequality and the fact that $\tilde{T}_{\at{k}}^{a_k^+}\geq \tilde{T}_{a_k^+}$ in the second inequality. To make use of \eqref{eq:sdads}, we will now derive a lower bound for the expectation in the first line and an upper bound for the inner expectations in the final line.

Towards the second goal, we observe that by the commute time identity (again, see \cite[Theorem 2.63]{Barbook}, \eqref{eq:mass_control} and Lemma~\ref{lem:R_bounds}(i),
for any $j\in \{\at{k},\dots,a_k^+\}$,
\begin{eqnarray}
\lefteqn{\Ek\left(\tilde{T}_{{\at{k}}}^{{a_k^+}}\:\vline\:\tilde{X}^k_0=\omega_{j}\right)}\nonumber\\
&\leq &\Ek\left(\tilde{T}_{{\at{k}}}^{j}\:\vline\:\tilde{X}^k_0=\omega_{j}\right)+\Ek\left(\tilde{T}_{{\at{k}}}^{{a_k^+}}\:\vline\:\tilde{X}^k_0=\omega_{a_k^+}\right)\nonumber\\
&\leq& R^k_{\mathrm{eff}}\left(\omega_{j},{\omega_{\at{k}}}\right){\textstyle \sum_{l=\at{k}}^{a_k^+}}c^{\alpha,\lambda/n}_k(\omega_l) + R^k_{\mathrm{eff}}\left(\omega_{\at{k}},{\omega_{a_k^+}}\right){\textstyle \sum_{l=\at{k}}^{a_k^+}}c^{\alpha,\lambda/n}_k(\omega_l)\nonumber\\
&\leq & C(\gtt{k}+g_{k-1}^+)n\ell_1(n)^3,\label{qq1}
\end{eqnarray}
where $\tilde T_{{\at{k}}}^{j}$ is defined analogously to $\tilde T_{{\at{k}} }^{{a_k^+}}$. Turning to the lower bound for the first line in \eqref{eq:sdads}, again applying the commute time identity and Lemma~\ref{lem:R_bounds}(iii), yields
\begin{align}
\Ek\left(\tilde{T}_{{\at{k}}}^{{a_k^+}}\:\vline\:\tilde{X}^k_0=\omega_{a_k^+}\right)
&=R^k_{\mathrm{eff}}\left(\omega_{\at{k}},{\omega_{a_k^+}}\right){\textstyle \sum_{l=\at{k}}^{a_k^+}}c^{\alpha,\lambda/n}_k(\omega_l)\nonumber\\
&\geq \frac{g_{k-1}^++\gtt{k}}{C\ell_1(n)^5}{\textstyle  \sum_{l=\at{k}}^{a_k^+}}c^{\alpha,\lambda/n}_k(\omega_l).\label{eq:ct}
\end{align}
As we also have from \eqref{eq:mass_control} and the definition $c_k^{\alpha,\lambda/n}(\omega_i)=1$ for $i\in\{\bt{k},b_k^+\}$ that
\[{\textstyle \sum_{l=\at{k}}^{a_k^+}}c^{\alpha,\lambda/n}_k(\omega_l)={\textstyle \sum_{l=\bt{k}}^{b_k^+}}c^{\alpha,\lambda/n}_k(\omega_l)-2 \geq Cn,\]
we obtain by inserting this bound into the inequality at \eqref{eq:ct} that
\begin{equation}\label{qq3}
\Ek\left(\tilde{T}_{{\at{k}}}^{{a_k^+}}\:\vline\:\tilde{X}^k_0=\omega_{a_k^+}\right)
\geq C(\gtt{k}+g_{k-1}^+)n/\ell_1(n)^5.
\end{equation}
We now have all the estimates we need to continue with the proof. Namely, inserting \eqref{qq1} and \eqref{qq3} into \eqref{eq:sdads} gives
\begin{align*}
\frac{C(\gtt{k}+g_{k-1}^+)n}{\ell_1(n)^5}&\leq  t+\Pk\left(\tilde{T}_{\omega_{\at{k}}}^{\omega_{a_k^+}}>t\:\vline\:\tilde{X}^k_0=\omega_{a_k^+}\right)\left(t+C(\gtt{k}+g_{k-1}^+)n\ell_1(n)^3\right).
\end{align*}
Rearranging this inequality gives that for some $C_3, C_4>0$,
\[\Pk\left(\tilde{T}_{{\at{k}}}^{{a_k^+}}>t\:\vline\:\tilde{X}^k_0=\omega_{a_k^+}\right)\geq
\frac{C_3(\gtt{k}+g_{k-1}^+)n \ell_1(n)^{-5}-t}{t+C_4(\gtt{k}+g_{k-1}^+)n\ell_1(n)^3}.\]
Consequently, taking $t=\tfrac{1}{2}C_3(\gtt{k}+g_{k-1}^+)n \ell_1(n)^{-5}$, we obtain
\[\Pk\left(\tilde{T}_{{\at{k}}}^{{a_k^+}}>\tfrac{1}{2}C_3(\gtt{k}+g_{k-1}^+)n \ell_1(n)^{-5}\:\vline\:\tilde{X}^k_0=\omega_{a_k^+}\right)\geq \frac{C}{\ell_1(n)^{8}}.\]
In conjunction with the bound at \eqref{qq01}, the result follows.
\end{proof}

We can now prove the main result of this subsection.

\begin{proof}[Proof of \eqref{eq:not_too_slow}]
On $A_n^{\delta,K}$, by applying the previous two lemmas, we have that
\begin{align*}
&\Pk\left(S_{G-1}\geq ng_k^+/\ell_1(n)^{16}\right)\\
&\quad \geq  e^{-C_1\ell_1(n)^{-1}}\Pk\left(S_{G-1}\geq \frac{ng_k^+}{\ell_1(n)^{16}}\cond G\geq \frac{g_{k}^+ }{C_1 \ell_1(n)^7(\gtt{k}+g_{k-1}^+)}\right)\nonumber\\
&\quad \geq e^{-C_1\ell_1(n)^{-1}}\Pk\left(B\geq \frac{C_1g_k^+}{C_2(\gtt{k}+g_{k-1}^+)\ell_1(n)^{16}}\right),
\end{align*}
where $C_1$ and $C_2$ are the constants of Lemmas~\ref{lem:geom} and \ref{lem:exc}, respectively, and $B$ is a binomial random variable with $\lfloor g_{k}^+ /C_1 \ell_1(n)^7(\gtt{k}+g_{k-1}^+)\rfloor-1$ trials and success probability ${C_2}/{\ell_1(n)^{8}}$. In particular, recalling that, on $A_n^{\delta,K}$, it holds that ${g_{k}^+}/(\gtt{k}+g_{k-1}^+)\geq e^{(\log n)^{\alpha-1}/\ell_2(n)}$, for suitably large $n$ the expectation and variance of $B$ are both contained in the interval
\[\left[\frac 12\frac{C_2 g_{k}^+}{C_1(\gtt{k}+g_{k-1}^+) \ell_1(n)^{15}},\frac 32\frac{C_2 g_{k}^+}{C_1(\gtt{k}+g_{k-1}^+) \ell_1(n)^{15}}\right].\]
Thus, by the Chebyshev inequality and \eqref{eq:separation},
\begin{align*}
\Pk\left(B\geq \frac{C_1g_k^+}{C_2(\gtt{k}+g_{k-1}^+)\ell_1(n)^{16}}\right)&\geq 1-\Pk\left(|B-\Ek[B]|\geq \frac {\Ek[B]}2\right)\\
&\geq 1-\frac{4\text{Var}_k[B]}{\Ek[B]^2}\\
&\xrightarrow{n\to\infty}1.
\end{align*}
From this,
\begin{align}
  \label{eq:SonA}
\inf_{\omega\in A_n^{\delta,K}}  \Pk\left(S_{G-1}\geq ng_k^+/\ell_1(n)^{16}\right)\xrightarrow{n\to\infty}1,
\end{align}
and recalling~\eqref{eq:betatoS}, we complete the proof.
\end{proof}

\begin{rem} \label{rem:SonA}
Note that $S_{G-1}\geq ng_k^+/\ell_1(n)^{16}$ implies $T_{{a_k^+}}<\infty$ and depends only on the behaviour of the random walk after $T_{{a_k^+}}$. Therefore, \eqref{eq:SonA} remains valid for the random walk starting from $\omega_{a_k^+}$.
\end{rem}

\section{Proof of main results}\label{sec:proof}

\subsection{Exceedance time and extrema scaling}

In this subsection, we will prove Theorems~\ref{thm:mainnew1} and \ref{thm:mainnew2}. To begin with, we apply the barrier crossing time estimates of Section~\ref{sec:rw} to deduce the following one-sided version of Theorem~\ref{thm:mainnew1}(a).

\begin{proposition} \label{prop:mainnew1}
Fix $\alpha>1$ and $\lambda\in\mathbb{R}$. As $n\rightarrow\infty$,
\[d_{U}\left(\left(n^{-1}L\left(n^{-1}\Delta^+_{nx}\right)\right)_{x\geq 0},\left(m_{n,+}(x)\right)_{x\geq 0}\right)\rightarrow 0\]
in $\mathbb{P}^{\alpha,\lambda/n}$-probability.
\end{proposition}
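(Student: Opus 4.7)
The proposition asks to compare the c\`adl\`ag non-decreasing step function $x\mapsto n^{-1}L(n^{-1}\Delta^+_{nx})$ with $m_{n,+}$, which on the slab $I_k:=[\omega_{a_{k-1}^+}/n,\omega_{a_k^+}/n)$ between consecutive record locations is constant and equal to $n^{-1}L(g_{k-1}^+)$. My plan is to squeeze the exceedance-time process into the same level on each $I_k$, using the barrier-crossing time estimates of Section~\ref{sec:rw} together with slow variation of $L$, and to handle the initial slab $[0,\omega_{a_0^+}/n]$ by sending $\delta\downarrow 0$.

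After fixing $T,\varepsilon>0$, I would first appeal to Proposition~\ref{prop:Awhp}, \eqref{agconv} and \eqref{gto0} to choose $K$ large and $\delta$ small so that, with $\mathbf P$-probability at least $1-\varepsilon$, the environment lies in $A_n^{\delta,K}$, the slabs $I_1,\dots,I_K$ cover $[\omega_{a_0^+}/n,T]$, and $n^{-1}L(g_0^+)<\varepsilon$. On each $I_k$ with $1\leq k\leq K$, I would then establish, uniformly in $x$ and with quenched probability $\to 1$ uniformly over $\omega\in A_n^{\delta,K}$,
\[
\frac{ng_{k-1}^+}{\ell_1(n)^{16}}\ \leq\ \Delta^+_{nx}\ \leq\ C\,ng_{k-1}^+\,\ell_1(n)^{C}.
\]
The lower bound follows from the monotonicity $\Delta^+_{nx}\geq\Delta^+_{\omega_{a_{k-1}^+}}=\beta_{k-1}^+$ together with \eqref{eq:not_too_slow} applied at index $k-1$, with a minor adaptation handling the boundary case $k=1$. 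For the upper bound I would use $\Delta^+_{nx}\leq\alpha_k^+$ combined with a sharpening of \eqref{eq:not_too_fast}: applying Rayleigh's monotonicity to the nearest-neighbour rightward path from $0$ to $\omega_{a_k^+}$, together with \eqref{eq:NN+} and \eqref{eq:space_scaling}, gives $R^k_{\mathrm{eff}}(0,\omega_{a_k^+})\leq Cg_{k-1}^+\ell_1(n)$, avoiding the $\gtt{k}$ contribution present in the generic estimate of Lemma~\ref{lem:R_bounds}(i); re-running the arguments of Lemmas~\ref{lem:l1}--\ref{l2} with this improved resistance bound then yields $\alpha_k^+\leq Cng_{k-1}^+\ell_1(n)^{C}$ with high quenched probability.

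With the two-sided bound in hand, I would invoke slow variation of $L$ via \eqref{ldef}: the elementary estimate $(\log(uc))^{1/\alpha}-(\log u)^{1/\alpha}=O(\log c/\log^{1-1/\alpha}u)$, combined with $\log u\sim\log^{\alpha}n$ for $u\asymp g_{k-1}^+$ and $c\in[\ell_1(n)^{-C},\ell_1(n)^{C}]$, gives $n^{-1}|L(uc)-L(u)|=O(\log\log n/\log^{\alpha-1}n)\to 0$, so that $n^{-1}L(n^{-1}\Delta^+_{nx})=n^{-1}L(g_{k-1}^+)+o(1)=m_{n,+}(x)+o(1)$ uniformly on $I_k$. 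The initial slab $[0,\omega_{a_0^+}/n]$ is handled by the choice of $\delta$: both $m_{n,+}$ and an analogous upper bound on $n^{-1}L(n^{-1}\Delta^+_{nx})$, obtained by running the same rightward-path argument with $k=0$, are at most $2\varepsilon$ there. The main obstacle is this upper bound on $\alpha_k^+$: \eqref{eq:not_too_fast} as stated only gives $(g_{k-1}^++\gtt{k})n\ell_1(n)^{4}$, and since $\gtt{k}$ can exceed $g_{k-1}^+$ with positive probability in the Poisson limit, the resulting $L$-image would differ from $n^{-1}L(g_{k-1}^+)$ by an additive amount of order $n$; it is therefore essential to recover a genuinely one-sided bound using only the right-hand path resistance.
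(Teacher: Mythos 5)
Your proof follows essentially the same route as the paper's: partition space into the slabs determined by the records $\omega_{a_k^+}$, sandwich $\Delta^+_{nx}$ between $\beta_{k-1}^+$ and $\alpha_k^+$ there, use the barrier-crossing estimates of Section~\ref{sec:rw} to place $\Delta^+_{nx}$ within a polylogarithmic factor of $n g_{k-1}^+$, kill those factors via slow variation of $L$, and handle the initial slab by sending $\delta\downarrow 0$ and the tail by taking $K$ large via \eqref{adiv}. The two arguments differ only by the (harmless) shift in indexing and the way in which the polylog factors are dispensed with — the paper states the clean implication $n^{-1}L(x_n)\to x \Rightarrow n^{-1}L(x_n\ell_1(n)^\theta)\to x$, which is the same elementary computation you carry out.

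The point you raise at the end is well taken, and in fact it is a genuine imprecision in the paper's write-up rather than in your argument. The paper's event $C_n^{\delta,K}$ requires $\alpha_k^+<n g_{k-1}^+\ell_1(n)^{4}$, and the claim that $\mathbb{P}^{\alpha,\lambda/n}(C_n^{\delta,K})\to 1$ is attributed to Theorem~\ref{thm:annealed}. But \eqref{eq:not_too_fast} only gives $\alpha_k^+<(g_{k-1}^++\gtt{k})n\ell_1(n)^{4}$, and on the event $\{\gt{k}^{\Pois}_{\mathrm{(prev)}}>g_{k-1}^{+,\Pois}\}$, which has positive probability, the $L$-image of the right-hand side converges to $\gtt{k}^{\Pois}\neq g_{k-1}^{+,\Pois}$, so the stated theorem is not literally sufficient. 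Your remedy — bounding $R^k_{\mathrm{eff}}(0,\omega_{a_k^+})\leq\sum_{l=1}^{a_k^+}r^{\alpha,\lambda/n}(\omega_{l-1},\omega_l)\leq Ng_{k-1}^+e^{2|\lambda|\ell_3(n)}$ via the nearest-neighbour rightward path, \eqref{eq:NN+} and \eqref{eq:space_scaling}, and then rerunning Lemmas~\ref{lem:l1} and \ref{l2} with this one-sided resistance estimate — is exactly the sharpening that the paper's $C_n^{\delta,K}$ presupposes; the commute-time argument of Lemma~\ref{l2} needs only $R^k_{\mathrm{eff}}(0,\{\omega_{\bt{k}},\omega_{a_k^+},\omega_{b_k^+}\})$, which is dominated by this one-sided quantity, so the $\gtt{k}$ contribution in Lemma~\ref{lem:R_bounds}(i) (which is a bound on the \emph{diameter} of $\tilde V_k$) was never needed for the forward exceedance time. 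So your proposal is correct, and is the argument the paper implicitly relies on.
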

\begin{proof} Fix $\delta>0$ and $K\in\mathbb{N}$, and define $(\alpha_k^+,\beta_k^+)_{k\geq 0}$ as at \eqref{eq:alphadef} and \eqref{eq:betadef}.
Note that if $nx\in [\omega_{a_k^+},\omega_{a_{k+1}^+})$ for some $k\geq 1$, then $\Delta_{nx}^+\in[\beta_k^+,\alpha_{k+1}^+]$. Hence, on the event
\[C_n^{\delta,K}:={\textstyle\bigcap_{k=1}^K}\left\{\alpha_k^+<ng_{k-1}^+\ell_1(n)^\A,\:\beta_k^+>ng_{k}^+\ell_1(n)^{-\B}\right\},\]
it holds that: if $nx\in [\omega_{a_k^+},\omega_{a_{k+1}^+})$ for some $k\in\{1,\dots,K-1\}$, then
\[n^{-1}L\left(g_{k}^+\ell_1(n)^{-\B}\right)\leq n^{-1}L\left(n^{-1}\Delta_{nx}^+\right)\leq n^{-1}L\left(g_{k}^+\ell_1(n)^\A\right).\]
Moreover, on $C_n^{\delta,K}$, if $nx< \omega_{a_1^+}$, then
\[ n^{-1}L\left(n^{-1}\Delta_{nx}^+\right)\leq n^{-1}L\left(g_{0}^+\ell_1(n)^\A\right).\]
Next, observe that if  $nx\in [\omega_{a_k^+},\omega_{a_{k+1}^+})$ for some $k\geq 1$, then $m_n^+(x)=n^{-1}L(g_k^+)$, and if $nx<\omega_{a_1^+}$, then $m_n^+(x)\leq n^{-1}L(g_0^+)$. Consequently, on $C_n^{\delta,K}$,
\begin{eqnarray*}
\lefteqn{\sup_{x<n^{-1} \omega_{a_K^+}}\left| n^{-1}L\left(n^{-1}\Delta_{nx}^+\right)-m_n^+(x)\right|}\\
&\leq& n^{-1}L\left(g_{0}^+\ell_1(n)^\A\right)+\sup_{k=1,\dots, K}
 \left(n^{-1}L\left(g_{k}^+\ell_1(n)^\A\right)- n^{-1}L\left(g_{k}^+\ell_1(n)^{-\B}\right)\right).
 \end{eqnarray*}
That this estimate holds with high probability as $n\rightarrow\infty$, i.e.\
\[\lim_{n\rightarrow\infty}\mathbb{P}^{\alpha,\lambda/n}\left(C_n^{\delta,K}\right)=1,\]
follows from Proposition~\ref{prop:Awhp} and Theorem~\ref{thm:annealed}.

Recall from \eqref{agconv} that the environments for different values of $n$ are coupled in such a way that $n^{-1}L(g_k^+) \to g_k^{+,\Pois}>0$ as $n\to\infty$. Using the elementary fact that, for any $\theta\in\R$ and deterministic sequence $(x_n)_{n\geq 0}$,
\begin{align*}
\lim_{n\to\infty}n^{-1}L(x_n)= x\in(0,\infty)\qquad\implies \qquad \lim_{n\to\infty}n^{-1}L(x_n\ell_1(n)^\theta)=x,
\end{align*}
it follows that
\[\left(n^{-1}L\left(g_k^+\ell_1(n)^{-\B}\right),n^{-1}L\left(g_k^+\ell_1(n)^\A\right)\right)_{k\geq 1}\rightarrow \left(g_k^{+,\Pois},g_k^{+,\Pois}\right)_{k\geq 1}.\]
Combining these observations, we conclude that, for any $x_0,\varepsilon\in(0,\infty)$,
\begin{eqnarray*}
\lefteqn{\mathbb{P}^{\alpha,\lambda/n}\left(\sup_{x\leq x_0}\left| n^{-1}L\left(n^{-1}\Delta_{nx}^+\right)-m_n^+(x)\right|>\varepsilon\right)}\\
&\leq &  \mathbb{P}^{\alpha,\lambda/n}\left(n^{-1}L\left(g_{0}^+\ell_1(n)^\A\right)+\sup_{k=1,\dots, K}
 \left(n^{-1}L\left(g_{k}^+\ell_1(n)^\A\right)- n^{-1}L\left(g_{k}^+\ell_1(n)^{-\B}\right)\right)>\varepsilon\right)\\
 &&+\mathbb{P}^{\alpha,\lambda/n}\left(\left(C_n^{\delta,K}\right)^c\right)+ \mathbb{P}^{\alpha,\lambda/n}\left(n^{-1} \omega_{a_K^+}\leq x_0\right)\\
 &\rightarrow&\mathbf{P}\left(g_0^{+,\Pois}>\varepsilon\right)+\mathbf{P}\left(a_K^{+,\Pois}\leq x_0\right).
\end{eqnarray*}
Finally, by \eqref{gto0}, the first probability above can be made arbitrarily small by taking $\delta$ small. Also, for fixed $\delta$, we can apply \eqref{adiv} to deduce that $\mathbf{P}(a_K^{+,\Pois}\leq x_0)$ can be made arbitrarily small by taking $K$ large. Hence we conclude that
\[\lim_{n\rightarrow\infty}\mathbb{P}^{\alpha,\lambda/n}\left(\sup_{x\leq x_0}\left| n^{-1}L\left(n^{-1}\Delta_{nx}^+\right)-m_n^+(x)\right|>\varepsilon\right)=0,\]
which implies the result.
\end{proof}

\begin{proof}[Proof of Theorem~\ref{thm:mainnew1}(a)]
From the symmetry of the situation, Proposition~\ref{prop:mainnew1} implies a corresponding version for the exceedance times on the negative axis. Together with Proposition~\ref{prop:mainnew1}, this implies Theorem~\ref{thm:mainnew1}(a).
\end{proof}

\begin{proof}[Proof of Theorem~\ref{thm:mainnew2}(a)] Since $(L(r^{\alpha,0}(\omega_j,\omega_{j+1})))_{j\in\mathbb{Z}}$ are i.i.d.\ random variables satisfying \eqref{ltail} the result is an immediate application of \cite[Proposition 4.20]{Res87} (see also \cite{Lamperti}).
\end{proof}

For the remaining parts of the theorems, we will apply a standard fact from \cite{Whitt} about the continuity of the operation of taking an inverse. Indeed, it is known that if $f_n\rightarrow f$ in the Skorohod $J_1$ topology (or indeed the weaker Skorohod $M_2$ topology), where $f_n$ and $f$ are both unbounded functions in $D([0,\infty),[0,\infty))$, then the right-continuous inverses, defined similarly to \eqref{mnpinv}, satisfy $f_n^{-1}\rightarrow f^{-1}$ with respect to the Skorohod $M_1$ topology whenever $f^{-1}(0)=0$, see \cite[Theorem 13.6.1]{Whitt}. (Note that, by the same result, the operation of taking a right-continuous inverse is also measurable with respect to the relevant topologies.)

\begin{proof}[Proof of Theorem~\ref{thm:mainnew1}(b) and Theorem~\ref{thm:mainnew2}(b)] From Theorem~\ref{thm:mainnew1}(a) and Theorem~\ref{thm:mainnew2}(a), we have that
\begin{eqnarray*}
\lefteqn{\left(n^{-1}L\left(n^{-1}\Delta^-_{nx}\right),n^{-1}L\left(n^{-1}\Delta^+_{nx}\right),m_{n,-}(x),m_{n,+}(x)\right)_{x\geq 0}}\\
&\rightarrow& \left(m_-(x),m_+(x),m_-(x),m_+(x)\right)_{x\geq 0}\hspace{80pt}
\end{eqnarray*}
in distribution with respect to topology on $D([0,\infty),\mathbb{R}^4)$ induced by $d_{J_1}$. Applying the result of \cite{Whitt} that was introduced before the proof, to deduce Theorem~\ref{thm:mainnew1}(b) and Theorem~\ref{thm:mainnew2}(b), it will thus be sufficient to check that, almost-surely, all the functions above are unbounded and that $m_-^{-1}(0)=0=m_+^{-1}(0)$.

Now, since the values of $(m_+(x))_{x\in(0,\infty)}$ and $(m_-(x))_{x\in(0,\infty)}$ correspond to $g_k^{+,\Pois}$ and $g_k^{-,\Pois}$ respectively, they are almost-surely unbounded by \eqref{adiv} and satisfy $m_-^{-1}(0)=0=m_+^{-1}(0)$ by \eqref{gok}. Then, the same hold for $m_{n,+}$ and $m_{n,-}$ by Theorem~\ref{thm:mainnew2}(a) and the Skorohod representation theorem. Finally, we know from Theorem~\ref{thm:mainnew1}(a) that
\[n^{1/2}/L(n^{-1}\Delta^+_n)\rightarrow 0\]
in $\mathbb{P}^{\alpha,\lambda/n}$-probability as $n\rightarrow\infty$. Clearly this implies that $\Delta^+_n\rightarrow \infty$ in $\mathbb{P}^{\alpha,\lambda/n}$-probability. Hence there exists a subsequence $(n_i)_{i\geq 1}$ along which $\Delta^+_{n_i}$ diverges almost-surely (by \cite[Lemma 4.2]{Kall}, for example). Since $(\Delta^+_x)_{x\geq 0}$ is non-decreasing, it follows that $(\Delta_x^+)_{x\geq 0}$ diverges almost-surely. Consequently, for each $n$, the same is true of $(n^{-1}L(n^{-1}\Delta^+_{nx}))_{x\geq 0}$. The same argument gives the corresponding result for $(n^{-1}L(n^{-1}\Delta^-_{nx}))_{x\geq 0}$, and this completes the proof.
\end{proof}

\subsection{Finite dimensional distribution convergence}

We now turn to the proof of Theorem~\ref{thm:mainnew3}, starting with the one-dimensional marginal. Specifically, we will establish the following proposition.

\begin{proposition}\label{onedm}
For any $0\leq s<t$ and continuous bounded function $f$, as $n\rightarrow\infty$,
\[\sup_{\omega_i\in [nm_{n,-}^{-1}(t),nm_{n,+}^{-1}(t)]}\left|{E}^{\alpha,\lambda/n}_\omega\left(f\left(n^{-1}{X}_{nL^{-1}(nt)-nL^{-1}(ns)}\right)\:\vline\:X_0=\omega_i\right)-\frac{\int_{m_{n,-}^{-1}(t)}^{m_{n,+}^{-1}(t)}e^{2\lambda x}f(x)dx}{\int_{m_{n,-}^{-1}(t)}^{m_{n,+}^{-1}(t)}e^{2\lambda x}dx}\right|\]
converges to zero in $\mathbf{P}$-probability.
\end{proposition}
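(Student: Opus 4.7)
The plan is to show that by time $T:=nL^{-1}(nt)-nL^{-1}(ns)$ the walk $X$ has equilibrated on the interval between the two records at scale $L^{-1}(nt)$ flanking the origin, while still being unable to jump across either of them. Given $\varepsilon>0$, by Proposition~\ref{prop:Ewhp} I would pick $\delta,\eta>0$ small and $K\in\N$ large so that $\mathbf{P}(E_n^{\delta,\eta,K}\cup\tilde E_n^{\delta,\eta,K})\ge 1-\varepsilon$ for $n$ sufficiently large, and reduce by symmetry to $E_n^{\delta,\eta,K}$. On this event, $[nm_{n,-}^{-1}(t),nm_{n,+}^{-1}(t)]=[\omega_{\at{k}},\omega_{a_k^+}]$ for the unique $k\in\{1,\dots,K\}$ satisfying $n^{-1}L(g_{k-1}^+)\vee n^{-1}L(\gtt{k})\le t-\eta<t+\eta\le n^{-1}L(g_k^+)$. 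Using $L^{-1}(u)=e^{\log^\alpha(u)}$ and $\alpha>1$, the ratio $L^{-1}(ns)/L^{-1}(nt)$ decays super-polynomially in $\log n$ for any $s\in[0,t)$, so $T\sim nL^{-1}(nt)$.

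Next I would couple $X$ with the auxiliary walk $X^k$ on $\mathcal G_k^+$ from Subsection~\ref{sec31}, which agrees with $X$ up to the first exit $\tau_\mathrm{exit}$ of the interior $\{\omega_{\at{k}},\dots,\omega_{a_k^+}\}$, and show that $\tau_\mathrm{exit}>T$ holds with high probability uniformly in starting point. A short strong Markov decomposition at the first hit of $\omega_{a_k^+}$ (with Lemma~\ref{lem:R_bounds} and \eqref{eq:separation} bounding the probability of exiting before this hit) combined with Remark~\ref{rem:SonA} (bounding $\Pk(\tau_\mathrm{exit}\le T\cond X^k_0=\omega_{a_k^+})$) yields $\sup_{\omega_i\in[\omega_{\at{k}},\omega_{a_k^+}]}\Pk(\tau_\mathrm{exit}\le T\cond X^k_0=\omega_i)\to 0$ on $A_n^{\delta,K}$; indeed, the barrier-crossing scale $ng_k^+/\ell_1(n)^{16}\ge nL^{-1}(n(t+\eta))/\ell_1(n)^{16}$ dominates $T\sim nL^{-1}(nt)$ super-polynomially in $\log n$ thanks to $\alpha>1$. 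The proposition therefore reduces to approximating $\Ek(f(n^{-1}X^k_T)\cond X^k_0=\omega_i)$ uniformly in $\omega_i$.

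The heart of the argument is a fast mixing estimate for $X^k$. The chain is reversible with stationary distribution $\pi_k\propto c_k^{\alpha,\lambda/n}$, and since the two boundary atoms $\omega_{\bt{k}},\omega_{b_k^+}$ carry total stationary mass $2/Z_k=O(n^{-1})$, I expect mixing from an interior start to be governed by the interior electrical structure. Formally I would pass to the trace of $X^k$ onto $\{\omega_{\at{k}},\dots,\omega_{a_k^+}\}$ (which preserves effective resistances) and apply the standard reversible bounds $t_\mathrm{mix}\le Ct_\mathrm{rel}\log(1/\pi_{k,*})$ together with $t_\mathrm{rel}\le R_\mathrm{eff}^{\max}\cdot Z_k$; Lemma~\ref{lem:R_bounds}(i), \eqref{eq:mass_control} and \eqref{eq:pointwise} then yield
\[t_\mathrm{mix}(X^k)\le Cn(g_{k-1}^++\gtt{k})\ell_1(n)^{\alpha+4}\le CnL^{-1}(n(t-\eta))\ell_1(n)^{\alpha+4}.\]
The ratio $T/t_\mathrm{mix}(X^k)$ is bounded below by $L^{-1}(nt)/(CL^{-1}(n(t-\eta))\ell_1(n)^{\alpha+4})$, and because $L^{-1}(nt)/L^{-1}(n(t-\eta))=\exp(\log^\alpha(nt)-\log^\alpha(n(t-\eta)))\ge \exp(c\log^{\alpha-1}(n))$ for some $c=c(\eta,t)>0$ and $n$ large, this ratio diverges super-polynomially in $\log n$. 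Hence $\sup_{\omega_i}\|P^T_k(\omega_i,\cdot)-\pi_k\|_\mathrm{TV}\to 0$, so $\Ek(f(n^{-1}X^k_T)\cond X^k_0=\omega_i)=\sum_l\pi_k(\omega_l)f(\omega_l/n)+o(1)$ uniformly in $\omega_i$.

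To identify the limit, $c_k^{\alpha,\lambda/n}(\omega_l)=c^{\alpha,\lambda/n}(\omega_l)$ for $l\in\{\at{k},\dots,a_k^+\}$ and the two boundary atoms contribute only $2$ to $Z_k$, negligible relative to $Z_k\sim n\mathbf E[c^{\alpha,0}(\omega_0)]\int_{m_{n,-}^{-1}(t)}^{m_{n,+}^{-1}(t)}e^{2\lambda x}\dd x$. Writing $\sum_{l\in\{\at{k},\dots,a_k^+\}}c^{\alpha,\lambda/n}(\omega_l)g(\omega_l/n)=n\int_{m_{n,-}^{-1}(t)}^{m_{n,+}^{-1}(t)}g(x)\mu_n^{\alpha,\lambda,\omega}(\dd x)$ and applying Proposition~\ref{prop:inv_meas} with $g=f$ in the numerator and $g=1$ in the denominator, the common factor $\mathbf E[c^{\alpha,0}(\omega_0)]$ cancels and we recover the target ratio $\int e^{2\lambda x}f(x)\dd x/\int e^{2\lambda x}\dd x$. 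The main obstacle is the mixing step above: one must argue that the large resistance of the edges joining the interior to $\{\omega_{\bt{k}},\omega_{b_k^+}\}$ in $\mathcal G_k^+$ does not spoil the spectral gap relevant to an interior start. I expect the trace-to-interior argument (or equivalently a QSD/coupling argument on the interior) to handle this cleanly, with the super-polynomial separation of consecutive record scales in the slowly-varying regime providing the decisive slack.
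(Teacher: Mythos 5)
Your proposal follows essentially the same route as the paper's proof: reduce to $E_n^{\delta,\eta,K}$ via Proposition~\ref{prop:Ewhp}, couple $X$ with the auxiliary walk $X^k$ (and its interior trace) up to the first exit of $\{\omega_{\at{k}},\dots,\omega_{a_k^+}\}$, use the barrier-crossing bound via $S_{G-1}$ together with Remark~\ref{rem:SonA} to show the exit time dominates $T$ uniformly in the starting point, invoke a reversible spectral-gap mixing estimate with time scale $R_\mathrm{eff}^{\max}\cdot Z_k$ (the paper's Appendix~B proves exactly this via the effective-resistance lower bound on the spectral gap), and finish by applying Proposition~\ref{prop:inv_meas} to identify the limit. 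The only cosmetic difference is that you phrase the mixing bound as $t_\mathrm{mix}\le C\,t_\mathrm{rel}\log(1/\pi_*)$ while the paper keeps the factor $\pi_*^{-1/2}$ multiplicatively in front of $e^{-t/t_\mathrm{rel}}$; both give the same super-polynomial-in-$\log n$ slack.
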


\begin{proof}[Proof of Proposition~\ref{onedm}] We start by considering the case when $s=0$ and $X_0=0$. Fix $\delta,\eta>0$ and $K\in\mathbb{N}$. Recall that in Definition~\ref{def:E}, $E_n^{\delta,\eta,K}$ was defined to be the event that $A_n^{\delta,K}$ holds and, moreover,
\begin{equation}\label{ineq1}
n^{-1}L(\gtt{k}),n^{-1}L(g_{k-1}^+)\leq t-\eta<t+\eta\leq n^{-1}L(g_{k}^+)
\end{equation}
for some $k\in\{1,\dots,K\}$, where $t$ is as in Proposition~\ref{onedm}. The previous inequalities imply that
\begin{equation}\label{ddddd}
m_{n,-}^{-1}(t)=n^{-1}\omega_{\at{k}},\qquad m_{n,+}^{-1}(t)=n^{-1}\omega_{a_k^+}.
\end{equation}
For the time being we will suppose that $E_n^{\delta,\eta,K}$ holds, and write $k$ for the index such that \eqref{ineq1} holds. Recall the definition of the random walk $\tilde{X}^k$ on $\tilde{V}_k=\{\omega_{\at{k}},\dots,\omega_{a_k^+}\}$ from \eqref{xtildekdef}. By construction, $\tilde{X}^k$ behaves the same as $X^k$ up until the time that the latter process exits $\tilde{V}_k$, and thus $\tilde{X}^k$ can be coupled with $X$ so that it follows the same path up until the stopping time $\min\{\betat{k},{\beta}_k^+\}$ (assuming both processes are started from 0). Consequently,
\begin{eqnarray*}
\lefteqn{\left|{E}^{\alpha,\lambda/n}_\omega\left(f\left(n^{-1}{X}_{nL^{-1}(nt)}\right)\right)-E_k\left(f\left(n^{-1}\tilde{X}^k_{nL^{-1}(nt)}\right)\right)\right|}\\
&\leq &\|f\|_\infty{P}^{\alpha,\lambda/n}_\omega\left(\min\{\betat{k},{\beta}_k^+\}\leq nL^{-1}(nt)\right)\\
&\leq &\|f\|_\infty{P}_k\left(S_{G-1}\leq nL^{-1}(nt)\right),
\end{eqnarray*}
where we define $S_{G-1}$ as in Subsection~\ref{sec33}. Noting from \eqref{ineq1} that, for large $n$,
\begin{equation}\label{lntbound}
nL^{-1}(nt)\leq\frac{ng_k^+L^{-1}(nt)}{L^{-1}(n(t+\eta))}=ng_k^+e^{\log^\alpha(nt)-\log^\alpha(n(t+\eta))}\leq ng_k^+e^{-C\log^{\alpha-1}(nt)}< \frac{ng_k^+}{\ell_1(n)^{\B}},
\end{equation}
we find that
\begin{equation}\label{abound}
\left|{E}^{\alpha,\lambda/n}_\omega\left(f\left(n^{-1}{X}_{nL^{-1}(nt)}\right)\right)-E_k\left(f\left(n^{-1}\tilde{X}^k_{nL^{-1}(nt)}\right)\right)\right|\leq
\|f\|_\infty{P}_k\left(S_{G-1}<\frac{ng_k^+}{\ell_1(n)^{\B}}\right).
\end{equation}
Next, we apply the following bound concerning the mixing of the Markov chain $\tilde{X}^k$, which will be proved in Appendix~\ref{app:mixingt}: if $\tilde{\pi}^k$ is the invariant probability measure of $\tilde{X}^k$ and
\begin{align*}
  \tilde{t}^k_{\mathrm{mix}}:=\sup_{\at{k}\leq i,j\leq a_k^+}R_\mathrm{eff}^k(\omega_i,\omega_j) \sum_{i=\at{k}}^{a_k^+}c^{\alpha,\lambda/n}(\omega_i),
\end{align*}
then for any continuous bounded function $f$,
\begin{equation}\label{mixingt}
  \left|E_k\left(f\left(n^{-1}\tilde{X}^k_{nL^{-1}(nt)}\right)\right)-\int f(n^{-1}x)\tilde{\pi}^k(dx)\right|\leq \|f\|_\infty e^{-nL^{-1}(nt)/\tilde{t}^k_{\mathrm{mix}}}\sup_{i=\at{k},...,a_k^+} \tilde \pi^k(\omega_i)^{-1/2}.
  \end{equation}
Note that on the set $\tilde{V}_k$, $\tilde{\pi}^k(\{\omega_i\})\propto c^{\alpha,\lambda/n}_k(\omega_i)= c^{\alpha,\lambda/n}(\omega_i)$, and so
\begin{equation}\label{measureeq}
\int f(n^{-1}x)\tilde{\pi}^k(dx)=\frac{\sum_{i=\at{k}}^{a_k^+}f(n^{-1}\omega_i)c^{\alpha,\lambda/n}(\omega_i)}{\sum_{i=\at{k}}^{a_k^+}c^{\alpha,\lambda/n}(\omega_i)}=\frac{    \int_{m_{n,-}^{-1}(t)}^{m_{n,+}^{-1}(t)}f(x)\mu_n^{\alpha,\lambda,\omega}(\dd x)}{    \int_{m_{n,-}^{-1}(t)}^{m_{n,+}^{-1}(t)}\mu_n^{\alpha,\lambda,\omega}(\dd x)},
\end{equation}
where $\mu_n^{\alpha,\lambda,\omega}$ was defined at \eqref{mundef}. On $E_n^{\delta,\eta,K}$, due to \eqref{eq:pointwise} and \eqref{eq:mass_control}, it holds that
\begin{align*}
\sup_{i=\at{k},...,a_k^+} \frac{1}{\tilde \pi^k(\omega_i)}=\sum_{i=\at{k}}^{a_k^+}c^{\alpha,\lambda/n}(\omega_i)\sup_{i=\at{k},...,a_k^+}\frac{1}{c^{\alpha,\lambda/n}(\omega_i)}\leq c n\ell_1(n)^2 e^{\log^{\alpha+1}n}.
\end{align*}
Furthermore, on $E_n^{\delta,\eta,K}$, we have the bound
\[\tilde{t}^k_{\mathrm{mix}}= \sup_{\at{k}\leq i,j\leq a_k^+}R_\mathrm{eff}^k(\omega_i,\omega_j) \sum_{i=\at{k}}^{a_k^+}c^{\alpha,\lambda/n}(\omega_i)\leq  C n(g_{k-1}^++\gtt{k})\ell_1(n)^3,\]
where we have applied \eqref{eq:mass_control} and Lemma~\ref{lem:R_bounds} to deduce the inequality. Therefore, again recalling \eqref{ineq1}, for large $n$,
\begin{equation}\label{ggg}
\frac{nL^{-1}(nt)}{\tilde{t}^k_{\mathrm{mix}}}\geq \frac{L^{-1}(nt)}{C(g_{k-1}^++\gtt{k})\ell_1(n)^3}\geq
\frac{L^{-1}(nt)}{CL^{-1}(n(t-\eta))\ell_1(n)^3}\geq e^{c\log^{\alpha-1} (nt)},
\end{equation}
and combining this bound with \eqref{abound}, \eqref{mixingt} and \eqref{measureeq} yields that
\begin{align*}
\lefteqn{\left|{E}^{\alpha,\lambda/n}_\omega\left(f\left(n^{-1}{X}_{nL^{-1}(nt)}\right)\right)-\frac{    \int_{m_{n,-}^{-1}(t)}^{m_{n,+}^{-1}(t)}f(x)\mu_n^{\alpha,\lambda,\omega}(\dd x)}{    \int_{m_{n,-}^{-1}(t)}^{m_{n,+}^{-1}(t)}\mu_n^{\alpha,\lambda,\omega}(\dd x)}\right|\times\mathbf{1}_{E_n^{\delta,\eta,K}}(\omega)}\\
&\leq\|f\|_\infty\left({P}_k\left(S_{G-1}<\frac{ng_k^+}{\ell_1(n)^{\B}}\right)\times\mathbf{1}_{E_n^{\delta,\eta,K}}(\omega)+ c n\ell_1(n)^2 e^{\log^{\alpha+1}n-Ce^{c\log^{\alpha-1} (nt)}}\right),
\end{align*}
where, at the expense of including the indicator function $\mathbf{1}_{E_n^{\delta,\eta,K}}$, we have dropped our assumption that $E_n^{\delta,\eta,K}$ holds. Applying Proposition~\ref{prop:inv_meas} and that
\[{P}_k\left(S_{G-1}<\frac{ng_k^+}{\ell_1(n)^{\B}}\right)\times\mathbf{1}_{E_n^{\delta,\eta,K}}\rightarrow 0\]
in $\mathbf{P}$-probability, which follows from~\eqref{eq:SonA}, we can thus conclude that
\[\left|{E}^{\alpha,\lambda/n}_\omega\left(f\left(n^{-1}{X}_{nL^{-1}(nt)}\right)\right)-
\frac{\int_{m_{n,-}^{-1}(t)}^{m_{n,+}^{-1}(t)}e^{2\lambda x}f(x)dx}{\int_{m_{n,-}^{-1}(t)}^{m_{n,+}^{-1}(t)}e^{2\lambda x}dx}\right|\times\mathbf{1}_{E_n^{\delta,\eta,K}}(\omega)\rightarrow0\]
in $\mathbf{P}$-probability.

Our subsequent step is to describe how to extend the result of the previous paragraph to hold for $0\leq s<t$ and arbitrary starting points in $[nm_{n,-}^{-1}(t),nm_{n,+}^{-1}(t)]$. Firstly, for such $s$, we have that: for any $\eta>0$, for large $n$,
\[L^{-1}\left(n(t-\eta/2)\right)\leq L^{-1}(nt)-L^{-1}(ns)\leq L^{-1}(nt).\]
Hence, arguing similarly to \eqref{ggg}, on $E_n^{\delta,\eta,K}$, for $n$ large, we have that
\[\frac{nL^{-1}(nt)-nL^{-1}(ns)}{\tilde{t}^k_{\mathrm{mix}}}\geq  e^{c\log^{\alpha-1} (n(t-\eta/2))}.\]
Thus, proceeding as for the case when $X_0=0$, we find that, uniformly over starting points $\omega_i\in [nm_{n,-}^{-1}(t),nm_{n,+}^{-1}(t)]$,
\begin{eqnarray*}
\lefteqn{\left|{E}^{\alpha,\lambda/n}_\omega\left(f\left(n^{-1}{X}_{nL^{-1}(nt)-nL^{-1}(ns)}\right)\:\vline\:X_0=\omega_i\right)-\frac{\int_{m_{n,-}^{-1}(t)}^{m_{n,+}^{-1}(t)}e^{2\lambda x}f(x)dx}{\int_{m_{n,-}^{-1}(t)}^{m_{n,+}^{-1}(t)}e^{2\lambda x}dx}\right|\times\mathbf{1}_{E_n^{\delta,\eta,K}}(\omega)}\\
&\leq &\|f\|_\infty\sup_{\omega_j\in [nm_{n,-}^{-1}(t),nm_{n,+}^{-1}(t)]}{P}_k\left(S_{G-1}<\frac{ng_k^+}{\ell_1(n)^{\B}}\:\vline\:X_0=\omega_j\right)\times\mathbf{1}_{E_n^{\delta,\eta,K}}(\omega)\\
&&\qquad\qquad\qquad\qquad\qquad\qquad\qquad\qquad\qquad\qquad+\|f\|_\infty c n\ell_1(n)^2 e^{\log^{\alpha+1}n-Ce^{c\log^{\alpha-1} (nt)}}.
\end{eqnarray*}
Since $\omega\cap[nm_{n,-}^{-1}(t),nm_{n,+}^{-1}(t)]=\tilde{V}_k$, the probability involving $S_{G-1}$ can be bounded in almost the same way as \eqref{eq:SonA}. The only adaptation needed in the proof concerns the estimate for $\tau(0)$, as presented at \eqref{taubound}. Specifically, on $E_n^{\delta,\eta,K}$, similarly to the steps leading to \eqref{taubound}, using Lemma~\ref{lem:R_bounds} we can check that:
\begin{align*}
\sup_{\omega_j\in [nm_{n,-}^{-1}(t),nm_{n,+}^{-1}(t)]}\Pk\left(\tau(0)=\infty\:\vline\:X_0=\omega_j\right)&\leq
\sup_{\omega_j\in\tilde{V}_k}\frac{{R}^k_\mathrm{eff}(\omega_j,\omega_{a_k^+})}{{R}^k_\mathrm{eff}(\omega_j,\{\omega_{\bt{k}},\omega_{{b}_k^+}\})}\\
&\leq \frac{C\ell_1(n)^6\left(g_{k-1}^++\gtt{k}\right)}{g_k^+}
\end{align*}
and the right-hand side here can be made arbitrarily small by taking $n$ large. On the other hand, if $\tau(0)<\infty$, then we can use the strong Markov property at $\tau(0)$ and Remark~\ref{rem:SonA}. Thus we can conclude that
\[\sup_{\omega_j\in [nm_{n,-}^{-1}(t),nm_{n,+}^{-1}(t)]}{P}_k\left(S_{G-1}<\frac{ng_k^+}{\ell_1(n)^{\B}}\:\vline\:X_0=\omega_j\right)\times\mathbf{1}_{E_n^{\delta,\eta,K}}(\omega)\xrightarrow{n\to\infty} 0\]
in $\mathbf{P}$-probability. Summarising, this establishes that the supremum over starting points $\omega_i\in [nm_{n,-}^{-1}(t),nm_{n,+}^{-1}(t)]$ of
\begin{equation}\label{summary}
  \left|{E}^{\alpha,\lambda/n}_\omega\left(f\left(n^{-1}{X}_{nL^{-1}(nt)-nL^{-1}(ns)}\right)\:\vline\:X_0=\omega_i\right)-\frac{\int_{m_{n,-}^{-1}(t)}^{m_{n,+}^{-1}(t)}e^{2\lambda x}f(x)dx}{\int_{m_{n,-}^{-1}(t)}^{m_{n,+}^{-1}(t)}e^{2\lambda x}dx}\right|\times\mathbf{1}_{E_n^{\delta,\eta,K}}(\omega)
\end{equation}
converges to zero in $\mathbf{P}$-probability as $n\to\infty$.

Finally, observe that by symmetry, we have that \eqref{summary} still converges to zero in $\mathbf{P}$-probability when ${E}_n^{\delta,\eta,K}$ is replaced by $\tilde{E}_n^{\delta,\eta,K}$. By Proposition~\ref{prop:Ewhp}, we know that $\liminf_{n\rightarrow\infty} \mathbf{P}({E}_n^{\delta,\eta,K}\cup \tilde{E}_n^{\delta,\eta,K})$ can be made arbitrarily close to one by adjusting $\delta$, $K$ and $\eta$. This completes the proof.
\end{proof}

\begin{rem}\label{locremproof} The above proof contains the main ideas needed to check the localisation result of Remark~\ref{locrem}. Indeed, the argument established that
\begin{align*}
\lefteqn{{P}^{\alpha,\lambda/n}_\omega\left(n^{-1}\underline{X}_{nL^{-1}(n t)}<m_{n,-}^{-1}(t)\mbox{ or }n^{-1}\overline{X}_{nL^{-1}(n t)}>m_{n,+}^{-1}(t)\right)\times\mathbf{1}_{E_n^{\delta,\eta,K}}(\omega)}\\
&\leq{P}^{\alpha,\lambda/n}_\omega\left(\min\{\betat{k},{\beta}_k^+\}\leq nL^{-1}(nt)\right)\times\mathbf{1}_{E_n^{\delta,\eta,K}}(\omega),\hspace{120pt}\\
&\rightarrow 0
\end{align*}
in $\mathbf{P}$-probability, where $k$ is the index satisfying \eqref{ineq1}. The symmetry of the situation means we can deduce a similar result on $\tilde{E}_n^{\delta,\eta,K}$, and applying Proposition~\ref{prop:Ewhp} thus yields
\[{P}^{\alpha,\lambda/n}_\omega\left(n^{-1}\underline{X}_{nL^{-1}(n t)}<m_{n,-}^{-1}(t)\mbox{ or }n^{-1}\overline{X}_{nL^{-1}(n t)}>m_{n,+}^{-1}(t)\right)\rightarrow 0\]
in $\mathbf{P}$-probability. We further have that
\begin{align*}
\lefteqn{{P}^{\alpha,\lambda/n}_\omega\left(n^{-1}\underline{X}_{nL^{-1}(n t)}>m_{n,-}^{-1}(t)\mbox{ or }n^{-1}\overline{X}_{nL^{-1}(n t)}<m_{n,+}^{-1}(t)\right)\times\mathbf{1}_{E_n^{\delta,\eta,K}}(\omega)}\\
&\leq{P}^{\alpha,\lambda/n}_\omega\left(\max\{\alphat{k},{\alpha}_k^+\}> nL^{-1}(nt)\right)\times\mathbf{1}_{E_n^{\delta,\eta,K}}(\omega),\hspace{120pt}
\end{align*}
where $\alphat{k}=\inf\{t\geq 0:\:X_t\leq \omega_{\at{k}}\}$. Applying \eqref{ineq1} and Theorem~\ref{thm:annealed}, one can check that
\[{P}^{\alpha,\lambda/n}_\omega\left({\alpha}_k^+> nL^{-1}(nt)\right)\times\mathbf{1}_{E_n^{\delta,\eta,K}}(\omega)\rightarrow 0\]
in $\mathbf{P}$-probability. The same approach gives the corresponding limit with ${\alpha}_k^+$ replaced by $\alphat{k}$, and we can also deal with the situation on $\tilde{E}_n^{\delta,\eta,K}$ in a similar fashion. Hence, again applying Proposition~\ref{prop:Ewhp}, we find that
\[{P}^{\alpha,\lambda/n}_\omega\left(n^{-1}\underline{X}_{nL^{-1}(n t)}>m_{n,-}^{-1}(t)\mbox{ or }n^{-1}\overline{X}_{nL^{-1}(n t)}<m_{n,+}^{-1}(t)\right)\rightarrow 0\]
in $\mathbf{P}$-probability, which completes the proof of the result claimed in Remark~\ref{locrem}.
\end{rem}

\begin{proof}[Proof of Theorem~\ref{thm:mainnew3}(a)] To deduce the finite-dimensional statement of Theorem~\ref{thm:mainnew3}(a) from Proposition~\ref{onedm}, we will repeatedly apply the Markov property. Specifically, for the first step, observe
\begin{align*}
\lefteqn{E^{\alpha,\lambda/n}_\omega\left(\prod_{i=1}^If_i\left(n^{-1}{X}_{nL^{-1}(nt_i)}\right)\right)}\\
&=E^{\alpha,\lambda/n}_\omega\left(\prod_{i=1}^{I-1}f_i\left(n^{-1}{X}_{nL^{-1}(nt_i)}\right)
E^{\alpha,\lambda/n}_\omega\left(f_I\left(n^{-1}{X}_{nL^{-1}(nt_I)}\right)\:\vline\: X_{nL^{-1}(nt_{I-1})}\right)\right).
\end{align*}
Hence, defining
\[\mathcal{I}_{n,i}:=\frac{\int_{m_{n,-}^{-1}(t_i)}^{m_{n,+}^{-1}(t_i)}e^{2\lambda x}f_i(x)dx}{\int_{m_{n,-}^{-1}(t_i)}^{m_{n,+}^{-1}(t_i)}e^{2\lambda x}dx},\]
we have
\begin{align*}
\lefteqn{\left|E^{\alpha,\lambda/n}_\omega\left(\prod_{i=1}^If_i\left(n^{-1}{X}_{nL^{-1}(nt_i)}\right)\right)-E^{\alpha,\lambda/n}_\omega\left(\prod_{i=1}^{I-1}f_i\left(n^{-1}{X}_{nL^{-1}(nt_i)}\right)\right)
\mathcal{I}_{n,I}\right|}\\
&\leq C E^{\alpha,\lambda/n}_\omega\left(\left|E^{\alpha,\lambda/n}_\omega\left(f_I\left(n^{-1}{X}_{nL^{-1}(nt_I)}\right)\:\vline\: X_{nL^{-1}(nt_{I-1})}\right)-\mathcal{I}_{n,I}\right|\right)\\
&\leq C\sup_{\omega_i\in [m_{n,-}^{-1}(t_{I-1}),m_{n,+}^{-1}(t_{I-1})]}\left|E^{\alpha,\lambda/n}_\omega\left(f_I\left(n^{-1}{X}_{nL^{-1}(nt_I)-nL^{-1}(nt_{I-1})}\right)\:\vline\: X_0=\omega_i\right)-\mathcal{I}_{n,I}\right|\\
&\hspace{80pt}+C P^{\alpha,\lambda/n}_\omega\left(X_{nL^{-1}(nt_{I-1})}\not\in [m_{n,-}^{-1}(t_{I-1}),m_{n,+}^{-1}(t_{I-1})]\right).
\end{align*}
Since $[m_{n,-}^{-1}(t_{I-1}),m_{n,+}^{-1}(t_{I-1})]\subseteq [m_{n,-}^{-1}(t_{I}),m_{n,+}^{-1}(t_{I})]$, the first of these terms converges to zero in $\mathbf{P}$-probability by Proposition~\ref{onedm}. To handle the second term, we consider $E_n^{\delta,\eta,K}$ as in the proof of Proposition~\ref{onedm} with $t=t_{I-1}$. Recalling \eqref{ddddd} and the definition of $S_{G-1}$ from Subsection~\ref{sec33}, we have that
\begin{align*}
\lefteqn{P^{\alpha,\lambda/n}_\omega\left(X_{nL^{-1}(nt_{I-1})}\not\in [m_{n,-}^{-1}(t_{I-1}),m_{n,+}^{-1}(t_{I-1})]\right)\times\mathbf{1}_{E_n^{\delta,\eta,K}}(\omega)}\\
&\leq {P}^{\alpha,\lambda/n}_\omega\left(\min\{\betat{k},{\beta}_k^+\}\leq nL^{-1}(nt_{I-1})\right)\times\mathbf{1}_{A_n^{\delta,K}}(\omega)\hspace{100pt}\\
&\leq {P}_k\left(S_{G-1}<\frac{ng_k^+}{\ell_1(n)^{\B}}\right)\times\mathbf{1}_{A_n^{\delta,K}}(\omega),
\end{align*}
where we apply the bound at \eqref{lntbound} to obtain the second inequality (with $k$ being the index satisfying \eqref{ineq1}). Moreover, by \eqref{eq:SonA}, the upper bound here converges to zero in $\mathbf{P}$-probability. A similar limit holds when $E_n^{\delta,\eta,K}$ is replaced by $\tilde{E}_n^{\delta,\eta,K}$ (again with $t=t_{I-1}$). Since $\liminf_{n\rightarrow\infty}\mathbf{P}(E_n^{\delta,\eta,K}\cup\tilde{E}_n^{\delta,\eta,K})$ can be made arbitrarily close to 1 by adjusting $\delta$, $K$ and $\eta$ by Proposition~\ref{prop:Ewhp}, we can conclude that
\[\left|E^{\alpha,\lambda/n}_\omega\left(\prod_{i=1}^If_i\left(n^{-1}{X}_{nL^{-1}(nt_i)}\right)\right)-E^{\alpha,\lambda/n}_\omega\left(\prod_{i=1}^{I-1}f_i\left(n^{-1}{X}_{nL^{-1}(nt_i)}\right)\right)
\mathcal{I}_{n,I}\right|\rightarrow 0\]
in $\mathbf{P}$-probability. Iterating the argument gives the result.
\end{proof}

\begin{proof}[Proof of Theorem~\ref{thm:mainnew3}(b)] Given Theorem~\ref{thm:mainnew3}(a), to prove the result, it will suffice to check that: for any collection of times satisfying $0<t_1<\dots<t_k$ and continuous bounded functions $f_1,\dots,f_k$,
\[\mathbf{E}\left(\prod_{i=1}^k\frac{\int_{m_{n,-}^{-1}(t_i)}^{m_{n,+}^{-1}(t_i)}e^{2\lambda x}f_i(x)dx}{\int_{m_{n,-}^{-1}(t_i)}^{m_{n,+}^{-1}(t_i)}e^{2\lambda x}dx}\right)\rightarrow\mathbf{E}\left(\prod_{i=1}^k\frac{\int_{m_{-}^{-1}(t_i)}^{m_{+}^{-1}(t_i)}e^{2\lambda x}f_i(x)dx}{\int_{m_{-}^{-1}(t_i)}^{m_{+}^{-1}(t_i)}e^{2\lambda x}dx}\right)\]
as $n\rightarrow\infty$. Since the ratios of integrals within the expectations are continuous bounded functions of their limits, this convergence follows from the fact that
\[\left(m_{n,-}^{-1}(t_i),m_{n,+}^{-1}(t_i)\right)_{i=1}^k\rightarrow\left(m_{-}^{-1}(t_i),m_{+}^{-1}(t_i)\right)_{i=1}^k\]
in distribution as $n\rightarrow\infty$. This is in turn a consequence of Theorem~\ref{thm:mainnew2}(b) and the observation that, for every $t\geq 0$, both $m_-^{-1}$ and $m_+^{-1}$ are $\mathbf{P}$-a.s.\ continuous at $t$, which follows from \cite[Proposition 4.9]{Res87}, for example.
\end{proof}

\appendix

\section{Metrics on the space of \cadlag functions}
\label{metricapp}

For the convenience of readers, we recall here some standard notions of distance on the space of \cadlag functions. For a more detailed introduction, see \cite[Chapters 3 and 12]{Whitt}. Write $D([0,\infty),\mathbb{R})$ for the space of \cadlag functions from $[0,\infty)$ into $\mathbb{R}$. For $f,g\in D([0,\infty),\mathbb{R})$, we define
\begin{equation}\label{dudef}
d_U(f,g):=\int_{0}^\infty e^{-T} \min\left\{1, d^T_U(f,g)\right\}dT,
\end{equation}
where
\[d^T_U(f,g):=\sup_{t\in[0,T]}\left|f(t)-g(t)\right|.\]
This is a metric on $D([0,\infty),\mathbb{R})$ that characterises the topology of uniform convergence on compacts. To capture the Skorohod $J_1$ topology, we define $d_{J_1}(f,g)$ similarly to \eqref{dudef}, but with $d^T_U(f,g)$ replaced by
\[d^T_{J_1}(f,g):=\inf_{\lambda}\left(d^T_U(f\circ\lambda,g)+\sup_{t\in[0,T]}\left|\lambda(t)-t\right|\right),\]
where the infimum is over increasing homeomorphisms $\lambda:[0,T]\mapsto[0,T]$. For the Skorohod $M_1$ topology, we need to introduce notation for the completed (partial) graph of $f\in D([0,\infty),\mathbb{R})$. In particular, we set
\[\Gamma^T_f:=\left\{(t,x)\in[0,T]\times \mathbb{R}:\:x\in[\min\{f(t^-),f(t)\},\max\{f(t^-),f(t)\}]\right\}.\]
Moreover, we define a parametric representation of $\Gamma^T_f$ to be a continuous surjection $(u,v):[0,1]\mapsto \Gamma^T_f$ such that, if $s<t$, then either $u(s)<u(t)$ or $u(s)=u(t)$ and $|f(u(s)^-)-v(s)|\leq |f(u(t)^-)-v(t)|$. We then define $d_{M_1}(f,g)$ similarly to \eqref{dudef}, but with $d^T_U(f,g)$ replaced by
\[d^T_{M_1}(f,g):=\inf_{(u_f,v_f),(u_g,v_g)}\left(\sup_{t\in[0,1]}\left|u_f(t)-u_g(t)\right|+\sup_{t\in[0,1]}\left|v_f(t)-v_g(t)\right|\right),\]
where the infimum is over parametric representations $(u_f,v_f)$ of $\Gamma^T_f$ and $(u_g,v_g)$ of $\Gamma^T_g$. Finally, in the course of the article, we sometimes use the same notation to discuss distances between functions in $D([0,\infty),\mathbb{R}^d)$. If $f,g\in D([0,\infty),\mathbb{R}^d)$, then our convention is that
\[d_{U}(f,g):=\sum_{i=1}^dd_{U}(f_i,g_i),\]
where $f_i$ and $g_i$ are the components of $f$ and $g$, respectively, i.e.\ $f(t)=(f_1(t),\dots,f_d(t))$ and $g(t)=(g_1(t),\dots,g_d(t))$. We similarly define $d_{J_1}$ and $d_{M_1}$ on $D([0,\infty),\mathbb{R}^d)$.

\section{Mixing time estimate}\label{app:mixingt}

We prove the bound at \eqref{mixingt} from the proof of Theorem~\ref{thm:mainnew3}. As we shall see below, this is essentially a bound on a mixing time and a similar result is proved in \cite[Lemma 4.1, Corollary 4.2]{NP08} for discrete time random walks. The proof in that paper can be adapted to our continuous time random walks, but we provide a slightly different proof for readers' convenience. Our proof will be given in a general setting since it simplifies the notation and also the argument might be of general interest.

Let $G$ be a finite connected graph (with no loops or multiple edges) and suppose that for each edge $\{x,y\}$ in $G$, a nonnegative weight $c_{x,y}=c_{y,x}> 0$ is assigned. We consider the continuous random walk $((Y_t)_{t\ge 0}, \{P_x\}_{x\in G})$ whose generator $\Delta$ is defined in the same way as~\eqref{generator}. The invariant measure $\mu$ and the effective resistance $R_{\text{eff}}$ are also defined in the same way as~\eqref{eq:def_invariant} and~\eqref{eq:def_Reff}. Let $\pi=\mu(\cdot)/\mu(G)$ denote the invariant distribution. Note first that we have, for any $x\in G$,
\[\left|E_x(f(Y_t))-\int f \dd \pi\right| \le \sum_{y\in G} |f(y)| |P_x(Y(t)=y)-\pi(y)|\le 2\|f\|_\infty d_{\text{TV}}(P_x(Y(t)\in\cdot),\pi),\]
where we write $d_{\text{TV}}$ for the total variation distance between measures on $G$. Thus it suffices to obtain the following bound on the total variation distance
\begin{align}
  \label{eq:TVgoal}
  \sup_{x\in G}d_{\text{TV}}(P_x(Y(t)\in\cdot),\pi)
  \le \frac{1}{2}\exp^{-t/(\mu(G)\text{diam}_{R_{\text{eff}}}(G))}\sup_{x\in G}\pi(x)^{-1/2}.
\end{align}
By Jensen's inequality, we can bound the total variation distance as follows:
\begin{align}
  d_{\text{TV}}(P_x(Y(t)\in\cdot),\pi)
  &=\frac{1}{2}\|\pi(\cdot)^{-1}P_x(Y(t)\in\cdot)-1\|_{L^1(\pi)}\nonumber\\
  &\le \frac{1}{2\mu(G)^{1/2}}\|\pi(\cdot)^{-1}P_x(Y(t)\in\cdot)-1\|_{L^2(\mu)}.\label{eq:L^1-L^2}
\end{align}
Furthermore, we know from~\cite[Corollary 5.9]{Barbook} that
\begin{equation}
  \label{eq:L^2-dist}
  \begin{split}
  \|\pi(\cdot)^{-1}P_x(Y(t)\in\cdot)-1\|_{L^2(\mu)} &\le e^{-t \lambda_2(G)} \|\pi(\cdot)^{-1}\mathbf{1}_{\{x\}}-1\|_{L^2(\mu)}\\
  &= \mu(G)^{1/2}e^{-t\lambda_2(G)}\left(\frac{1}{\pi(x)}-1\right)^{1/2},
  \end{split}
\end{equation}
for any $t\ge 0$, where
\begin{align*}
  \lambda_2(G)=\inf\left\{\mathcal{E}(f,f)\colon \|f\|_{L^2(\mu)}=1 \text{ and }\sum_{x\in G}f(x)\mu(x) =0\right\}
\end{align*}
is the so-called spectral gap. The operator $\mathcal E$ was defined in \eqref{eq:def_E} for the weighted graph corresponding to the Mott random walk, and the definition for arbitrary weighted graphs is analogous. We note that $\lambda_2(G)$ is the second smallest eigenvalue of $-\Delta$ and the above variational expression can be found in~\cite[Lemma 3.36]{Barbook}. 

Now we are going to relate $\lambda_2(G)$ to $\mu(G)\text{diam}_{R_{\text{eff}}}(G)$. To this end we use the variational representation of the effective resistance:
\begin{align}
  \label{eq:Reff-var}
  R_{\text{eff}}(x,y)=\sup\left\{\frac{|f(x)-f(y)|^2}{\mathcal{E}(f,f)}\colon \mathcal{E}(f,f)>0\right\},
\end{align}
which follows readily from \eqref{eq:def_Reff}. Let $\phi$ be an $L^2(\mu)$-normalized eigenfunction of $-\Delta$ associated with $\lambda_2(G)$. Then, since $\mathcal{E}(\phi,\phi)=\lambda_2(G)$, we can use \eqref{eq:Reff-var} to get
\begin{align}
  \label{eq:ReffandLambda}
  R_{\text{eff}}(x,y)\ge \frac{|\phi(x)-\phi(y)|^2}{\lambda_2(G)}.
\end{align}
From $\mu(G)\max_{x\in G}|\phi(x)|^2\ge\|\phi\|_{L^2(\mu)}^2=1$, it follows that there exists $x^*\in G$ such that $|\phi(x^*)|\ge \mu(G)^{-1/2}$. On the other hand, the condition $\sum_{x\in G} \phi(x)\mu(x)=0$ for the eigenfunction $\phi$ implies that there exists $y^*\in G$ with $\phi(x^*)\phi(y^*)<0$. Then we have $|\phi(x^*)-\phi(y^*)|\ge \mu(G)^{-1/2}$. Substituting this into \eqref{eq:ReffandLambda} and rearranging, we get
\begin{align*}
   \lambda_2(G)\ge \frac{1}{\mu(G) R_{\text{eff}}(x^*,y^*)} \ge \frac{1}{\mu(G)\text{diam}_{R_{\text{eff}}}(G)}.
\end{align*}
Combining this bound with~\eqref{eq:L^1-L^2} and~\eqref{eq:L^2-dist}, we obtain \eqref{eq:TVgoal}.

\section*{Acknowledgments}
This research was supported by JSPS Grant-in-Aid for Scientific Research (A) 17H01093, JSPS Grant-in-Aid for Scientific Research (C) 19K03540, JSPS Grant-in-Aid for Scientific Research (C) 21K03286, Grant-in-Aid for JSPS Fellows 19F19814, and the Research Institute for Mathematical Sciences, an International Joint Usage/Research Center located in Kyoto University.

\bibliographystyle{amsplain}
\bibliography{next_mott}

\end{document}